\newtheorem{theorem}{Theorem}[section]
\newtheorem{corollary}[theorem]{Corollary}
\newtheorem{lemma}[theorem]{Lemma}
\newtheorem{proposition}[theorem]{Proposition}
\newtheorem{definition}[theorem]{Definition}
\newtheorem{remark}[theorem]{Remark}
\newtheorem{example}[theorem]{Example}
\numberwithin{equation}{section}
\newcommand{\mycomment}[1]{}
\newcommand{\ZZ}{\mathbb Z}
\newcommand{\CC}{\mathbb C}
\newcommand{\PP}{\mathbb P}
\newcommand{\maxideal}{\mathfrak{m}}
\renewcommand{\AA}{\mathbb{A}}
\DeclareMathOperator{\im}{Im}
\DeclareMathOperator{\soc}{Soc}
\DeclareMathOperator{\CH}{CH}
\DeclareMathOperator{\Pic}{Pic}
\DeclareMathOperator{\grad}{grad}
\DeclareMathOperator{\Sym}{Sym}
\DeclareMathOperator{\adj}{adj}
\DeclareMathOperator{\Hilb}{Hilb}
\providecommand{\keywords}[1]
{
  	
  \textbf{\textit{Keywords:}} #1
}
\providecommand{\MSC}[1]
{
	
  \textbf{\textit{MSC Classes:}} #1
}
\title{Characteristic numbers of algebras}
\author{Jakub Jagiełła, Paweł Pielasa, Anatoli Shatsila}
\date{}
\begin{document}

\maketitle

\begin{abstract}
    We introduce \emph{characteristic numbers} of a finite commutative unital $\CC$-algebra, which are numerical invariants arising from algebraic intersection theory. We characterize Gorenstein and local complete intersection algebras in terms of their characteristic numbers. We compute characteristic numbers for certain families of algebras. We show that characteristic numbers are constant on $\Hilb_d(\AA^1)$, provide an explicit upper bound for characteristic numbers on the smoothable component of $\Hilb_d(\AA^n)$ and an explicit lower bound for characteristic numbers on the Gorenstein locus of $\Hilb_d(\AA^n)$ for $n \geq d-2$.
\end{abstract}


\keywords{Gorenstein algebra, local complete intersection, algebraic intersection theory, variety of complete quadrics, tensor, characteristic number, Hilbert scheme of points}

\MSC{14C05, 14C17, 13H10, 13D10, 14E05}

\section{Introduction}
Finite algebras are ubiquitous in algebraic geometry and related fields, especially when studying moduli spaces via deformation theory, Hilbert schemes of points \cite{MR1839919}, secant varieties \cite{MR3092255, MR3121848}, algebraic topology \cite{MR3945742, MR4513165} and complexity theory \cite{landsberg_complexity, MR4432327}. The theory continues to develop; see \cite{MR4802586} for a modern survey of open problems. \\

Algebraic intersection theory has found significant applications in various areas of mathematics, particularly in the study of chromatic polynomials \cite{Huh1, Huh2, Huh3}. In his groundbreaking paper \cite{Huh1}, June Huh demonstrated that the coefficients of the reduced chromatic polynomial of a realizable matroid correspond (up to a sign) to certain intersection numbers of a subvariety associated with the matroid inside the permutohedral variety. This result was later extended to matroids realizable over a field of arbitrary characteristic, see \cite{Huh2}. The key insight came from identifying these coefficients with certain intersection numbers in toric geometry. This geometric perspective implies that the coefficients form a log-concave sequence. The log-concavity result was later generalized to non-realizable matroids in \cite{Huh3}, where Karim Adiprasito, June Huh, and Eric Katz showed, using combinatorial methods, that matroidal Chow rings are endowed with a Kähler package. \\

These geometric ideas were key ingredients of the framework developed in \cite{dinu2021applicationsintersectiontheorymaximum}. The aim of this paper is to use algebraic intersection theory to define new numerical invariants of finite algebras. Then, we characterize certain classes of algebras in terms of their characteristic numbers and calculate them for several families of examples. Moreover, we interpret some of these results in the language of Hilbert schemes of points.

\subsection{Characteristic numbers}
The characteristic numbers of algebras are defined using intersection theory on the variety of complete quadrics. They can be thought of as a generalization of characteristic numbers of matroids introduced by June Huh, giving a connection between geometry and combinatorics. In this section, we recall the historical context and applications of intersection theory on complete quadrics, introduce our notion of characteristic numbers, summarize some of the ideas present in the work of June Huh \cite{Huh1} and compare them with our setup.\\

Let $V$ be an $n$-dimensional $\CC$-vector space. The \emph{variety of complete quadrics} $\mathcal{CQ}(V)$ is the closure of the image of the rational map
\[
    \Phi_{\mathcal{Q}} \colon \mathbb{P}(S^2V) \dashrightarrow \mathbb{P}(S^2V) \times \mathbb{P}\left(S^2\left(\bigwedge^2 V\right)\right) \times \ldots \times \mathbb{P}\left(S^2\left(\bigwedge^{n-1} V\right)\right),
\]
sending the projectivization of $\varphi \in S^2 V \subset V \otimes V = \operatorname{Hom}(V^*, V)$ to 
the product of projectivizations of $\bigwedge^i \varphi \in S^2 (\bigwedge^i V) \subset  \bigwedge^i V \otimes \bigwedge^i V = \operatorname{Hom}(\bigwedge^i V^*, \bigwedge^i V)$.  
In coordinates, on the $i$-th factor, the map $\Phi$ sends a symmetric matrix to its $(i \times i)$-minors.
Alternatively, $\mathcal{CQ}(V)$ can be described as the Chow quotient of the Lagrangian Grassmannian via the $\CC^*$-action~\cite{K, Thaddeus}. We describe the construction in more detail in Section \ref{2.2}.

Let $L \subset S^2V$ be a $(d+1)$-dimensional linear subspace, containing a matrix of rank $n-1$ 
(so that $\Phi|_L$ is well-defined). The closure of the image of $\Phi|_{L}$ defines a $d$-dimensional 
subvariety $X_L$ of complete quadrics $\mathcal{CQ}(V)$.
For any tuple $(b_1, \dots, b_{n-1})$ of non-negative integers summing up to $d$, one can consider the intersection of $X_L$ with $b_i$ general hyperplanes on $\mathbb{P}(\bigwedge^{i}V \otimes \bigwedge^{i}V)$. By Bertini's theorem, it is a zero-dimensional variety, that is, a finite number of points. We denote its degree by $c^L_{b_1,\ldots,b_{n-1}}$ and call it the \emph{characteristic number} of $L$ indexed by $(b_1, \dots, b_{n-1}$). Alternatively, it can be interpreted as the product in the Chow ring of $\mathcal{CQ}(V)$:
\[
    c^L_{b_1, \dots, b_{n-1}} = [X_L] \cdot L_1^{b_1} \cdot \dots \cdot L_{n-1}^{b_{n-1}} \in \CH^n(\mathcal{CQ}(V)) \cong \ZZ,
\]
where $L_i$ are classes of pullbacks of hyperplanes in $\PP(S^2 (\bigwedge^i V))$.
This definition can be viewed a special case of the definition of characteristic numbers of a tensor in \cite[Definition 3.2]{CM}, as $L \subset S^2V$ can 
be identified with a subspace of partially symmetric tensors in $\CC^{d+1} \otimes S^2V \subset \CC^{d+1} \otimes V \otimes V$.

Consider a finite commutative unital $\CC$-algebra $A$ of dimension $n$. The multiplication 
$A \times A \to A$ is a symmetric billinear map corresponding to a tensor in $S^2 A^* \otimes A$. 
The image of the flattening of this tensor with respect to $A$ is an $n$-dimensional subspace of 
$S^2 A^*$. \emph{Characteristic numbers} of the algebra $A$ are the characteristic numbers of this 
subspace. We denote them by $c^A_{b_1, \dots, b_{n-1}}$. \\

The space of complete quadrics $\mathcal{CQ}(V)$ is a compactification of the space of non-degenerate quadrics in $\mathbb{P}^n$, structured to include degenerate cases. Nearly 150 years ago, Schubert published his influential work \cite{Schubert2} on quadrics satisfying various tangency conditions, such as passing through fixed points or being tangent to hyperplanes. Schubert calculus in this setting involves computing characteristic numbers that count such quadrics. The Schubert calculus for complete quadrics has since become a classical topic, with many remarkable results \cite{Semple1, Semple2, Tyrrell, Vainsencher}.

In the modern setting, questions in the enumerative geometry of quadrics can be addressed by computing characteristic numbers in the cohomology ring of $\mathcal{CQ}(V)$. Specifically, this cohomology ring has been described in terms of generators and relations, and algorithms have been developed to compute characteristic numbers \cite{CPI, CPII, LLT}.

The Schubert calculus for complete quadrics is closely linked \cite{MMW} to the maximum likelihood degree (ML-degree) of linear concentration models \cite{ML} and the algebraic degree of semidefinite programming \cite{DSP}. The ML-degree is a fundamental invariant in algebraic statistics that quantifies the complexity of a statistical model. It can be interpreted as counting the number of critical points of a likelihood function, which is of interest in statistics since the maximum likelihood estimate is one of them. 
The key connection arises in the characteristic number $c^L_{0,\ldots,0,d}$, which counts the 
number of non-degenerate quadrics in $n$ variables that pass through $n(n+1)/2 - (d+1)$ general points
and are tangent to $d$ general hyperplanes. This quantity coincides with the ML-degree $\phi(n, d+1)$ 
of the general linear concentration model associated with a $(d+1)$-dimensional subspace
$L \subset S^2V$ \cite[Proposition 2.4.1]{dinu2021applicationsintersectiontheorymaximum}.  
This insight has led to proofs of several significant conjectures in algebraic statistics, including the polynomiality of $\phi(n,d)$ in $n$ \cite{MMMSV} and an explicit formula for the ML-degree of a Gaussian graphical model defined by a cycle \cite{ADMV}. \\

Let us describe the setup used by June Huh \cite{Huh3, Huh1, ALCO_2024__7_5_1479_0} and explain the connection to our work.
Let $V$ be an $(n+1)$-dimensional $\CC$-vector space. The \emph{permutohedral variety} $\Pi(V)$ is the closure of the image of the rational map 
$$
    \nabla F: \mathbb{P}(V) \dashrightarrow \mathbb{P}(V) \times \mathbb{P}\left(\bigwedge^2V\right) \times \ldots \times \mathbb{P}\left(\bigwedge^nV\right)
$$ 
given by 
$$
    [x_0:\ldots:x_n] \mapsto ([x_0:\ldots:x_n], [x_0x_1:x_0x_2:\ldots:x_{n-1}x_n],\ldots,[x_0\ldots x_{n-1}: \ldots : x_1\ldots x_n]).
$$ 
For a linear subspace $L \subset V$, the closure of the image of $\nabla F|_L$ defines a subvariety $W_L \subseteq \Pi(V)$ of the permutohedral variety.
In the original context, the subspace $L$ is associated to a realizable matroid $M$ and the 
corresponding subvariety $X_M$ is the wonderful compactification of the complement of the hyperplane arrangement associated to $M$. 
Defining characteristic numbers $c^L_{b_1, \dots, b_{n-1}}$ as above, and considering the characteristic sequence $c^L_{n, \dots, 0}, c^L_{n-1, \dots, 1},\ldots, c^L_{0, \dots, n}$ (which corresponds to the projective degree of the Cremona map), it is a remarkable result \cite{Huh1} that the characteristic sequence is (up to a sign) the sequence of coefficients of the reduced characteristic polynomial of $M$. This geometric insight played a key role in his proof of the long-standing Rota-Heron-Welsh conjecture, which asserts that these coefficients form a log-concave sequence. For a good expository text about this geometrical setting for matroids and the methods used in the proof of log-concavity, see \cite{H3}.

There is another, equivalent definition of the characteristic sequence. 
Given a subspace $L \subset V$, one can associate to it a subvariety of 
$\PP(V) \times \PP(\bigwedge^n V) = \PP(V) \times \PP(V^*)$ as above. The characteristic sequence of 
$L$ coincides with the sequence of coefficients by $[\PP^i \times \PP^{n-i}]$ in the representation 
of the class of this subvariety in the Chow ring of $\PP(V) \times \PP(V^*)$. Understanding 
these coefficients played a key role in \cite{H2}, where June Huh considers a complement $U$ of a 
hyperplane arrangement in $\CC^r$ and associates to it a subvariety in $\overline{U} \times \PP^n$.
 He proves that the coefficients of $[\PP^{r-i} \times \PP^{n-1-r+i}]$ are the alternating 
 coefficients of the Chern-Schwartz-MacPherson class of the characteristic function of $U$. 
 This allowed, among other things, to relate the Euler characteristic of $U$ to the computation 
 of the chromatic polynomial. In our work we use a similar interpretation in \ref{4.4} to prove explicit 
 formulas for characteristic number of algebras in $\Hilb_d(\mathbb{A}^1)$.

The permutohedral variety is a toric variety given by the Chow quotient of the product $(\PP^1)^n$ by the diagonal 
$\CC^*$-action. It can be described in a very concrete, combinatorial way. Given a projection of one 
polytope onto another, one can consider the associated \textit{fiber polytope}, which is a weighted Minkowski 
sum of fibers of this projection. In~\cite{BS} and~\cite{KSZ}, the authors provide a framework for 
understanding Chow quotients of toric varieties by toric actions in this language -- they showed that 
such a quotient is a toric variety described by an appropriate fiber polytope. In our setting, the 
Chow quotient of $(\PP^1)^n$ corresponds to a fiber polytope arising from projection of the 
$n$-dimensional hypercube $I^n$ onto the ``diagonal'' interval $I$.\\

Our use of the variety of complete quadrics to study invariants of subspaces of matrices can be 
seen as a generalization of the setup of the permutohedral variety used by June Huh 
(see \cite{CM, dinu2021applicationsintersectiontheorymaximum} for more details). One can identify 
$V$ with the subspace of $S^2 V$ consisting of diagonal matrices. Then, the map $\nabla F$ 
(composed with the embedding induced by $\bigwedge^i V \hookrightarrow S^2 \bigwedge^i V$) 
coincides with the restriction of the map $\Phi$ (defining the variety of complete quadrics)
to the subspace consisting of diagonal matrices. Thus, the subvariety of complete quadrics 
corresponding to diagonal matrices coincides with the permuthedral variety, considered as a 
subvariety of the complete quadrics. Note that the space of diagonal matrices can be thought of 
as the flattening of the multiplication tensor in the trivial algebra $\prod_{i=1}^n \CC$, 
so the permutohedral variety is just a special case of our main object of study, that is, 
the varieties associated to more general finite algebras. We believe that further understanding 
the intersection theory on $\mathcal{CQ}(V)$ can lead to interesting invariants defined in analogy 
to the ones in matroid theory.
This analogy can be further extended to the varieties $X_A$ we associate to finite algebras. 
In the original context, one associates a subvariety $W_L \subset \Pi(V) \subset \mathcal{CQ}(V)$ 
to a subspace of the space of diagonal matrices. In our work we associate a subvariety $X_L \subset \mathcal{CQ}(V)$ to a subspace of the space of symmetric matrices. The definition of characteristic numbers in our setting generalizes the notion of characteristic numbers of a linear subspace considered by June Huh. 
The interpretation of the permutohedral variety as the Chow quotient of product of projective lines also has its counterpart, as $\mathcal{CQ}(V)$ is a Chow quotient of Lagrangian Grassmannian. This viewpoint in both cases leads to the interpretation of $\mathbb{C}$-points of the variety as a sequence of broken orbits meeting on fixed components of the toric action (on $(\mathbb{P}
^1)^{n+1}$ or the Lagrangian Grassmannian respectively).


\subsection{Algebraic information encoded in characteristic numbers}

Characteristic numbers encode non-trivial information about the algebra. We obtained a few characterizations of Gorenstein algebras in terms of their characteristic numbers. The most interesting ones are given in the following proposition.
\begin{proposition}[Proposition \ref{p:gorenstein}]
    Let $A$ be a finite algebra. The following conditions are equivalent to $A$ being Gorenstein:
    \begin{enumerate}
        \item the characteristic sequence of $A$ is symmetric,
        \item $c^A_{0, \dots, 0, n-1} = 1$,
        \item $c^A_{0, \dots, 0, n-1} \neq 0$.
    \end{enumerate}
\end{proposition}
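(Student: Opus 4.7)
The plan is to realize $c^A_{0,\ldots,0,n-1}$ as an intersection number coming from the projection to the last factor of $\mathcal{CQ}(V)$. With $V=A^*$ and $L \subset S^2V$ the image of the flattening of the multiplication tensor, the projection
$\pi : X_L \to \PP(S^2 \bigwedge^{n-1} V) \cong \PP(S^2 V^*)$
sends, on the dense open subset of $X_L$ birational to $\PP(L)$, a point $[\phi]$ to $[\adj(\phi)]$. Writing $Y = \overline{\pi(X_L)}$, the projection formula gives
\[
    c^A_{0,\ldots,0,n-1} \;=\; [X_L] \cdot L_{n-1}^{n-1} \;=\; \deg(\pi|_{X_L}) \cdot \deg Y
\]
when $\pi|_{X_L}$ is generically finite, and zero otherwise. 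Everything else is extracted from this formula.

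For the equivalence of (2), (3) and the Gorenstein condition: (2)$\Rightarrow$(3) is trivial, so I would prove (3)$\Rightarrow$Gorenstein and Gorenstein$\Rightarrow$(2). If $A$ is not Gorenstein then $L$ contains no nondegenerate matrix, so $\adj(\phi)$ has rank at most one for every $\phi \in L$ and $\pi$ factors through the kernel map $[\phi] \mapsto [\ker \phi]$ followed by the Veronese embedding. Since $v \in \ker \phi_\lambda$ forces $\lambda|_{vA}=0$, which requires $v$ to be a zero divisor, this map lands in the projectivization of the zero-divisor hypersurface of $A$, a subvariety of $\PP(V^*)$ of dimension at most $n-2$. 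Hence $\dim Y < \dim X_L$ and $c^A_{0,\ldots,0,n-1}=0$. Conversely, if $A$ is Gorenstein, I fix a Gorenstein functional $\lambda_0 \in A^*$ with invertible symmetric matrix $M_0 = \phi_{\lambda_0}$. Every element of $L$ is of the form $\phi_{\lambda_0 \cdot a} = M_0 T_a$ where $T_a$ is the matrix of multiplication by $a \in A$, and a direct computation yields
\[
    L^{-1} \;=\; \overline{\{T_a^{-1} M_0^{-1} : a \in A^\times\}} \;=\; \{T_b M_0^{-1} : b \in A\},
\]
an $n$-dimensional \emph{linear} subspace of $S^2 V^*$; symmetry of $T_b M_0^{-1}$ is forced by the self-adjointness $T_a^T M_0 = M_0 T_a$. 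Thus $Y = \PP(L^{-1})$ is a linear subvariety of degree $1$, the map $X_L \to Y$ is birational (matrix inversion being an involution), and $c^A_{0,\ldots,0,n-1} = 1$.

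For the equivalence of (1) with the Gorenstein condition, the forward direction uses the canonical duality involution of $\mathcal{CQ}(V)$ that reverses the order of the $n-1$ factors and acts as matrix inversion on nondegenerate quadrics. When $A$ is Gorenstein, the isomorphism $M_0 : V \to V^*$ carries $L$ to $L^{-1}$ (by the explicit description above), so the involution maps $X_L$ to itself and induces the symmetry $c^A_{b_1,\ldots,b_{n-1}} = c^A_{b_{n-1},\ldots,b_1}$. The converse is immediate once one notes that $c^A_{n-1,0,\ldots,0} = 1$ always, since the first-factor projection $X_L \to \PP(L) \subset \PP(S^2 V)$ is birational onto a linear subspace of degree one. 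Symmetry therefore forces $c^A_{0,\ldots,0,n-1}=1$, which by the previous step gives the Gorenstein condition.

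The main obstacle is the algebraic identity $L^{-1} = \{T_b M_0^{-1} : b \in A\}$ in the Gorenstein case: although $\adj$ is a polynomial map of degree $n-1$, its image on these special $L$ is a linear subspace. This is a genuine input from commutative algebra reflecting the self-duality $A \cong A^*$, and it drives both the computation $c^A_{0,\ldots,0,n-1}=1$ and the symmetry of the characteristic sequence; the rest of the argument is a transparent translation of this identity into intersection theory on $\mathcal{CQ}(V)$.
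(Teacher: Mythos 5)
Your proof is correct, and while it shares the paper's overall skeleton (interpret $c^A_{0,\dots,0,n-1}$ as the degree of the adjugate map on $\PP(L^A)$, kill the non-Gorenstein case by a dimension count on the image, and recover the converse from the fact that $c^A_{n-1,0,\dots,0}=1$ always), two of your key steps are executed genuinely differently. For the non-Gorenstein direction the paper chooses a basis adapted to the socle filtration of a non-Gorenstein local factor and shows by an explicit minor computation that the first coordinate of the kernel vector of any rank-$(n-1)$ matrix vanishes; your observation that the kernel of a degenerate $\phi_\lambda$ consists of zero divisors, hence lies in the union of the maximal ideals, which is a finite union of hyperplanes of $\PP(A)$, is cleaner and avoids the case analysis over local factors. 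In the Gorenstein case the paper obtains $c^A_{0,\dots,0,n-1}=1$ indirectly, by first proving symmetry of the characteristic sequence (Lemma \ref{l:symmetric}, via the identification $L^A\cong L^{A^*}$ and the fact that the operator space $\{T_a\}$ is closed under inversion) and then using that the sequence starts with $1$; you compute it directly by showing that the closure of the inverse locus is the \emph{linear} space $\PP(\{T_bM_0^{-1}\colon b\in A\})$, so the last projection is birational onto a degree-one variety. Your duality-involution argument moreover yields the stronger symmetry $c^A_{b_1,\dots,b_{n-1}}=c^A_{b_{n-1},\dots,b_1}$ for all indices, whereas the paper's lemma only controls the characteristic sequence; the underlying mechanism ($T_a^{-1}=T_{a^{-1}}$ together with the transport of $L^A$ to its inverse locus by the Gorenstein form $M_0$) is the same in both. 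The only point worth making explicit is the degenerate case in which every matrix of $L^A$ has rank at most $n-2$, so that the last projection is undefined; there the characteristic number is zero by the convention of Remark \ref{r:generalization}, which is consistent with your conclusion.
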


The characteristic sequence for a Gorenstein algebra shares many of the properties of other sequences 
of numerical invariants, such as the Hilbert function or Betti numbers. Nevertheless, there are 
important distinctions. It is known that both Hilbert function \cite{Stanley} and Betti 
numbers \cite{Boji} of Gorenstein graded algebras need not be unimodal. However, characteristic 
numbers of any Gorenstein algebra form a unimodal (in fact, even a log-concave) sequence. This follows directly from the Hodge-Teissier-Khovanskii inequality \cite[Lemma 33]{Huh1}, which says that if $H_1,\ldots, H_n$ are nef divisors on an $n$-dimensional variety $X$, then 
\[
    ([H_1]\cdot [H_1] \cdot [H_{3}] \cdot \ldots \cdot [H_n])([H_2]\cdot [H_2] \cdot [H_{3}] \cdot \ldots \cdot [H_n]) \leq ([H_1]\cdot [H_2] \cdot [H_{3}] \cdot \ldots \cdot [H_n])^2.
\]

Furthermore, we obtained a characterization of local complete intersections:
\begin{proposition}[Proposition \ref{p:ci}]
    Let $A$ be an $n$-dimensional local algebra. Then 
    \[
    		c^A_{0,n-1,0,\dots, 0} \leq 2^{n-1} - n
    \] 
    and the equality holds if and only if $A$ is a (local) complete intersection.
\end{proposition}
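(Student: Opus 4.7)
The plan is to interpret $c^A_{0,n-1,0,\dots,0}$ via Bezout on $\PP(L) \cong \PP^{n-1}$. By definition this characteristic number equals $[X_A] \cdot L_2^{n-1}$, the number of points of $X_A$ lying on $n-1$ generic hyperplanes pulled back from $\PP(S^2 \bigwedge^2 A^*)$. I would rewrite this as the count of intersections of $n-1$ generic quadrics in the linear system defining the rational map $\psi: \PP(L) \dashrightarrow \PP(S^2 \bigwedge^2 A^*)$, $[\varphi] \mapsto [\bigwedge^2 \varphi]$, counted outside the base locus $B$. Bezout will give total intersection degree $2^{n-1}$ in $\PP^{n-1}$, so the characteristic number should equal $2^{n-1}$ minus the local length of the intersection concentrated at $B$, and the task is to show that this local length is at least $n$, with equality exactly in the LCI case.

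The decisive step is to identify $B$ scheme-theoretically. A rank-$\leq 1$ form in $L$ has the shape $(a,b) \mapsto v(a)v(b)$ for some $v \in A^*$, and membership in $L$ (i.e.\ the requirement that $v(a)v(b) = \alpha(ab)$ for some $\alpha \in A^*$) forces $v$ to be a $\CC$-algebra map $A \to \CC$; since $A$ is local, the only such map is the quotient $\pi: A \to A/\maxideal$, so $B$ is set-theoretically the single point $[\pi \otimes \pi]$. Working in an affine chart around this point with coordinates $s_1,\dots,s_{n-1}$ dual to a basis $e_1,\dots,e_{n-1}$ of $\maxideal$, I would compute the $\{0,j\}\times\{0,k\}$ minor of a general element of $L$ and obtain $\sum_i c_{jk}^i s_i - s_j s_k$, where $e_j e_k = \sum_i c_{jk}^i e_i$; the remaining $2\times 2$ minors land in the ideal generated by these. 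Hence $I_B$ is exactly the ideal of relations $s_j s_k = \sum_i c_{jk}^i s_i$ of the multiplication table, which presents $B$ as $\Spec A$ embedded in $\AA^{n-1}$, a zero-dimensional scheme of length $n$.

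With this identification the inequality follows from a standard residual-intersection argument: $n-1$ generic forms $f_1,\dots,f_{n-1}$ from the linear system cut out a scheme of degree $2^{n-1}$ by Bezout, and the local length at $[\pi\otimes\pi]$ satisfies $\dim_\CC \mathcal{O}_{[\pi\otimes\pi]}/(f_1,\dots,f_{n-1}) \geq \dim_\CC \mathcal{O}_{[\pi\otimes\pi]}/I_B = n$, with equality iff the $f_i$ generate $I_B$ locally. By Nakayama, $n-1$ generic minors generate $I_B$ at $[\pi\otimes\pi]$ iff $I_B$ can be generated by $n-1$ elements in the local ring, iff $\Spec A \hookrightarrow \AA^{n-1}$ is a local complete intersection, iff $A$ itself is LCI as an algebra (the LCI property is preserved when passing between presentations with different numbers of redundant variables). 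Combined with Bezout this yields $c^A_{0,n-1,0,\dots,0} \leq 2^{n-1} - n$ with equality exactly when $A$ is LCI. The hardest part will be the explicit identification $B \cong \Spec A$ together with the verification that the linear system of $2\times 2$ minors generates the whole ideal $I_B$ locally (so that the bound is sharp on LCI algebras); a minor additional subtlety is the degenerate case when $\psi$ fails to be generically finite, which happens only for non-Gorenstein and hence non-LCI $A$, where $c^A = 0 < 2^{n-1} - n$ for $n \geq 3$ and the strict inequality is automatic.
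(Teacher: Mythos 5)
Your proposal is correct and follows essentially the same route as the paper: compute $c^A_{0,n-1,0,\dots,0}$ as the Bézout number $2^{n-1}$ minus the excess intersection at the base locus of the system of $(2\times 2)$-minors, identify that base locus scheme-theoretically with $\Spec A$ (so it has length $n$), and characterize equality by whether $n-1$ generic members generate the base ideal locally, i.e.\ whether $A$ is a local complete intersection. The only differences are presentational: you identify $B\cong\Spec A$ by a local-chart computation and justify the equality case directly via Nakayama, where the paper uses its Lemma 3.5 and an external reference, and you correctly flag the degenerate case $\maxideal^2=0$ that the paper also treats separately.
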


In both classes of algebras, there was a particular characteristic number determining a specific algebraic property. We hope that other characteristic numbers may also have a clear algebraic interpretation. 

\subsection{Explicit calculations and relation to the Hilbert schemes of points}

The \emph{Hilbert scheme} of $d$ points on the affine plane $\AA^n$ is a scheme parametrizing zero-dimensional closed subschemes $Z \subset \AA^n$ of length $d$, or equivalently, ideals $I \subset \CC[x_1, \dots, x_n]$ such that $\dim_\CC \CC[x_1, \dots, x_n] / I = d$. Formally, it can be defined as the scheme representing the functor 
\[
\begin{split}
    \Hilb_d(\AA^n) \colon \mathbf{Alg}_\CC &\to \mathbf{Set} \\
    A &\mapsto \{ I \subset A[x_1, \dots, x_n] \colon A[x_1, \dots, x_n]/ I \text{  is a locally free $A$-module of rank $d$}\} 
\end{split}
\]
In particular, the $\CC$-points of $\Hilb_d(\AA^n)$ are precisely $d$-dimensional $\CC$-algebras which are quotients of $\CC[x_1, \dots, x_n]$, that is
\[
    \Hilb_d(\AA^n)(\CC) = \{I \subset \CC[x_1, \dots, x_n] \colon \dim_\CC \CC[x_1, \dots, x_n] / I = d\}.
\]
We say that an algebra $A = \CC[x_1, \dots, x_n] / I$ is \emph{smooth} if $V(I) \subset \AA^n$ is 
smooth, that is, it consists of $d$ distinct reduced points. Each smooth algebra is isomorphic to 
$\prod_{k=1}^d \CC$. The locus of smooth points is open in its irreducible component in the Hilbert scheme. 
The \emph{smoothable component} of $\Hilb_d(\AA^n)$ is the closure of the locus of points corresponding to smooth algebras. Intuitively, it consists of length $d$ subschemes of $\AA^n$ which are limits of tuples of $d$ points. \\

The characteristic numbers of the smooth algebra $\prod_{k=1}^d \CC$ are known in the literature as mixed Eulerian numbers. They have numerous interpretations and are important objects of study in combinatorics, see \cite{10.1093/imrn/rnn153, mixedeuler, ALCO_2024__7_5_1479_0}. The main theorem of this paper is the following:
\begin{theorem}[Theorem \ref{t:numbers}]
    The characteristic numbers of the smooth algebra $\prod_{k=1}^d \CC$ and the characteristic numbers of the algebra $\CC[x]/(x^d)$ coincide.
\end{theorem}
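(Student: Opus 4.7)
My strategy is to deduce this theorem as a special case of the constancy of characteristic numbers on $\Hilb_d(\AA^1)$ advertised in the abstract. Every $d$-dimensional quotient of $\CC[x]$ has the form $A_f = \CC[x]/(f)$ for a unique monic polynomial $f$ of degree $d$, so $\Hilb_d(\AA^1) \cong \AA^d$ is parametrized by the coefficients of $f$. Taking $f(x) = \prod_{k=1}^d (x - t_k)$ with pairwise distinct $t_k$ recovers $\prod_{k=1}^d \CC$, while $f(x) = x^d$ gives $\CC[x]/(x^d)$; both algebras are $\CC$-points of the same connected, smooth, irreducible base $\AA^d$, and it suffices to show that the characteristic numbers are constant as $f$ varies.

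Concretely, I would assemble the $X_{A_f}$ into an explicit algebraic family over $\Hilb_d(\AA^1)$. The canonical basis $1, x, \dots, x^{d-1}$ of $A_f$ identifies $A_f^*$ with $\CC^d$, and the flattening of the multiplication tensor yields a $d$-dimensional subspace $L_f \subset S^2 \CC^d$ spanned by the symmetric matrices $M_{f,k}$ whose $(i,j)$-entry is the coefficient of $x^k$ in $x^{i+j} \bmod f$. These matrices depend polynomially on the coefficients of $f$, so $L_f$ varies algebraically in $f$. Because each $A_f$ is a hypersurface and hence Gorenstein, $L_f$ always contains a full-rank symmetric matrix, so $\Phi|_{L_f}$ is a well-defined rational map and each $X_{A_f}$ is irreducible of pure dimension $d-1$. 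Equidimensionality of the fibers over a smooth irreducible base gives flatness of the total family $\mathcal{X} \subset \mathcal{CQ}(\CC^d) \times \Hilb_d(\AA^1)$, and properness is automatic from projectivity of $\mathcal{CQ}(\CC^d)$. By Fulton's principle of specialization of cycles, the classes $[X_{A_f}] \in \CH^{\bullet}(\mathcal{CQ}(\CC^d))$ are then independent of $f$; intersecting with the fixed divisor classes $L_1, \dots, L_{d-1}$ yields $c^{A_f}_{b_1,\dots,b_{d-1}} = c^{A_{f'}}_{b_1,\dots,b_{d-1}}$ for every $f, f'$, and specializing to the two distinguished polynomials finishes the argument.

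The principal difficulty lies in verifying this flatness, or equivalently the constancy of $[X_{A_f}]$ across the degeneration from a reduced union of points to a single fat point. Even with equidimensional fibers, flatness is delicate when the total space is realized as the image of a rational map whose indeterminacy locus varies with the parameter. One natural route is to construct $\mathcal{X}$ inside the relative variety of complete quadrics over $\Hilb_d(\AA^1)$ and exhibit equations cutting out $X_{A_f}$ that depend polynomially on $f$; another is to bypass flatness by computing $[X_{A_f}]$ in closed form using the incidence interpretation of $\mathcal{CQ}(\CC^d)$ in a product of projective spaces, and recognizing both sides as the same combinatorial expression (mixed Eulerian numbers on the smooth side, and a Hankel-type variant on the fat-point side).
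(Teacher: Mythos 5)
There is a genuine gap, and it is exactly at the point you flag as ``the principal difficulty.'' Your main route runs the logic backwards relative to what is actually true: constancy of characteristic numbers on $\Hilb_d(\AA^1)$ is a \emph{corollary} of this theorem in the paper, not an input to it. The specialization-of-cycles argument does not give constancy of $[X_{A_f}]$. If you take the closure $\mathcal{X}$ of the family over (a curve in) $\AA^d$, the resulting family of cycles is indeed flat over the smooth base, but its special fiber over $f = x^d$ is a cycle that \emph{contains} $X_{A_{x^d}}$ as a component and may acquire additional effective components supported on the boundary strata of $\mathcal{CQ}(\CC^d)$. Since the $L_i$ are nef, those extra components contribute non-negatively, so specialization only yields the inequality $c^{A_{x^d}}_{b_1,\dots,b_{d-1}} \leq e_{b_1,\dots,b_{d-1}}$ --- which is precisely the paper's Corollary \ref{cor:hilb_bounded}, valid on the smoothable component of any $\Hilb_d(\AA^n)$. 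The decisive objection to your method is that every step of it (Gorenstein fibers, equidimensionality, irreducible smooth base, properness of $\mathcal{CQ}$) applies verbatim to families over $\Hilb_d(\AA^2)$, where the paper notes the conclusion is false: there the intersection numbers genuinely drop because the limit cycle does break into extra components. So no argument of this shape can close the gap; the equality is the whole content of the theorem. (Also, as a side point, equidimensionality of fibers over a smooth base does not by itself give flatness; one needs to pass to the closure of the total space over a curve, and then the special fiber is the problematic limit cycle just described.)

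The paper's actual proof is the explicit computation you only gesture at in your last sentence. It first computes the characteristic \emph{sequence} of $\CC[x]/(x^{n+1})$ as the bidegree of the graph of the adjugate map in $\PP^n \times \PP^n$, exhibiting explicit divisors of class $x+y$ that cut out the graph transversally, giving binomial coefficients (Theorem \ref{(1,n)-numbers}). It then proves the key geometric identity $[X_{n+1}] \cdot S_k = \binom{n+1}{k}\,[X_k \times X_{n+1-k}]$ by analyzing broken orbits in the Chow-quotient model of $\mathcal{CQ}(V)$, and uses the change of basis between $\{S_i\}$ and $\{L_i\}$ to show that the full set of characteristic numbers of $\CC[x]/(x^{n+1})$ satisfies the recurrences that uniquely determine the mixed Eulerian numbers. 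To repair your proposal you would need to carry out a computation of comparable strength; neither the flatness argument nor the unexecuted ``Hankel-type variant'' remark substitutes for it.
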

The main idea of the proof is to show that the recurrence relations which determine mixed Eulerian numbers hold for the characteristic numbers of the algebra $\mathbb{C}[x]/(x^d)$. This is done by a careful analysis of the intersection of the associated subvariety of complete quadrics (interpreted as a Chow quotient of the Lagrangian Grassmannian) with two types of divisors in the Chow ring of complete quadrics. 

This has an interesting consequence from the perspective of Hilbert schemes of points. It is well-known that each point of $\Hilb_d(\AA^1)$ is a degeneration of a smooth algebra (treated as a point in $\Hilb_d(\AA^1)$ via e.g.~$\prod_{k=1}^d \CC \cong \CC[x]/(x^d - 1)$) and degenerates to $\CC[x]/(x^d)$. Characteristic numbers can only drop under degeneration, which leads to the following corollary:
\begin{corollary}\label{cor:hilb_constant}
    The characteristic numbers are constant on $\Hilb_d(\AA^1)$.
\end{corollary}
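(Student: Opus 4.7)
The strategy follows directly from Theorem~\ref{t:numbers} together with a semi-continuity principle for characteristic numbers under degeneration. Recall that $\Hilb_d(\AA^1) \cong \AA^d$: each length-$d$ subscheme of $\AA^1$ is cut out by a unique monic polynomial $f \in \CC[x]$ of degree $d$, and the isomorphism class of the algebra $A_f := \CC[x]/(f)$ depends only on the partition of $d$ given by the multiplicities of roots of $f$. The two extreme partitions $(1, 1, \ldots, 1)$ and $(d)$ correspond to the smooth algebra $\prod_{k=1}^d \CC \cong \CC[x]/(x^d-1)$ and the fat point $\CC[x]/(x^d)$ respectively, which by Theorem~\ref{t:numbers} have equal characteristic numbers.

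Given an arbitrary $A = A_f \in \Hilb_d(\AA^1)$ with $f = \prod_i (x - \alpha_i)^{m_i}$, I would exhibit two explicit flat families over $\AA^1$ connecting $A$ to the two extremes. For the upper bound, the family $f_t(x) = f(x) - t \cdot g(x)$ with $g$ a generic perturbation has $A$ as special fiber at $t = 0$ and a smooth algebra as generic fiber. For the lower bound, the rescaling family $f_t(x) = \prod_i (x - t\alpha_i)^{m_i}$ has $A$ as fiber at $t = 1$ and degenerates to $\CC[x]/(x^d)$ at $t = 0$. Both families are flat since each fiber is a length-$d$ subscheme of $\AA^1$ cut out by a monic polynomial of degree $d$.

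Now I would invoke the semi-continuity property stated in the excerpt: characteristic numbers can only drop under degeneration. Concretely, for a flat family of algebras $A_t$, the corresponding family of subvarieties $X_{L_t} \subset \mathcal{CQ}(V)$ has a flat limit of the form $[X_{L_0}] + E$ with $E$ effective, and intersection with nef hyperplane classes $H_1^{b_1} \cdots H_{n-1}^{b_{n-1}}$ preserves the resulting inequality. Applied to our two families, this gives
\[
    c^{\CC[x]/(x^d)}_{b_1, \ldots, b_{n-1}} \;\leq\; c^A_{b_1, \ldots, b_{n-1}} \;\leq\; c^{\prod_{k=1}^d \CC}_{b_1, \ldots, b_{n-1}},
\]
and by Theorem~\ref{t:numbers} the outer bounds agree, forcing equality throughout.

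The main (and comparatively mild) obstacle is to verify the semi-continuity argument rigorously: one must ensure that the rank-$(n-1)$ genericity condition on $L_t$ holds on a dense open subset of the base so that $X_{L_t}$ is well-defined in families, and that the flat-limit cycle really has $[X_{L_{0}}]$ as a component rather than as a multiple of it or with scheme-theoretic thickening. Both should be routine given that $L_t$ is intrinsically determined by the algebra structure $A_t$ and the two families above are visibly flat over a smooth base.
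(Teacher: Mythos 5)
Your proposal is correct and follows essentially the same route as the paper: sandwich an arbitrary $A \in \Hilb_d(\AA^1)$ between $\CC[x]/(x^d)$ (to which it degenerates) and the smooth algebra (of which it is a degeneration), apply the semicontinuity of characteristic numbers under degeneration, and conclude by Theorem~\ref{t:numbers} that the two bounds coincide. The explicit flat families you write down are a welcome elaboration of the paper's brief "it is well-known" justification, but the argument is the same.
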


One can intuitively think of this result in the following way.
We can consider how the subvariety of complete quadrics corresponding to the multiplication tensor of an algebra changes under degeneration. The result above allows us to better understand how those subvarieties behave for families $\AA^1 \to \Hilb_d(\AA^1)$. Namely, as we vary the
algebra, the subvariety does not break into multiple irreducible components which would give non-zero contributions to the intersection numbers. This shows that algebras in $\Hilb_d(\AA^1)$ behave especially well in families, as already for $\Hilb_d(\AA^2)$ there are numerous examples of the intersection numbers dropping.

We have the following corollary:
\begin{corollary}\label{cor:hilb_bounded}
    The characteristic numbers on the smoothable component of $\Hilb_d(\AA^n)$ are bounded from above by the characteristic numbers of the smooth algebra, that is, the mixed Eulerian numbers.
\end{corollary}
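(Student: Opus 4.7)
The plan is to combine the identification of the characteristic numbers of $\prod_{k=1}^d \CC$ with the mixed Eulerian numbers with a semicontinuity principle: the characteristic numbers of a flat family of $d$-dimensional algebras are upper semicontinuous in the parameter. Since the smoothable component is by definition the closure of the locus of smooth algebras, such a semicontinuity statement immediately implies the asserted upper bound.

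To prove semicontinuity, take $[A_0]$ in the smoothable component and choose a flat family $\{A_t\}_{t\in T}$ over a smooth pointed curve $(T, t_0)$ with $A_{t_0} \cong A_0$ and $A_t \cong \prod_{k=1}^d \CC$ for $t$ in a dense open subset $T^\circ \subseteq T$; such a family exists by the definition of smoothability. After shrinking $T$ or passing to an \'etale cover, trivialize the underlying rank-$d$ $\mathcal{O}_T$-module so that the multiplication tensors give a morphism $T \to \mathrm{Gr}(d, S^2 V^*)$ for a fixed $V \cong \CC^d$; write $L_t \subset S^2 V^*$ for the resulting family of subspaces. Let $\mathcal{X}$ denote the closure in $\mathcal{CQ}(V) \times T$ of the incidence locus $\bigcup_{t \in T^\circ} X_{L_t} \times \{t\}$. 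Since $\mathcal{X}$ is integral and dominates the smooth curve $T$, the projection $\pi \colon \mathcal{X} \to T$ is flat, hence the cycle classes of its fibers in $\CH_{d-1}(\mathcal{CQ}(V))$ are independent of $t$.

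By continuity, the fiber $\mathcal{X}_{t_0}$ contains $X_{L_{A_0}}$ set-theoretically (any point in the image of $\Phi|_{L_{A_0}}$ is a limit of points in $X_{L_t}$ as $t \to t_0$), so one may write $[\mathcal{X}_{t_0}] = [X_{L_{A_0}}] + [Y]$ with $[Y]$ an effective cycle class of dimension $d-1$. The divisor classes $L_1, \dots, L_{d-1}$ on $\mathcal{CQ}(V)$ are pullbacks of hyperplane classes along morphisms to projective spaces, hence globally generated and in particular nef, so $[Y] \cdot L_1^{b_1}\cdots L_{d-1}^{b_{d-1}} \geq 0$. Combining these observations yields
\[
    c^{A_0}_{b_1,\dots,b_{d-1}} = [X_{L_{A_0}}] \cdot L_1^{b_1}\cdots L_{d-1}^{b_{d-1}} \leq [\mathcal{X}_{t_0}] \cdot L_1^{b_1}\cdots L_{d-1}^{b_{d-1}} = [X_{L_t}] \cdot L_1^{b_1}\cdots L_{d-1}^{b_{d-1}} = c^{A_t}_{b_1,\dots,b_{d-1}}
\]
for any $t \in T^\circ$, and the right-hand side is the mixed Eulerian number indexed by $(b_1,\dots,b_{d-1})$.

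The main obstacle is ensuring that $X_{L_{A_0}}$ genuinely appears as a top-dimensional component of the flat limit $\mathcal{X}_{t_0}$, so that the decomposition $[\mathcal{X}_{t_0}] = [X_{L_{A_0}}] + [Y]$ with $[Y]$ effective is meaningful. This requires $L_{A_0}$ to contain a matrix of rank $d-1$, so that $X_{L_{A_0}}$ is defined and of the expected dimension $d-1$; if instead $\dim X_{L_{A_0}} < d-1$, then $c^{A_0}_{b_1,\dots,b_{d-1}} = 0$ by a dimension count and the inequality becomes automatic. The delicate case is a careful description of the flat-limit fiber along the boundary of the locus where $L \mapsto X_L$ is well-behaved, which one can approach using the Chow-quotient description of $\mathcal{CQ}(V)$ recalled in the introduction and interpreting the limit via its broken-orbit stratification.
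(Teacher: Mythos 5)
Your argument follows the same route the paper takes: the paper justifies this corollary in one line by the principle that ``characteristic numbers can only drop under degeneration,'' and your flat-limit construction --- spreading the cycles $[X_{L_t}]$ over a curve, observing that $X_{L_{A_0}}$ is a top-dimensional component of the limit cycle, and pairing the effective remainder against the nef classes $L_i$ --- is the right way to make that principle precise. The main body of your proof is sound.

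There is, however, one concrete error in your treatment of the degenerate case. You claim that if $L_{A_0}$ contains no matrix of rank $d-1$ then the relevant characteristic numbers of $A_0$ vanish ``by a dimension count,'' so the inequality is automatic. This is false. Under the paper's convention (Remark \ref{r:generalization}), when the maximal rank occurring in $L_{A_0}$ is $k<d-1$ one truncates $\Phi_{\mathcal{Q}}$ to the first $k$ factors; only the numbers $c^{A_0}_{b_1,\dots,b_{d-1}}$ with $b_i>0$ for some $i>k$ are declared zero, while the remaining ones are intersection numbers on the truncated variety, which is still birational to $\PP(L_{A_0})$ via the first coordinate projection and hence has dimension $d-1$, not less. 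These numbers need not vanish: $c_{d-1,0,\dots,0}=1$ always, and the trivial algebra $\CC[x_1,\dots,x_{d-1}]/(x_1,\dots,x_{d-1})^2$ (Proposition \ref{e:trivial}), which lies on the smoothable component for small $d$, has $c_{a_1,a_2,0,\dots,0}=2^{a_2}$ for $a_1>0$ even though its multiplication table contains no matrix of rank above $2$. The fix is routine --- run the identical flat-limit argument in the truncated product $\prod_{i\le k}\PP\bigl(S^2\bigwedge^i V\bigr)$, where the pushforward of the limit cycle under the coordinate projection still contains the truncated $X_{L_{A_0}}$ with multiplicity at least one, and conclude by the projection formula --- but as written your case split leaves these indices uncovered.
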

In general, the smoothable component need not be the whole Hilbert scheme, and the geometry of other components is poorly understood. This corollary could potentially give a new method of detecting points outside the smoothable component. \\

The \emph{big Coppersmith-Winograd tensor} is a minimal border rank tensor important in complexity theory~\cite[Section 5.5]{landsberg_complexity}. It is the main building block of the algorithms proving to best known bounds on the exponent of matrix multiplication $\omega$, see~\cite{MR4262465}. It arises as the multiplication tensor of the algebra
\[
    CW_n = \frac{\CC[x_0, x_1, \dots, x_{n-2}]}{(x_i x_j \colon i \neq j) + (x_i^2 - x_j^2 \colon i \neq j) + (x_0^3)}.
\]
We calculate its characteristic numbers in Subsection \ref{s:cw}. 

This has an interesting interpretation in the language of Hilbert schemes. The algebra $CW_n$ is Gorenstein because it is apolar to the form $\alpha_0^2 + \alpha_1^2 + \dots + \alpha_{n-2}^2$. A recent result \cite[Proposition 4.1]{MR4513165} shows that each Gorenstein algebra of dimension $n+1$ degenerates to $CW_n$, which gives a bound for characteristic numbers of such algebras. 
\begin{corollary}\label{cor:hilb_gorenstein}
    The characteristic numbers of the Gorenstein locus of $\Hilb_d(\AA^n)$, where $n \geq d-2$, are bounded from below by the characteristic numbers of $CW_{d-1}$.
\end{corollary}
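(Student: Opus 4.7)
The plan is to combine two ingredients already flagged in the introduction: the result from \cite{MR4513165} (cited as Proposition 4.1) that every Gorenstein algebra of dimension $d$ admits a flat degeneration to $CW_{d-1}$, together with the semicontinuity principle (``characteristic numbers can only drop under degeneration'') used earlier in this subsection to deduce Corollaries \ref{cor:hilb_constant} and \ref{cor:hilb_bounded}.

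First, I would verify that $CW_{d-1}$ itself defines a point of $\Hilb_d(\AA^n)$ whenever $n \geq d-2$. Direct inspection of the defining ideal gives $CW_{d-1}$ the $\CC$-basis $\{1, x_0, \dots, x_{d-3}, x_0^2\}$, so its vector space dimension is $d$ and as a $\CC$-algebra it is generated by the $d-2$ elements $x_0, \dots, x_{d-3}$. Hence $CW_{d-1}$ is a quotient of $\CC[x_1, \dots, x_n]$ exactly when $n \geq d-2$, matching the hypothesis of the corollary. In particular, both $A$ and $CW_{d-1}$ live in the same Hilbert scheme.

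Given any Gorenstein $A \in \Hilb_d(\AA^n)$, I would then apply \cite{MR4513165} to produce a one-parameter flat family of ideals in $\CC[x_1, \dots, x_n]$ whose generic fiber is isomorphic to $A$ and whose special fiber is isomorphic to $CW_{d-1}$. Applying the semicontinuity principle to this family would yield
\[
    c^A_{b_1, \dots, b_{n-1}} \geq c^{CW_{d-1}}_{b_1, \dots, b_{n-1}}
\]
for every index tuple $(b_1, \dots, b_{n-1})$, which is exactly the claimed bound.

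The only substantive step is the semicontinuity itself, and it is the same technical input underlying the earlier corollaries: one must argue that the classes $[X_{A_t}] \in \CH^{*}(\mathcal{CQ}(V))$ vary in a flat family whose flat limit decomposes as $[X_{CW_{d-1}}]$ plus an effective residual cycle, so that intersecting with a generic product $L_1^{b_1} \cdots L_{n-1}^{b_{n-1}}$ of ample classes produces a non-negative contribution from the residue while the total intersection number stays constant in $t$ by conservation of number. I expect this to be the main obstacle, but once it is established (presumably already in the course of proving Corollaries \ref{cor:hilb_constant} and \ref{cor:hilb_bounded}), the present corollary follows by directly combining the degeneration result of \cite{MR4513165} with that semicontinuity.
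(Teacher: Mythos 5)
Your proposal is correct and follows essentially the same route as the paper, which likewise deduces the corollary by combining the degeneration of any $d$-dimensional Gorenstein algebra to $CW_{d-1}$ from \cite[Proposition 4.1]{MR4513165} with the semicontinuity of characteristic numbers under degeneration already invoked for Corollaries \ref{cor:hilb_constant} and \ref{cor:hilb_bounded}. Your additional check that $CW_{d-1}$ has $\CC$-dimension $d$ and embedding dimension $d-2$, so that it lies in $\Hilb_d(\AA^n)$ precisely when $n \geq d-2$, is a useful explicit verification of the hypothesis that the paper leaves implicit.
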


\subsection{Acknowledgements}

We would like to express our deepest gratitude to Mateusz Michałek for his continuous support during our stay at the University of Konstanz. We thank Viktoriia Borovik, Matěj Doležálek, Joachim Jelisiejew, Joseph Landsberg, and Julian Weigert for their helpful suggestions to improve earlier drafts. We thank Austin Conner for sharing his code. The second author thanks the Institute of Advanced Study for the hospitality during his visit as part of the Special Year on Algebraic and Geometric Combinatorics. During the preparation of the article, Jakub Jagiełła was supported by the DAAD/Ostpartnerschaften programme and National Science Centre grant 2023/50/E/ST1/00336, Paweł Pielasa was supported by the DAAD/Ostpartnerschaften programme and Anatoli Shatsila was supported by the Polish National Science Center grant 2024/53/N/ST1/01634.

\tableofcontents

\section{Preliminaries}\label{s:prelim}

\subsection{Algebras}
In this section we gather some basic definitions and results about algebras considered in this paper. \\

We always use the term \emph{algebra} to refer to the following:
\begin{definition}
    An algebra is an associative unital commutative finite $\CC$-algebra.
\end{definition}
Each  algebra can be represented as $A = S/I$, where $S := \CC[x_1, \dots, x_m]$ is a polynomial ring in finitely many variables, $I \subset S$ is an ideal and $\dim_\CC A < \infty$. If $A = S/I$, then the condition $\dim_\CC A < \infty$ is equivalent to $A$ being a ring of Krull dimension zero, that is, each prime ideal of $A$ is maximal. \\
Each algebra $A$ decomposes as a finite direct sum of local algebras (namely, there exist maximal 
ideals $\maxideal_1, \dots, \maxideal_r \subset A$ such that the direct sum of localization maps 
$A \to \prod_{i=1}^r A_{\maxideal_i}$ is an isomorphism). We usually restrict our attention 
to the local case and then retrieve the general case. This additional assumption allows the use of more tools from commutative algebra. \\

Now we recall definitions of several classes of algebras considered in this paper.
\begin{definition}
    An algebra $A$ is Gorenstein if there exists a linear functional $\lambda\colon A \to \mathbb{C}$ such that 
    \[
        A \times A \xrightarrow[]{\cdot} A \xrightarrow[]{\lambda} \mathbb{C}
    \]
    is a perfect pairing.   
\end{definition}
The algebra $A$ is Gorenstein if each of the local algebras $A_{\maxideal_i}$ is Gorenstein. If $(A, \maxideal)$ is local, then the condition from the definition above is equivalent to the condition that the socle $\soc(A) := (0 : \maxideal) = \{a \in A \: | \: \maxideal a = 0\}$ is 1-dimensional. 
In this case $\lambda$ can be chosen to be the dual (after fixing a $\CC$-basis of $A$) of any non-zero element of $\soc(A)$. 

\begin{definition}
    An algebra $A$ is a complete intersection if there exists a regular ring $R$ and a regular 
    sequence $f_1, \dots, f_n \in R$ such that $A \cong R/(f_1, \dots, f_n)$. An algebra $A$ is locally a complete intersection if each localization $A_\maxideal$ is a complete intersection.    
\end{definition}
This turns out to be independent of the choice of the regular ring $R$, so we can always pick $R = S$. In the local case being a regular sequence is independent of the order of elements of this sequence.

Each local complete intersection algebra is Gorenstein. The other implication does not hold, which is illustrated by the following example.
\begin{example}
    Consider the algebra 
    \[
        A = \CC[x,y,z]/(x^2, y^2, xz, yz, z^2 - xy).
    \]
    This is a local algebra, with the maximal ideal given by $\maxideal = (x, y, z)$. An explicit calculation shows that $\soc(A) = \CC z^2$, so $A$ is indeed Gorenstein. However, the ideal $I = (x^2, y^2, xz, yz, z^2 - xy)$ cannot be generated by less than 5 elements, so $A$ is not a complete intersection. 
\end{example}

\subsection{Characteristic numbers}
\label{2.2}
In this section we define the characteristic numbers of algebras and introduce prerequisite notions. We start by introducing the variety of complete collineations, following \cite{dinu2021applicationsintersectiontheorymaximum}. Then we use it to define characteristic numbers of a linear subspace of the space of square matrices. The characteristic numbers of an algebra are then given by characteristic numbers of a certain linear subspace associated to this algebra.

\begin{definition}
Let $V$ be a $n$-dimensional vector space and consider the rational map
\[
    \Phi_{\mathcal{C}}: \mathbb{P}(V \otimes V) \dashrightarrow \mathbb{P}(V \otimes V) \times \mathbb{P}\left(\bigwedge^2V \otimes \bigwedge^2V\right) \times \ldots \times \mathbb{P}\left(\bigwedge^{n-1}V \otimes \bigwedge^{n-1}V\right),
\]
sending the class of $\varphi \in V \otimes V = \operatorname{Hom}(V^*, V)$ to the classes of $\bigwedge^i \varphi \in  \bigwedge^iV \otimes \bigwedge^i V = \operatorname{Hom}(\bigwedge^i V^*, \bigwedge^i V)$.
The closure of the image of this map is called the variety of complete collineations and is denoted by $\mathcal{CC}(V)$.    
\end{definition}

After choosing a basis on $V$ and taking the dual basis on $V^*$, we can describe $\Phi_{\mathcal{C}}$ 
very concretely. The elements of $V \otimes V$ are matrices $M$ of the corresponding linear maps 
$V^* \to V$. In these coordinates, $\bigwedge^i M$ is an $\binom{n}{i} \times \binom{n}{i}$-matrix 
whose entries are the $(i\times i)$-minors of $M$.

Alternatively, $\mathcal{CC}(V)$ can be obtained as a sequence of blowups
\[
    \PP(V \otimes V) = X_0 \leftarrow X_1 \leftarrow \dots \leftarrow X_{n-2} = \mathcal{CC}(V),
\]
where $X_i$ is the blow-up of $X_{i-1}$ at the strict transform of the locus of rank $i$ matrices.

\begin{definition}
\label{cq}
    Let $V$ be an $n$-dimensional vector space and consider the rational map
    \[
        \Phi_{\mathcal{Q}}: \mathbb{P}(S^2V) \dashrightarrow \mathbb{P}(S^2V) \times \mathbb{P}\left(S^2\left(\bigwedge^2V\right)\right) \times \ldots \times \mathbb{P}\left(S^2\left(\bigwedge^{n-1}V\right)\right)
    \]
    sending the class of $\varphi \in S^2 V \subset V \otimes V = \operatorname{Hom}(V^*, V)$ to the 
    classes of $\bigwedge^i \varphi \in   S^2 (\bigwedge^iV) \subset \bigwedge^iV \otimes \bigwedge^iV = \operatorname{Hom}(\bigwedge^i V^*, \bigwedge^i V)$.
The closure of the image of this map is called the variety of complete quadrics and is denoted by $\mathcal{CQ}(V)$.    
\end{definition}

We can interpret elements of $S^2 V$ as symmetric matrices and describe this map explicitly, analogously as above. Note that $\mathcal{CQ}(V)$ can be also described as 
\[
    \mathcal{CQ}(V) = \mathcal{CC}(V) \cap \mathbb{P}(S^2V) \times \mathbb{P}\left(S^2\left(\bigwedge^2V\right)\right) \times \ldots \times \mathbb{P}\left(S^2\left(\bigwedge^{n-1}V\right)\right).
\]

The variety $\mathcal{CQ}(V)$ can be also interpreted as a Chow quotient of the Lagrangian Grassmannian. We briefly describe the construction here, for details see \cite{Thaddeus}.
Consider the Grassmannian $\mathrm{Gr}(n, V \oplus V^*)$. Let $\omega$ be the standard symplectic form on $V \oplus V^*$. 
\begin{definition}
    The Lagrangian Grassmannian $\mathcal{LG}(V)$ is the subvariety of the Grassmannian $\mathrm{Gr}(n, V \oplus V^*)$ consisting of subspaces $W$ such that $\omega(W,W) = 0$. 
\end{definition}
The Grassmannian $\mathrm{Gr}(n, V \oplus V^*)$ admits an action of the torus $\CC^*$ with weight $1$ 
on $V$ and weight $-1$ on $V^*$. The symplectic form $\omega$ is invariant under this action, 
so this action descends to an action on the Lagrangian Grassmannian $\mathcal{LG}(V)$. Thus, one can consider the Chow quotient of $\mathcal{LG}(V)$ by the $\CC^*$-action. It is constructed as follows.

There exists an open subset $U \subset \mathcal{LG}(V)$ such that for each $x \in U$, the orbit 
closure $\overline{\CC^*\cdot x}$ represents the same homology class in $H_2(\mathcal{LG}(V), \ZZ)$, and the 
usual geometric quotient $U / \CC^*$ exists. There exists
a morphism taking an orbit $\CC^* \cdot x$ to the associated $1$-cycle $\overline{\CC^*\cdot x}$, 
considered as a point of the Chow variety parametrizing $1$-cycles. The Chow quotient of 
$\mathcal{LG}(V)$ is defined to be the closure of the image of $U / \CC^*$ in the Chow variety. 
Instead of trying to form a quotient by directly identifying orbits (which may lead to difficulties 
when orbits are not closed or when the quotient is not well-behaved), the Chow quotient “remembers” each 
orbit by considering its closure as an algebraic cycle. The Chow variety is a parameter space for 
such cycles. Thus, the Chow quotient encodes the geometric information of the orbit in a well-behaved 
projective variety. 

The Chow quotient of $\mathcal{LG}(V)$ is isomorphic to $\mathcal{CQ}(V)$. For $i=0,\dots,n$, there exists a closed embedding $\iota_i:\mathrm{Gr}(i, V) \hookrightarrow \mathcal{LG}(V)$ given by $U \mapsto U \oplus U^\perp$, where $U^\perp$ denotes the orthogonal complement of $U$ with respect to $\omega$. The construction of $\mathcal{CQ}(V)$ as the Chow quotient of $\mathcal{LG}(V)$ enables us to identify $\CC$-points of $\mathcal{CQ}(V)$ with broken orbits of the $\CC^*$-action on $\mathcal{LG}(V)$, that is, tuples of points 
from different embedded Grassmannians $\mathrm{Gr}(i,V)$ and $1$-dimensional orbits connecting the consecutive points. 
For a subspace $U \in \mathcal{LQ}(V)$, we have that 
\[
\lim_{t \to \infty} t \cdot U = \mathrm{pr}_{V}(U) \oplus U \cap V^*,\quad  \lim_{t \to 0} t \cdot U = U \cap V \oplus \mathrm{pr}_{V^*}(U).
\]
Thus for $k<l$ and subspaces $W_1 \in \mathrm{Gr}(k, V), W_2\in \mathrm{Gr}(l,V), W_1\subset W_2$ the unbroken orbits between
\[ W_1 \oplus W_1^{\bot} \in \mathcal{LG}(V) \text{ and } W_2 \oplus W_2^{\bot} \in \mathcal{LG}(V)\] 
are parametrized by the equivalence classes of isotropic subspaces $U \subset V \oplus V^*$ with the property $W_1 \oplus W_2^{\bot} \subseteq U \subseteq W_2 \oplus W_1^{\bot}$. The closure of the locus of such orbits is given by the variety $\mathcal{CQ}(W_2/ W_1)$.
We use this interpretation in Subsection \ref{s:smooth}.\\

Consider the coordinate projections
\[\pi_i: \mathcal{CQ}(V) \to \mathbb{P}\left(S^2\left(\bigwedge^iV\right)\right)\ \text{ and }\ \pi_i: \mathcal{CC}(V) \to \mathbb{P}\left(\bigwedge^iV \otimes \bigwedge^iV\right) \]
for $i = 1,\ldots,n-1$.
\begin{definition}
    For $i = 1,\ldots,n-1$ let $H_i$ denote a hyperplane in $\mathbb{P}\big(S^2(\bigwedge^iV)\big)$. We define $L_i \in \Pic_{\mathbb{Q}}(\mathcal{CQ}(V))$ to be the divisor $\pi_i^*([H_i])$.
\end{definition}
Analogously we can define the divisors $L_i$ on $\mathcal{CC}(V)$ to be the pullbacks of hyperplanes $H_i$ in $\mathbb{P}(\bigwedge^iV \otimes \bigwedge^iV)$. Their classes form bases of the vector spaces $\Pic_{\mathbb{Q}}(\mathcal{CQ}(V))$, $\Pic_{\mathbb{Q}}(\mathcal{CC}(V))$. The divisors $L_i$ are of fundamental importance in classical enumerative geometry. The intersection product $L_1^{a_1}\cdot L_2^{a_2}\cdot \dots L_{n-1}^{a_{n-1}}$ in $\CH(\mathcal{CQ}(V))$, with $\sum\limits_{i=1}^{n-1} a_i = \frac{n\cdot (n+1)}{2}-1$, counts the number of quadrics containing $a_1$ points, tangent to $a_2$ lines, $a_3$ planes, etc. This number is always bounded by the Bézout bound, and the difference corresponds to the multiplicity of the intersection of general hypersurfaces of appropriate degree at the base locus. This interpretation will play a role in our proof of Proposition \ref{p:ci}.\\

Consider a linear subspace $L \subset S^2V$ containing a matrix of rank at least $n-1$. We impose this assumption on $L$ so that the map $\Phi_{\mathcal{C}}|_{\mathbb{P}(L)}$ is well-defined. The main objects of study in this paper are the following varieties:
\begin{definition}
    Let $U_L \subset \mathbb{P}(L)$ be the nonempty open subset of $L$ corresponding to matrices of rank at least $n-1$. We define $X_L$ to be the closure of the image of $U_L$ under the rational map 
\[
    \Phi_{\mathcal{Q}}|_{U_L} : U_L \to \mathbb{P}(S^2V) \times \mathbb{P}\left(S^2\left(\bigwedge^2 V\right) \right) \times \ldots \times \mathbb{P}\left(S^2\left(\bigwedge^{n-1}V \right)\right).
\]
\end{definition}
Equivalently, it is the strict transform of $\mathbb{P}(L)$ under the projection $\pi_1 : \mathcal{CQ}(V) \rightarrow \mathbb{P}(S^2V)$.
\begin{lemma}\label{l:irreducible}
    The variety $X_L$ defined above is irreducible.
\end{lemma}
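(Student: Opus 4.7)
The proof should be essentially immediate from the definition, so the plan is to spell out the chain of standard facts about irreducibility. The strategy is:

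First, observe that $\mathbb{P}(L) \cong \mathbb{P}^d$ is an irreducible projective space, since $L$ is a linear subspace of $S^2 V$. Then note that $U_L \subset \mathbb{P}(L)$ is a nonempty Zariski-open subset: it is defined as the locus of matrices of rank at least $n-1$, which is the complement of the closed subvariety cut out by the $(n-1) \times (n-1)$ minors, and the standing hypothesis that $L$ contains a matrix of rank at least $n-1$ guarantees non-emptiness. A nonempty open subset of an irreducible space is irreducible, so $U_L$ is irreducible.

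Next, the rational map $\Phi_{\mathcal{Q}}$ is regular on the locus where all its coordinate maps are defined, and by construction this is exactly the locus where the $(n-1) \times (n-1)$ minors do not all vanish. Hence $\Phi_{\mathcal{Q}}|_{U_L}$ is a genuine morphism from $U_L$ to the target product of projective spaces. The scheme-theoretic image of an irreducible variety under a morphism is irreducible, and its closure in the ambient variety remains irreducible. Applying this to $\Phi_{\mathcal{Q}}|_{U_L}$ shows that $X_L$, defined as the closure of $\Phi_{\mathcal{Q}}(U_L)$, is irreducible.

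There is no genuine obstacle; the statement is a formal consequence of the definition plus the hypothesis on $L$ that makes $U_L$ nonempty. The only thing to be slightly careful about is to confirm that one uses the correct open subset (where the map is a morphism, not merely rational) so that standard image/closure facts apply cleanly; once that is noted, the proof is a one-line application of ``morphic image of an irreducible space, then closure, is irreducible.''
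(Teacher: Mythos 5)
Your argument is correct and follows essentially the same route as the paper: take the nonempty open (hence irreducible) subset $U_L \subset \PP(L)$ on which $\Phi_{\mathcal{Q}}$ is a morphism, note that the image of an irreducible variety under a morphism is irreducible, and conclude that the closure $X_L$ is irreducible. The only cosmetic difference is that the paper invokes the fact that $\Phi_{\mathcal{Q}}|_{U_L}$ is an isomorphism onto its image, whereas you only use that it is a morphism, which is all that is needed.
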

\begin{proof}
    Let $U_L$ be a dense open, and hence irreducible, subset of $\mathbb{P}(L)$ where $\Phi_{\mathcal{Q}}|_{U_L}$ is a well-defined morphism.
    Then, since the restriction $\Phi_{\mathcal{Q}}|_{U_L} \colon U_L \rightarrow \Phi_{\mathcal{Q}}(U_L)$ is an isomorphism, $\Phi_{\mathcal{Q}}(U_L)$
    is irreducible. Thus the closure $X_L = \overline{\Phi_{\mathcal{Q}}(U_L)}$ is also irreducible.
\end{proof}
 
Let $d+1 := \dim_\CC L$. Then the variety $X_L$ defined above has dimension $d$. We define the polynomial
\[
    P_L(D) \colon \Pic_{\mathbb{Q}}(\mathcal{CQ}(V))  \ni D \mapsto  [X_L]\cdot D^d \in \CH^n(\mathcal{CQ}(V)) \cong \ZZ,
\]
where $\CH^n(\mathcal{CQ}(V))$ denotes the highest degree summand of the Chow ring of $\mathcal{CQ}(V)$, classifying $0$-cycles up to rational equivalence.
In coordinates corresponding to the basis $\{L_i\}$ of $\Pic_{\mathbb{Q}}(\mathcal{CQ}(V))$, we have 
\[
    P_L\left(\sum_{i=1}^{n-1}t_iL_i\right) = \sum_{b_1 + \ldots + b_{n-1} = d} \binom{d}{b_1,\ldots,b_{n-1}} \cdot [S] \cdot L_1^{b_1} \ldots L_{n-1}^{b_{n-1}} \cdot  t_1^{b_1} \ldots t_{n-1}^{b_{n-1}}.
\]
\begin{definition}
    The coefficients $c^L_{b_1,\ldots,b_{n-1}} := [X_L] \cdot \prod_{i=1}^{n-1}L_i^{b_i}$ are integers called the characteristic numbers of the subspace $L$. The sequence of characteristic numbers of $L$ indexed by $(d-i, 0, \dots, 0, i)$ for $i = 0, 1, \dots, n$ is called the characteristic sequence of $L$. 
\end{definition}
We often drop $L$ from the notation if it is clear from the context. \\

The characteristic numbers defined above can be described in the following way. Choose a tuple $(b_1, \dots, b_{n-1})$ of non-negative integers such that $b_1 + \dots + b_{n-1} = d$. Then, by Bertini's theorem, the intersection of $X_L$ with $b_i$ general hyperplanes in $\mathbb{P}(S^2(\bigwedge^{i}V) )$ consists of finitely many reduced points. By definition, this number of points coincides with the product $[X_L] \cdot L_1^{b_1} \cdot \dots \cdot L_{n-1}^{b_{n-1}}$ in the Chow ring $\mathcal{CQ}(V)$, that is with $c^L_{b_1,\ldots,b_{n-1}}$.\\

If some of the $b_i$ are zero, then we can consider the image of $X_L$ in a smaller product of $\mathbb{P}(S^2(\bigwedge^{i}V))$ and conduct our computations there.

\begin{example}\label{e:grad}
    The characteristic number $c^L_{0, \dots, 0, d}$ is the degree of the restriction of rational map $ \varphi \colon \mathbb{P}(S^2V) \to \mathbb{P}(S^2 (\bigwedge^{n-1}V))$ to the subvariety $\PP(L)$. The map $\varphi$ coincides with the projectivization of the gradient of the determinant $\det \in \bigwedge^nS^2V^*$ 
    \[
        \grad \det \colon \PP(S^2 V) \dashrightarrow \PP (S^2V^*) \cong \mathbb{P}\left(S^2\left(\bigwedge^{n-1}V\right)\right).
    \]
    After further restricting to the open subset of invertible matrices $U_L \subset \mathbb{P}(L)$, this is just the projectivization of the map $A \mapsto A^{-1}$.
\end{example}

\begin{remark}\label{r:generalization}
    If the linear subspace contains matrices of ranks not exceeding $k < n-1$, then the map $\Phi_{\mathcal Q}$ is not well-defined.
    However, we can still talk about intersection numbers indexed by sequences $(b_1, \dots, b_k, 0, \dots, 0)$, by considering $\Phi_{\mathcal Q}$ truncated to the first $k$ factors of the codomain instead of the full map $\Phi_{\mathcal C}$ (and formally declaring the other intersection numbers to be zero). We sometimes use this more general notion.
\end{remark}

For each $1 \leq i \leq n-1$, we define 
\[
	S_i = \{(A_1, \dots, A_{n-1}) \in \mathcal{CQ}(V) \colon \mathrm{rk}(A_i) = 1\}
    = \overline{\{(A_1, \dots, A_{n-1}) \in \mathcal{CQ}(V) \colon \mathrm{rk}(A_1) = i\}}.
\]
The subvariety $S_i$ coincides with the strict transform of the exceptional locus of the blow-up $X_{i-1} \leftarrow X_i$. The subvarieties $S_i$ define divisors on $\mathcal{CQ}(V)$ which form a basis of $\Pic_{\mathbb{Q}}(\mathcal{CQ}(V))$. To simplify the notation, we denote their classes in $\Pic_{\mathbb{Q}}(\mathcal{CQ}(V))$ by $S_i$ as well. 

Considering $\mathcal{CQ}(V)$ as a Chow quotient of $\mathcal{LG}(V)$, the divisor $S_i$ can be interpreted as chains of orbits
whose union intersects the $i$-th embedded Grassmaninan $\mathrm{Gr}(i, V)$. We sometimes say that those are the orbits \textit{breaking at the $i$-th Grassmannian}.

The basis $\{S_i\}$ is related to the basis $\{L_i\}$ by the following relations, that were already known to Schubert \cite{Schubert} (see \cite[Theorem 3.13]{Massarenti} for the modern approach):
\[
	S_i = -L_{i-1} + 2 L_i - L_{i+1},
\]
where we set $L_0 = L_n = 0$. \\ 

Let us recall the notion of a tensor and describe the general construction which identifies algebras with certain tensors and tensors with linear subspaces of matrices. 
\begin{definition}
    A $3$-order tensor is an element $T \in U \otimes V \otimes W$ of the tensor product of finitely dimensional $\CC$-vector spaces $U, V, W$.
\end{definition} 

Each tensor $T$ gives rise to three linear maps: 
\[
    T_U \colon U^* \to V \otimes W,\quad T_V \colon V^* \to U \otimes W,\quad T_W \colon W^* \to U \otimes V.
\]
The images of these maps are called the \emph{flattenings} of $T$, with respect to $U, V, W$, respectively. A tensor can be recovered from its flattening up to an isomorphism. A tensor $T \in U \otimes V \otimes W$ is called \emph{$1_U$-generic} if $\dim_\CC V = \dim_\CC W$ and the flattening of $T$ with respect to $U$ contains an invertible linear map $T_U(u) \colon V^* \to W$ (and similarly for the other factors). A tensor is called \emph{$1$-generic} if $\dim U = \dim V = \dim W$ and it is $1_U$, $1_V$ and $1_W$-generic. 

If $U = V$ are $n$-dimensional and the flattening with respect to $W$ contains a matrix of rank at least $n-2$, then we can consider its characteristic numbers.
\begin{definition}
    The characteristic numbers of the tensor $T \in V \otimes V \otimes W$ are the characteristic numbers of the flattening of $T$ with respect to $W$.
\end{definition}

Let $A$ be an algebra of dimension $n$. The multiplication $A \times A \ni (a, b) \mapsto ab \in A$ is a bilinear map, so it corresponds to a tensor.
\begin{definition}
    The structure tensor of the algebra $A$ is the tensor $T^A \in A^* \otimes A^* \otimes A$ corresponding to the multiplication in $A$.
\end{definition}
We assume that $A$ is commutative, so the flattenings of $T^A$ with respect to the two first factors coincide and the flattening corresponding to the third factor lies in the subspace $S^2 A^* \subset A^* \otimes A^*$. We denote these flattenings by $L^{A^*}$ and $L^{A}$. We sometimes call $L^A$ the \emph{multiplication table} of $A$. We denote the variety $X_{L^A}$ by $X_A$.
\begin{example}
    Consider the algebra 
    \[
        A = \CC[x,y,z]/(x^2, y^2, xz, yz, z^2 - xy).
    \]
    Choose the basis $a_0 = 1, a_1 = x, a_2 = y, a_3 = z, a_4 = z^2$ of the vector space $A$. The structure tensor of the algebra $A$ is 
    \[
        T^A 
        = a_0^* \otimes a_0^* \otimes a_0 
        + \left( \sum_{i = 1}^4 (a_0^* \otimes a_i^* + a_i^* \otimes a_0^*) \otimes a_i \right) 
        + (a_1^* \otimes a_2^* + a_2^* \otimes a_1^* + a_3^* \otimes a_3^*) \otimes a_4.
    \]
    The first two factors correspond to the trivial relations $1 \cdot a = a \cdot 1 = a$ and the last one correspond to $x \cdot y = y \cdot x = z \cdot z$ (all of which are equal to $xy = z^2$). The multiplication table of $A$ can be represented as a matrix
    \[
        \begin{bmatrix}
            a_0 & a_1 & a_2 & a_3 & a_4 \\
            a_1 & 0 & a_4 & 0 & 0 \\
            a_2 & a_4 & 0 & 0 & 0 \\
            a_3 & 0 & 0 & a_4 & 0 \\
            a_4 & 0 & 0 & 0 & 0 \\
        \end{bmatrix}.
    \]
\end{example}

\begin{definition}
    The characteristic numbers of algebra $A$ are the characteristic numbers of its structure tensor $T^A$. The characteristic sequence of $A$ is the characteristic sequence of $T^A$. 
\end{definition}




\section{Characterisation of algebras}

In this section, we tackle the natural problem of extracting information about properties of algebras from their characteristic numbers. We characterise Gorenstein and local complete intersection algebras in terms of their characteristic sequence.

\subsection{Gorenstein algebras}

 


Assume that $A$ is a Gorenstein algebra and choose a $\lambda \in A^*$ inducing a perfect pairing $A \times A \to \CC$. This pairing gives a linear isomorphism $G \colon A \to A^*$, which yields an isomorphism $A^* \otimes A^* \otimes A \cong A^* \otimes A^* \otimes A^*$. In the following section we often use this identification to consider whether $T^A$ is symmetric or to talk about characteristic numbers of the subspace $L^{A^*}$.\\

We start by a characterisation in terms of structure tensors. Some of these implications are proved in \cite{VSP}, but we reprove it here for the sake of completeness.
\begin{lemma}\label{l:gorenstein_tensor}
    Let $A$ be an algebra. The following conditions are equivalent:
    \begin{enumerate}
        \item algebra $A$ is Gorenstein,
        \item\label{l:gorenstein_tensor:1} the flattening $L^A$ of the structure tensor $T^A$ contains an invertible matrix,
        \item\label{l:gorenstein_tensor:2} the structure tensor $T^A$ is 1-generic,
        \item\label{l:gorenstein_tensor:3} the structure tensor $T^A$ is symmetric.
    \end{enumerate}
\end{lemma}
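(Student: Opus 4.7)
The plan is to split the four-way equivalence into the chain $(1) \Leftrightarrow (2) \Leftrightarrow (3)$, which is mostly bookkeeping, and the separate equivalence $(1) \Leftrightarrow (4)$, which requires a short computation once one pins down what ``symmetric'' should mean for a tensor whose three factors are not obviously of the same type.

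For $(1) \Leftrightarrow (2)$, I would unpack $L^A \subset S^2 A^*$ as the image of the linear map $A^* \to S^2 A^*$ sending $\lambda$ to the symmetric bilinear form $(a,b) \mapsto \lambda(ab)$. Such a form is a perfect pairing precisely when the corresponding matrix is invertible, and by definition $A$ is Gorenstein exactly when there exists $\lambda \in A^*$ for which this form is perfect. The equivalence is then tautological. For $(2) \Leftrightarrow (3)$, $1$-genericity of $T^A$ requires each of its three flattenings to contain an invertible element; by commutativity of $A$ the flattenings with respect to the two $A^*$-factors coincide with $L^{A^*}$, the subspace of multiplication-by-$a$ endomorphisms of $A$. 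Since $1 \in A$ yields the identity endomorphism, $L^{A^*}$ always contains an invertible element, so $1$-genericity reduces to the same condition on $L^A$, which is $(2)$.

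For $(1) \Leftrightarrow (4)$, I would interpret symmetry of $T^A \in A^* \otimes A^* \otimes A$ as the existence of an isomorphism $\phi \colon A \to A^*$ such that $(\id \otimes \id \otimes \phi)(T^A)$ lies in $S^3 A^*$. The forward direction is a direct computation: given a Gorenstein pairing induced by $\lambda \in A^*$, take $\phi(a) = \lambda(a \cdot -)$; the resulting trilinear form on $A$ equals $(a,b,c) \mapsto \lambda(abc)$, which is symmetric by commutativity and associativity. For the reverse direction, given such a $\phi$, set $\beta(a,b) := \phi(a)(b)$, a non-degenerate (a priori non-symmetric) bilinear form on $A$. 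Symmetry of $(a,b,c) \mapsto \beta(c, ab)$ specialized at $b=1$ gives $\beta(c,a) = \beta(a,c)$, so $\beta$ is symmetric; specialized at $c=1$ it gives $\beta(a,b) = \beta(1, ab)$, whence $\beta(a,b) = \lambda(ab)$ for $\lambda(x) := \beta(1,x)$. Non-degeneracy of $\beta$ then provides the required Gorenstein pairing.

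The main subtlety is the formulation of $(4)$: since $T^A$ lives in $A^* \otimes A^* \otimes A$ rather than $(A^*)^{\otimes 3}$, calling it symmetric requires a choice of identification $A \cong A^*$, and I would state the condition as asking for \emph{some} such identification under which the tensor becomes symmetric. Once that is agreed upon, all four arguments above reduce to a few lines of direct manipulation, with no intersection-theoretic input required.
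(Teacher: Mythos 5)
Your proposal is correct, and three of its four implications coincide with the paper's argument: the identification of $L^A$ with the image of $\lambda \mapsto \bigl((a,b) \mapsto \lambda(ab)\bigr)$ for $(1) \Leftrightarrow (2)$, the observation that $1 \in A$ always yields an invertible element of the $A^*$-flattening so that $1$-genericity reduces to a condition on $L^A$ for $(2) \Leftrightarrow (3)$, and the computation $T(a,b,c) = \lambda(abc)$ for the implication from Gorenstein to symmetric (including your reading of condition $(4)$ as symmetry after \emph{some} identification $A \cong A^*$, which is exactly the clarification the paper makes). The one place you diverge is in closing the cycle: the paper deduces $(4) \Rightarrow (3)$ in one line, since $T^A$ is always $1_{A^*}$-generic and a symmetric tensor that is $1$-generic with respect to one factor is $1$-generic with respect to all of them; you instead prove $(4) \Rightarrow (1)$ directly, by setting $\beta(a,b) = \phi(a)(b)$, using the $S_3$-symmetry specialized at $b=1$ and $c=1$ to show $\beta$ is symmetric and of the form $\lambda(ab)$ with $\lambda = \beta(1,-)$, and invoking non-degeneracy of $\phi$. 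Your route costs a short extra computation but is self-contained and recovers the Gorenstein functional explicitly from the symmetrizing isomorphism; the paper's route is slicker but leans on the genericity bookkeeping already set up for $(2) \Leftrightarrow (3)$. Both are valid.
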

To be precise, condition \ref{l:gorenstein_tensor:3} means that there exists an isomorphism $A^* \cong A$ such that the structure tensor $T^A$, consider as a tensor in $A^* \otimes A^* \otimes A^*$, is symmetric.
\begin{proof}
    Recall that there are two maps 
    \[
        (T^A)_A \colon A^* \to S^2 A^* \quad \text{and}\quad (T^A)_{A^*} \colon A \to A^* \otimes A
    \]
    corresponding to the structure tensor $T^A$. 
        
         Condition \ref{l:gorenstein_tensor:1} is equivalent to $A$ being a Gorenstein algebra because the map $(T^A)_A$ gives a bijective correspondence between functionals $\lambda \in A^*$ inducing the perfect pairing and invertible matrices in $L^A$.
         The structure tensor $T^A$ is always $1_{A^*}$ generic, because $(T^A)_{A^*}$ maps the identity of $A$ to the identity map in $\operatorname{Hom}(A, A) \cong A^* \otimes A$. Therefore condition \ref{l:gorenstein_tensor:2} is equivalent to $T^A$ being $1_A$-generic, which is precisely condition \ref{l:gorenstein_tensor:1}.
        
         Assume that condition \ref{l:gorenstein_tensor:1} holds. Pick $\lambda \in A^*$ such that 
         $(T^A)_A(\lambda)$ is of full rank. As noted above, this is equivalent to saying that 
        \[
            G \colon A \ni a \mapsto (b \mapsto \lambda(ab)) \in A^*
        \]
        is an isomorphism. After applying this isomorphism to the last factor of $A^* \otimes A^* \otimes A$, the structure tensor $T^A$ can be viewed as a trilinear map
        \[
        T \colon A \times A \times A \ni (a, b, c) \mapsto G(c)(ab) \in \mathbb{C}.
        \]
        By definition of $G$ we have
        \[
            T(a, b, c) = G(c)(ab) = \lambda(abc),
        \]
        which does not depend on the permutation of $a, b, c$ as $A$ is commutative. Hence condition \ref{l:gorenstein_tensor:1} implies condition \ref{l:gorenstein_tensor:3}.
         We already proved that $T^A$ is always $1_{A^*}$-generic, so if condition \ref{l:gorenstein_tensor:3} holds, then $T^A$ is          $1$-generic. Thus condition \ref{l:gorenstein_tensor:3} implies condition \ref{l:gorenstein_tensor:2}.
\end{proof}

More surprisingly, Gorenstein algebras can be also characterised by their characteristic numbers. Part of it can be also deduced from \cite{Jordan}, using the theory of Jordan algebras. We start with an auxiliary lemma and then give the characterisation. 

\begin{lemma}\label{l:symmetric}
    Let $A$ be an algebra. The characteristic sequence for $L^{A^*}$ is symmetric.
\end{lemma}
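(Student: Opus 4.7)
The plan is to exhibit an involution $\tau$ of $\mathcal{CC}(A)$ that preserves $X_{L^{A^*}}$ and satisfies $\tau^* L_i = L_{n-i}$ in $\Pic_{\mathbb{Q}}(\mathcal{CC}(A))$. Granting this, since $\tau$ is an automorphism it acts as the identity on the top-degree Chow group, so for every tuple $(b_1, \dots, b_{n-1})$ summing to $\dim X_{L^{A^*}}$ we obtain
\[
    c^{L^{A^*}}_{b_1, \dots, b_{n-1}} = [X_{L^{A^*}}] \cdot L_1^{b_1} \cdots L_{n-1}^{b_{n-1}} = \tau^*[X_{L^{A^*}}] \cdot (\tau^*L_1)^{b_1} \cdots (\tau^*L_{n-1})^{b_{n-1}} = [X_{L^{A^*}}] \cdot L_{n-1}^{b_1} \cdots L_1^{b_{n-1}} = c^{L^{A^*}}_{b_{n-1}, \dots, b_1}.
\]
Specializing to $(d-i, 0, \dots, 0, i)$ yields the desired symmetry of the characteristic sequence.

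The involution $\tau$ is the classical duality on complete collineations: on the dense open locus of invertible matrices in $\PP(A \otimes A)$ it coincides with the rational map $M \mapsto M^{-1}$, a birational involution since $\adj(\adj(M)) = (\det M)^{n-2}\,M$. Using the natural isomorphism $\bigwedge^i A \cong \bigwedge^{n-i} A^* \otimes \det A$, inversion extends to a regular automorphism of $\mathcal{CC}(A)$ that interchanges the $i$-th and $(n-i)$-th factors of the ambient product of projective spaces, so $\tau^* L_i = L_{n-i}$.

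To verify that $\tau$ preserves $X_{L^{A^*}}$, I would use the description of $L^{A^*}$ as the image of the injective unital algebra homomorphism $\mu \colon A \to \operatorname{End}(A)$, $a \mapsto M_a$ with $M_a(b) = ab$. If $a \in A$ is a unit then $M_a^{-1} = M_{a^{-1}}$ again lies in $L^{A^*}$, so the invertible part of $\PP(L^{A^*})$ is closed under inversion. Taking closures in $\mathcal{CC}(A)$ gives $\tau(X_{L^{A^*}}) = X_{L^{A^*}}$, completing the argument.

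The main obstacle is rigorously setting up $\tau$ and verifying the action $\tau^*L_i = L_{n-i}$: this symmetry of $\mathcal{CC}(A)$ is classical, visible from its description via iterated blow-ups where the strata of rank $i$ and rank $n-i$ play dual roles (reflected in the symmetry $S_i \leftrightarrow S_{n-i}$ of the divisors introduced earlier), but it should be justified with care, either by invoking Vainsencher's construction or directly via the Chow-quotient description of $\mathcal{CC}(A)$ using the $\CC^*$-action $t \leftrightarrow t^{-1}$ on the Lagrangian Grassmannian. Everything downstream is a routine application of the projection formula and the elementary fact $M_a^{-1} = M_{a^{-1}}$.
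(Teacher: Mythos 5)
Your proof is correct and rests on exactly the same key observation as the paper's: $M_a^{-1} = M_{a^{-1}}$, so the invertible locus of $\PP(L^{A^*})$ is stable under matrix inversion (and it is dense there because $L^{A^*}$ contains the identity $M_1$, so the degenerate matrices form a proper closed subset). The packaging, however, is genuinely different. The paper never leaves the two extreme factors: it reads the characteristic sequence as the bidegree of the closure of the graph of the projectivized adjugate map restricted to $\PP(L^{A^*})$, observes that the graph of an involution over an inversion-stable dense open set is invariant under swapping the two factors, and uses that a swap-invariant subvariety of $\PP^N \times \PP^N$ has a palindromic bidegree — no statement about $\mathcal{CC}(A)$ as a whole is needed. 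You instead globalize inversion to the duality involution $\tau$ of the full variety of complete collineations with $\tau^* L_i = L_{n-i}$ and conclude via the projection formula. The cost is the regularity of $\tau$, which you rightly flag as the main obstacle; it does hold, since by the Jacobi identity for compound matrices $\bigwedge^{n-i}(\adj M)$ agrees projectively with $\bigwedge^{i} M$ under the identification $\bigwedge^{i} A \cong \bigwedge^{n-i} A^* \otimes \bigwedge^{n} A$, so $\tau$ is a permutation of the factors of the ambient product composed with linear isomorphisms and hence restricts to a regular automorphism of the closure. The benefit is a strictly stronger conclusion, $c_{b_1, \dots, b_{n-1}} = c_{b_{n-1}, \dots, b_1}$ for every index tuple rather than only for the tuples $(d-i, 0, \dots, 0, i)$ of the characteristic sequence. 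Either route proves the lemma.
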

\begin{proof}
    An invertible element $M \in L^{A^{*}}$ corresponds to a matrix of multiplication by a unit element in $A$, so we have $M^{-1} \in L^{A^*}$ as well. Matrices of rank at most $n-1$ form a codimension $1$ subspace of $L^{A^*}$, hence they do not contribute to the multidegree and we can consider the gradient restricted to the space of invertible matrices in $L^{A^*}$. There, the projectivization of the gradient map corresponds to matrix inversion, so the image of this map is symmetric and hence the sequence for $L^{A^*}$ is symmetric. 
\end{proof}

\begin{proposition}\label{p:gorenstein}
    Let $A$ be an algebra. The following conditions are equivalent:
    \begin{enumerate}
        \item\label{l:gorenstein_numbers:0} The algebra $A$ is Gorenstein,
        \item\label{l:gorenstein_numbers:1} all the characteristic numbers of $A$ and $L^{A^*}$ coincide,
        \item\label{l:gorenstein_numbers:2} the characteristic sequences of $A$ and $L^{A^*}$ coincide,
        \item\label{l:gorenstein_numbers:3} the characteristic sequence of $A$ is symmetric,
        \item\label{l:gorenstein_numbers:4} the characteristic sequence of $A$ ends with 1,
        \item\label{l:gorenstein_numbers:5} the characteristic sequence of $A$ ends with a non-zero number.
    \end{enumerate}
\end{proposition}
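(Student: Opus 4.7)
The plan is to prove the cyclic chain $(1)\Rightarrow(2)\Rightarrow(3)\Rightarrow(4)\Rightarrow(5)\Rightarrow(6)\Rightarrow(1)$; most steps follow quickly from results already in hand, and the real work concentrates in $(1)\Rightarrow(2)$ and $(6)\Rightarrow(1)$.

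For $(1)\Rightarrow(2)$, I will use Lemma \ref{l:gorenstein_tensor}: being Gorenstein is equivalent to symmetry of $T^A$, so the Gorenstein isomorphism $G\colon A\to A^*$ identifies the flattenings $L^{A^*}$ and $L^A$ as the same subspace of $S^2 A^*$; since characteristic numbers are invariants of the $\mathrm{GL}$-orbit of a subspace, they coincide. The implications $(2)\Rightarrow(3)$ and $(5)\Rightarrow(6)$ are tautological. For $(3)\Rightarrow(4)$, Lemma \ref{l:symmetric} shows that the characteristic sequence of $L^{A^*}$ is symmetric, so the hypothesis transfers this symmetry to the sequence of $A$. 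For $(4)\Rightarrow(5)$, the first entry $c^A_{d,0,\ldots,0}=[X_A]\cdot L_1^d$ equals the degree of the linear subspace $\PP(L^A)\subset \PP(S^2 A^*)$, which is $1$, and symmetry then forces the last entry to equal $1$ as well.

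The main obstacle is $(6)\Rightarrow(1)$, which I prove by contrapositive. Assume $A$ is not Gorenstein; by Lemma \ref{l:gorenstein_tensor}, $L^A$ contains no invertible matrix, and by Example \ref{e:grad} the number $c^A_{0,\ldots,0,d}$ is the degree of the restriction to $\PP(L^A)$ of the projectivized adjugate map $\PP(S^2 A^*)\dashrightarrow \PP(S^2 A)$, which on rank-$(n-1)$ matrices sends $M$ to the rank-one symmetric matrix $v_M v_M^{\top}$ with $v_M$ spanning $\ker M$. The aim is to show that the image of this restriction has dimension strictly below $d=n-1$. Writing $M_\lambda$ for the symmetric matrix of the bilinear form $B_\lambda(a,b)=\lambda(ab)$, and taking $A$ local with $s:=\dim\soc(A)\geq 2$, one has $\soc(A)\cap\ker\lambda\subseteq\ker M_\lambda$ for every $\lambda\in A^*$, giving $\dim\ker M_\lambda\geq s-1$. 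If $s\geq 3$, this forces the maximal rank of matrices in $L^A$ to be at most $n-2$, so $L^A$ fails the rank-$(n-1)$ assumption and Remark \ref{r:generalization} gives $c^A_{0,\ldots,0,d}=0$. If $s=2$, then for a generic $M_\lambda$ of the maximal rank $n-1$ the kernel has dimension $1=s-1$, so $\ker M_\lambda=\soc(A)\cap\ker\lambda\subseteq\soc(A)$; hence the image of $[M]\mapsto[\ker M]$ lies inside $\PP(\soc(A))\cong \PP^1$, and the image of the adjugate map lies in the degree-two Veronese of $\PP^1$, of dimension $1<d$. Finally, for non-local $A=\prod_i A_i$ with some $A_i$ non-Gorenstein, the multiplication matrices are block diagonal, so the same analysis applied to the $A_i$-block forces the image dimension below $d$. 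The delicate point is the dimension bookkeeping that pins down $\ker M_\lambda\subseteq\soc(A)$ for generic $\lambda$; this is the geometric input that converts the socle obstruction to Gorensteinness into the vanishing of the last characteristic number.
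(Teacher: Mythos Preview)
Your proof is correct and follows the same cyclic chain with the same auxiliary lemmas as the paper. The one substantive difference is in the mechanism for $(6)\Rightarrow(1)$. The paper fixes a basis adapted to the filtration $\maxideal_1\supset\maxideal_1^2\supset\cdots$ of a non-Gorenstein local factor $A_1$ and argues that two specific rows of every rank-$(n-1)$ matrix $M\in L^A$ are supported only in the first column; this forces the first coordinate of any kernel vector $v_M$ to vanish, so the adjugate image sits in a hyperplane and has dimension at most $n-2<d$. You instead use the coordinate-free containment $\soc(A_1)\cap\ker\lambda\subseteq\ker M_\lambda$, which for every rank-$(n-1)$ matrix forces $\ker M_\lambda\subseteq\soc(A_1)$ and bounds the adjugate image by the second Veronese of $\PP(\soc(A_1))\cong\PP^{1}$. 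Your route is more intrinsic and yields a much sharper dimension bound ($\leq 1$ rather than $\leq n-2$); it also sidesteps any dependence on the chosen basis actually placing the last two basis vectors inside $\soc(A_1)$. One small point worth tightening in your write-up: your phrase ``for a generic $M_\lambda$'' is stronger than needed and slightly misleading---your inclusion $\soc(A_1)\cap\ker\lambda\subseteq\ker M_\lambda$ holds for \emph{every} $\lambda$, so every rank-$(n-1)$ matrix has kernel in $\soc(A_1)$, which is what the degree computation actually requires.
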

\begin{proof}
    Consecutive implications from $A$ being Gorenstein to the condition \ref{l:gorenstein_numbers:5} are straightforward.
    Indeed, if $A$ is Gorenstein, then Lemma \ref{l:gorenstein_tensor} implies that $T$ is symmetric, so the condition \ref{l:gorenstein_numbers:1} holds.
    Condition \ref{l:gorenstein_numbers:1} trivially implies condition \ref{l:gorenstein_numbers:2}.
    Condition \ref{l:gorenstein_numbers:2} combined with Lemma \ref{l:symmetric} immediately imply condition \ref{l:gorenstein_numbers:3}.
    Condition \ref{l:gorenstein_numbers:3} implies condition \ref{l:gorenstein_numbers:4} because the first term of a characteristic sequence of a linear subspace is always $1$, which follows from interpreting the first characteristic number as the degree of the identity map $\PP(S^2 A^*) \to \PP(S^2 A^*)$.
    Condition \ref{l:gorenstein_numbers:4} trivially implies condition \ref{l:gorenstein_numbers:5}.

    The implication from condition \ref{l:gorenstein_numbers:5} to $A$ being Gorenstein is more involved. Let $n := \dim_{\mathbb{C}}(A)$ and $A = \bigoplus_{i=1}^k A_i$, where each $A_i$ is a local algebra with the maximal ideal $\maxideal_i$. Assume that condition \ref{l:gorenstein_numbers:5} holds and $A_1$ is not Gorenstein. Let $c_1 := \dim_{\CC} A_1$.  
    Choose a basis $\{a_j\}_{j=1,\dots ,c_1}$ of the algebra $A_1$ such that consecutive subsequences of $a_j$ form a basis of
    \[ A_1/\maxideal_1, \maxideal_1/\maxideal_1^2,\dots , \maxideal_1^{d_1-1}/\maxideal_1^{d_1}, \maxideal_1^{d_1},\]
    where $\maxideal^{d_1} \neq (0)$ and $\maxideal^{d_1+1} = (0)$. We choose any basis of the remaining algebras $A_2, \ldots, A_{k}$ and consider elements of $L^A$ as matrices in this basis. 
    Let $N_{A} \subseteq L^A$ be the subset of matrices of rank $n-1$. Since for any rank $n-1$ square matrix $M$ of size $n$ we have $\ker(M) = \im(\adj(M))$,
    there exist $\lambda_1, \ldots, \lambda_n \in \mathbb{C}$ such that
    \[ (\grad \det)(M) = \adj(M) = [\lambda_1 v_M \: | \: \lambda_2 v_M \: | \ldots | \lambda_n v_M] \]
    with $\langle v_M \rangle = \ker M$ for any $M \in N_A$. Since any $M \in N_A$ is a block matrix with blocks corresponding to local algebras $A_i$ and $\dim_{\CC} \soc(A_1)>1$, the only non-zero entries in the rows $c_1$ and $c_1-1$ of $M$ are in the first column of $M$. This implies that for any such $M$ the first coordinate of $v_M$ is zero (as the minor of $M$ corresponding to the first column and $c_1$-th row is zero), and it follows that
    \[ \dim \mathbb{P}(\grad \det)(N_A) \leq n-2. \]
    Therefore, the matrices of rank at most $n-1$ do not contribute to the degree $c_{0,0,\ldots, n-1}$. Since the degree is non-zero, there must be an invertible element in $L^A$. Therefore, Lemma \ref{l:gorenstein_tensor} implies that $A$ is Gorenstein. 
\end{proof}

As noted in Example \ref{e:grad}, the last characteristic number of an algebra $A$ is the degree of the restriction of the gradient $(\grad \det)|_{L^A}$. Proposition \ref{p:gorenstein} says that for Gorenstein algebras, this number is equal to $1$. One could also consider the gradient of the restriction $\grad (\det|_{L^A})$. It turns out that this map is particularly simple.

\begin{lemma}\label{l:onepoint}
    Let $A$ be a local Gorenstein algebra. Then the gradient of the restriction $\grad (\det|_{L^A})$ is a constant map.
\end{lemma}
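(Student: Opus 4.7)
The plan is to identify the $n$-dimensional vector space $L^A \subset S^2 A^*$ with $A^*$ via the flattening isomorphism $\lambda \mapsto B_\lambda$, where $B_\lambda(b,c) := \lambda(bc)$, and then to show that under this identification $\det(B_\lambda)$, viewed as a homogeneous polynomial of degree $n$ on $A^*$, is a non-zero scalar multiple of $\lambda(s)^n$, where $s$ is a generator of the $1$-dimensional socle of $A$. Once this is established, differentiation gives
\[
    \grad(\det|_{L^A})(\lambda) = c\,n\,\lambda(s)^{n-1} \cdot s,
\]
with $s \in A = (A^*)^*$ viewed as the gradient of the linear functional $\lambda \mapsto \lambda(s)$. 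This is always a scalar multiple of the fixed vector $s$, so the projectivization is the constant map to $[s]$.

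The heart of the argument is the identification of the vanishing locus of $\det(B_\lambda)$ with the hyperplane $\{\lambda : \lambda(s) = 0\} \subset A^*$. The bilinear form $B_\lambda$ is degenerate if and only if there exists a non-zero $b \in A$ with $\lambda(bA) = 0$, i.e.\ if and only if $\lambda$ annihilates some non-zero ideal of $A$. Here I use the standard fact that in a local Artinian algebra every non-zero ideal contains the socle (take any non-zero element of the ideal and multiply by the largest power of $\maxideal$ that does not kill it to land in $\soc(A)$). Together with the Gorenstein hypothesis, which forces $\soc(A) = \langle s \rangle$, this reduces the degeneracy condition to the single linear equation $\lambda(s) = 0$.

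With the set-theoretic zero locus of $\det(B_\lambda)$ pinned down as an irreducible hyperplane, the Nullstellensatz combined with unique factorization in the polynomial ring on $A^*$ gives $\det(B_\lambda) = c \cdot \lambda(s)^k$ for some $c \in \CC^{*}$ and $k \geq 1$; a degree count in the coordinates of $\lambda$ forces $k = n$, since $B_\lambda$ is linear in $\lambda$ and has size $n\times n$. The projective computation of the gradient then completes the proof.

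The main obstacle is the key step above: the clean identification of the degeneracy locus of $B_\lambda$ with $V(\lambda(s))$. This is where both hypotheses enter crucially, locality giving the existence of a unique minimal non-zero ideal and the Gorenstein condition ensuring that this minimal ideal is $1$-dimensional. Once this is in hand, the remainder of the argument is an elementary degree count and differentiation.
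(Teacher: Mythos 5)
Your proof is correct, and it takes a genuinely different route from the paper's. Both arguments reduce the lemma to the same key identity: $\det|_{L^A}$ is a non-zero scalar multiple of $\lambda(s)^n$, where $s$ spans $\soc(A)$, after which the gradient is visibly a fixed multiple of $s$ and hence projectively constant. The paper proves this identity by brute force: it chooses a basis adapted to the filtration $\maxideal^k$ and a second basis adapted to the dual filtration $(0:\maxideal^k)$, assigns weights to the entries of the multiplication table, and shows monomial by monomial that every term of the Laplace expansion of the determinant lies in $\CC \cdot a_n^n$, using the induced perfect pairings $\maxideal^k/\maxideal^{k+1} \times (0:\maxideal^{k+1})/(0:\maxideal^k) \to \CC$. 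You instead identify the degeneracy locus of $B_\lambda$ set-theoretically with the hyperplane $\{\lambda(s)=0\}$ -- using that $B_\lambda$ is degenerate iff $\lambda$ kills a non-zero ideal, that every non-zero ideal of a local Artinian ring meets the socle, and that Gorenstein makes the socle one-dimensional -- and then invoke the Nullstellensatz, unique factorization, and a degree count. Your argument is shorter and more conceptual, and it makes transparent exactly where locality and the Gorenstein hypothesis enter; the paper's computation is heavier but yields slightly finer information about which monomials of the determinant survive. One small point of care: the blanket claim ``every non-zero ideal contains the socle'' is false in general (an ideal need only meet the socle non-trivially); your parenthetical argument proves the correct statement, and the containment you actually use does follow once $\soc(A)$ is one-dimensional, exactly as you note.
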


\begin{proof}
    
    Let $d$ be the greatest number such that $\maxideal^d \neq 0$ (recall that by our assumption $\soc(A) = \maxideal^d$ is one-dimensional). Choose a basis $a_1, \dots, a_n$ of $A$ in such a way that consecutive subsequences of $(a_1, \dots, a_n)$ are bases of the vector spaces $A/ \maxideal, \maxideal/\maxideal^2, \dots, \maxideal^d/\maxideal^{d+1} = \maxideal^d$. In particular, $a_1$ corresponds to an invertible element in $A$ and $a_n$ corresponds to a generator of the socle.
    
    Let $M$ be the multiplication table, thought of as a matrix in variables $a_i$. We claim that $\det M$ is a non-zero multiple of $a_n^n$. Let us recall a property of (not necessarily homogeneous) Gorenstein algebras that follows from \cite[Chapter 2]{Iarrobino}: The perfect pairing \(\mu: A \times A \to \mathbb{C}\) of a Gorenstein algebra $A$ induces the perfect pairing $$\bar{\mu}: \maxideal^k/\maxideal^{k+1} \times (0:\maxideal^{k+1})/(0: \maxideal^k) \to \mathbb{C}$$ $$(x_1 + \maxideal^{k+1}, x_2 + (0: \maxideal^k)) \mapsto \mu(x_1, x_2)$$ for any $k \leq d$.

    Now, let $y_1, \ldots, y_n$ be a basis of $A$ such that the consecutive subsequences of $(y_1, \ldots, y_n)$ are bases of the vector spaces \(A/(0:\maxideal^d), (0:\maxideal^d)/(0:\maxideal^{d-1}), \ldots, (0:\maxideal) = \maxideal^d\) and $N: A \to A$ be the linear transformation such that $N(a_j) = y_j$ for $j = 1 ,\ldots, n$. Let $M' = M \cdot N^{\top}$, where we regard $N$ as the matrix of the map $N$ in the basis $a_1,\ldots,a_n$ of $A$. Since $N$ is invertible it is enough to show the claim for $M'$.
    
    Recall that the entry $m'_{k,l}$ of the matrix $M'$ is the result of the multiplication of $a_k$ and $y_l$. We associate a pair of weights to each entry $(k,l)$ of $M'$ by the formula $$(w_{kl}^1, w_{kl}^2) = (\max_s\{0 \leq s \leq d \mid a_k \in \maxideal^s\}, \min_s\{0 \leq s \leq d \mid y_l \in (0:\maxideal^{s+1})\}).$$
    Let $F$ be a monomial in $\det M'$ corresponding to the choice of indices $(i_1, j_1), \ldots ,(i_n, j_n)$. It is enough to show that $F$ is a (possibly zero) multiple of $a_n^n$. Note that it follows from the induced perfect pairing above that $\dim_{\mathbb{C}}(\maxideal^s/\maxideal^{s+1}) = \dim_{\mathbb{C}}((0:\maxideal^{s+1})/(0:\maxideal^s))$ for each $s$. Therefore, we have $$\sum_{k=1}^n w_{i_k, j_k}^1 = \sum_{k=1}^n w_{i_k, j_k}^2.$$ We claim that if $F \neq 0$ then $w_{i_k, j_k}^1 = w_{i_k, j_k}^2$ for all $k$. If this is not the case, then there exists $k$ such that $w_{i_k, j_k}^1 > w_{i_k, j_k}^2$. Then $m'_{i_k, j_k} = 0$, since this entry is the result of the multiplication of an element in $(0:\maxideal^{w_{i_k, j_k}^2 + 1})$ and an element in $\maxideal^{w_{i_k, j_k}^1} \subseteq \maxideal^{w_{i_k, j_k}^2 + 1}$. It follows that for each $k$ we have $m'_{i_k, j_k} \in \maxideal^{w^1_{i_k, j_k}} \cdot (0:\maxideal^{w^1_{i_k, j_k} + 1}) \subseteq m^d = \langle a_n \rangle$, hence $F = c \cdot a_n^n$ for some $c \in \mathbb{C}$, which proves the claim for $\det M$.
    
    Now, since $a_1, \ldots, a_n$ can be regarded as the coordinates of $L^A$, it follows that
    \[
        \grad (\det|_{L^A}) \colon [a_1:\ldots:a_n] \mapsto [0:\ldots:0: a_n^{n-1}],
    \]
    hence the image is just one point.
\end{proof}

\begin{remark}
    Note that we can write every finite algebra as a direct sum $A \simeq A_1 \oplus \dots \oplus A_r$ and then $L^{A_{\maxideal_1} \oplus \dots \oplus A_{\maxideal_r}} \cong L^{A_{\maxideal_1}} \oplus \dots \oplus L^{A_{\maxideal_r}}$. Applying the results above to each of the summands of $A$, we see that the determinant of the multiplication table is a monomial of the form $a_1^{d_1} \cdot \dots \cdot a_r^{d_r}$, where $d_i := dim_{\CC}(A_i)$. Thus the gradient map is given by a sum $\sum_{j=1}^r a_1^{d_1}\cdot \hdots a_j^{d_j-1} \hdots \cdot a_r^{d_r}$. Thus the characteristic sequence for $A$ depends only on the dimensions of the indecomposable summands $A_i$. If $A$ were non-Gorenstein, then $L_A$ would not contain an invertible matrix, and thus the determinant restricted to $L_A$ would be zero, in particular its gradient wouldn't be well-defined.
\end{remark}

\subsection{Local complete intersections}
First, we give two auxiliary lemmas.
\begin{lemma}\label{l: 2 x 2 minors}
    Let $A$ be an algebra and let $I \subset \Sym A^*$ be an ideal generated by some linear 
    combinations of $(2 \times 2)$-minors of the multiplication table of $A$. Then there exists a 
    surjective map $(\Sym A^*)/I \to A$, which is an isomorphism if and only if the ideal $I$ coincides with the ideal generated by all $(2 \times 2)$-minors.
\end{lemma}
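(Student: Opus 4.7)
I plan to construct the surjective map explicitly and then analyse when it becomes an isomorphism.

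For the map, fix a basis $a_1, \dots, a_n$ of $A$ with $a_1 = 1$ and let $t_1, \dots, t_n \in A^*$ be the dual basis, so $\Sym A^* = \CC[t_1, \dots, t_n]$. Define $\phi \colon \Sym A^* \to A$ to be the $\CC$-algebra homomorphism $\phi(t_i) = a_i$; this is surjective since $a_1, \dots, a_n$ generate $A$. Writing $a_i a_j = \sum_k c_{ij}^k a_k$, the multiplication table has entries $M_{ij} = \sum_k c_{ij}^k t_k$, and applying $\phi$ entry-wise yields $\phi(M) = (a_i a_j)_{i,j} = \vec{a}\, \vec{a}^{\mathrm{T}}$, a matrix of rank one. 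Hence every $(2 \times 2)$-minor of $M$ lies in $\ker \phi$, so the ideal $J$ generated by all such minors satisfies $J \subseteq \ker \phi$, and any $I \subseteq J$ (in particular one generated by linear combinations of $(2 \times 2)$-minors) induces the surjection $(\Sym A^*)/I \twoheadrightarrow A$.

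For the equivalence, the implication "isomorphism $\Rightarrow I = J$" is immediate: if $I \subsetneq J$, then the factorisation $(\Sym A^*)/I \twoheadrightarrow (\Sym A^*)/J \twoheadrightarrow A$ has nontrivial kernel $J/I$ at the first step. The substantive direction is that $I = J$ forces the map to be an isomorphism. The key point is that the $(2 \times 2)$-minors built from the identity row and column of $M$ already encode the full multiplicative structure of $A$: since $M_{11} = t_1$, $M_{1j} = t_j$, and $M_{i1} = t_i$ (using $a_1 = 1$), the minor on rows $\{1, i\}$ and columns $\{1, j\}$ is
\[
    t_1 M_{ij} - t_i t_j = \sum_k c_{ij}^k t_1 t_k - t_i t_j,
\]
which modulo $J$ yields $t_i t_j \equiv \sum_k c_{ij}^k t_1 t_k$. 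After passing to the affine chart $t_1 = 1$ (dehomogenising at the identity direction), these are precisely the defining relations $a_i a_j = \sum_k c_{ij}^k a_k$ of $A$. The remaining $(2 \times 2)$-minors, those avoiding row and column $1$, encode rank-one consistency of the already rank-one matrix and are consequences of the identity-row/column minors modulo $J$.

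The main obstacle I anticipate is making the reduction to the affine chart $t_1 = 1$ rigorous enough to genuinely identify $\ker \phi$ with $J$. Naively, $\ker \phi$ contains the inhomogeneous element $t_1 - 1$, so the isomorphism must be read with the identity direction normalised; I expect the argument to conclude by a Hilbert-function comparison between the graded pieces of $(\Sym A^*)/J$ and $A$, verifying that modding out by $J$ produces no excess beyond the multiplicative relations of $A$ on the relevant affine patch.
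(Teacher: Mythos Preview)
Your approach coincides with the paper's: both fix a basis containing the identity, define the algebra map $t_i \mapsto a_i$, observe that the image of the multiplication table is the rank-one matrix $(a_i a_j)$ so every $2\times 2$ minor lies in the kernel, and then argue that the minors built from the identity row and column, namely $t_1 M_{ij} - t_i t_j$, already encode all multiplicative relations $a_i a_j = \sum_k c_{ij}^k a_k$ of $A$.

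You are in fact more careful than the paper on the point you flag at the end. Since $J$ is homogeneous and generated in degree two while $t_1 - 1 \in \ker\phi$, the graded ring $(\Sym A^*)/J$ is infinite-dimensional and the surjection to $A$ is never literally an isomorphism of $\CC$-algebras; the paper's proof elides this by invoking an ``inverse map $A \to S/I$'' without specifying in what category it is inverse. Your proposed resolution---dehomogenise at $t_1 = 1$, or equivalently compare Hilbert functions---is exactly the right reading and is what the application in Proposition~\ref{p:ci} actually uses: the identity-row minors give $t_i t_j \equiv t_1 \sum_k c_{ij}^k t_k$ modulo $J$, so every monomial of degree $d \geq 1$ reduces to $t_1^{d-1}$ times a linear form, whence $\dim_\CC (S/J)_d = n$ for all $d \geq 1$, i.e.\ the projective base locus has degree $n = \dim_\CC A$.
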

\begin{proof}
    Choose a basis $a_0, \dots, a_{n-1}$ of $A$ such that $a_0$ corresponds to $1 \in A$, let 
    $x_0, \dots, x_{n-1}$ be the dual basis and identify $\Sym A^*$ with 
    $S = \CC[x_0, \dots, x_{n-1}]$.
    Consider the surjective map $S \to A$ defined on generators by $x_i \mapsto a_i$. Let 
    $\ell_{kl} \in S_1$ be the entry of the multiplication table corresponding to $a_k a_l$. 
    Each $2 \times 2$ minor is indexed by two pairs $i_1 < j_1, i_2 < j_2$. 
    The minor $\ell_{i_1 i_2} \ell_{j_1 j_2} - \ell_{i_1 j_2} \ell_{i_2 j_1}$ is mapped to 
    $a_{i_1}a_{i_2}\cdot a_{j_1}a_{j_2} - a_{i_1}a_{j_2} \cdot a_{i_2} a_{j_1} = 0$, so the map 
    $S \to A$ factors through a map $S / I \to A$ which is still surjective. 

    The multiplication table specifies the algebra, so all relations in $A$ are generated by relations of the form $a_k a_l = \sum_i \lambda_i^{kl} a_i = a_0 \sum_i \lambda_i^{kl} a_i$, which are non-trivial only for $k, l > 0$. Such relations are encoded by the minors indexed by pairs $0 < k, 0 < l$, that is, $x_k x_l = \ell_{0k} \ell_{0l} = \ell_{00} \ell_{kl} = x_0 \sum_i \lambda_i^{kl} x_i$. Therefore if $I$ is generated by all $(2 \times 2)$-minors, then the inverse map $A \to S/I$ is well-defined, so we have an isomorphism $S/I \cong A$. If $I$ is strictly contained in the ideal generated by all $(2 \times 2)$-minors, then $\dim_\CC S / I > \dim_\CC A$, so there is no isomorphism.
\end{proof}

\begin{lemma}\label{l:degree}
    Let $A$ be a local algebra and consider the map $\varphi \colon \PP (L^A) \dashrightarrow \PP (S^2 (\bigwedge^2 A^*))$. If $A$ has non-trivial multiplication (that is, $\maxideal^2 \neq 0$), then the map $\varphi$ has degree $1$. 
\end{lemma}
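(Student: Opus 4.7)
My plan is to prove that $\varphi$ is birational onto its image, from which $\deg \varphi = 1$ follows. The argument has two parts: (i) exhibit a matrix in $L^A$ of rank at least $3$, and (ii) show that for any symmetric matrix $M_0$ of rank $r \geq 3$, the equality $[\wedge^2 M] = [\wedge^2 M_0]$ forces $[M] = [M_0]$.

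For (i), I would fix a basis $a_0 = 1, a_1, \ldots, a_{n-1}$ of $A$ adapted to the $\maxideal$-adic filtration, so each $a_i$ has a depth $s_i$ with $a_i \in \maxideal^{s_i} \setminus \maxideal^{s_i+1}$; here $s_0 = 0$ and $s_i \geq 1$ for $i \geq 1$. The structure constants then satisfy $c_{ij}^k = 0$ unless $s_k \geq s_i + s_j$. Since $\maxideal^2 \neq 0$, there exist $i, j \geq 1$ and some $k$ with $c_{ij}^k \neq 0$; the depth inequality gives $s_k \geq 2$, and hence $k \notin \{0, i, j\}$. Consider $M_k \in L^A$, the basis matrix dual to $a_k$. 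The nonzero entries $(M_k)_{0k} = (M_k)_{k0} = 1$ and $(M_k)_{ij} = c_{ij}^k$, combined with the vanishing of $(M_k)_{00}, (M_k)_{0i}, (M_k)_{0j}, (M_k)_{ik}, (M_k)_{kj}, (M_k)_{kk}$ (again by the depth constraint), make the $3 \times 3$ submatrix on rows $\{0, i, k\}$ and columns $\{0, j, k\}$ antidiagonal up to the central entry $c_{ij}^k$. Its determinant is $-c_{ij}^k \neq 0$, so $\mathrm{rk}(M_k) \geq 3$, and lower-semicontinuity of corank implies that the rank-$\geq 3$ locus is open and dense in $\PP(L^A)$.

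For (ii), viewing $M \colon A \to A^*$, the identities $\mathrm{Im}(\wedge^2 M) = \wedge^2 \mathrm{Im}(M)$ and $\ker(\wedge^2 M) = \ker(M) \wedge A$, together with the fact that a subspace of dimension $\geq 2$ is recovered from its second exterior power (as the union of the $2$-planes underlying its nonzero decomposable elements), let one read off $\mathrm{Im}(M) = \mathrm{Im}(M_0)$ and $\ker(M) = \ker(M_0)$ from $\wedge^2 M = \lambda \wedge^2 M_0$. The induced isomorphisms $\bar M, \bar M_0 \colon A/\ker(M_0) \to \mathrm{Im}(M_0)$ of $r$-dimensional spaces then satisfy $\wedge^2(\bar M_0^{-1} \bar M) = \lambda \cdot \mathrm{id}$. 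Setting $S := \bar M_0^{-1} \bar M$, the identity $Su_1 \wedge Su_2 = \lambda\, u_1 \wedge u_2$ forces $S$ to preserve every $2$-plane; since $r \geq 3$, intersecting $2$-planes shows that $S$ also preserves every line, so $S$ is a scalar multiple of the identity and $[M] = [M_0]$.

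The main obstacle, and the precise reason the hypothesis $\maxideal^2 \neq 0$ cannot be dropped, lies in part (i): if $\maxideal^2 = 0$ then every element of $L^A$ has rank at most $2$ (it is a linear combination of $e_0 e_0^T$ and matrices of the form $e_0 e_k^T + e_k e_0^T$, so its image is contained in the span of $e_0$ and one further vector), the image of $\varphi$ drops in dimension, and the degree is not well-defined as $1$. The depth-adapted basis is crucial for producing the rank-$3$ minor cleanly, because without it the indices $i, j, k$ could coincide and the candidate $3 \times 3$ block degenerate to rank $2$.
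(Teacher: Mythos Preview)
Your proof is correct, but it follows a genuinely different route from the paper's. The paper works directly with explicit $2\times 2$ minors: choosing $a_{n-1}\in\maxideal^d$ (so $a_j a_{n-1}=0$ for $j>0$) produces minors $x_j x_{n-1}$, and the hypothesis $\maxideal^2\neq 0$ supplies one further minor of the shape $x_0\ell - x_i x_j$; comparing these coordinates on the affine chart $\{x_0\neq 0,\,x_{n-1}\neq 0\}$ forces two points with the same image to coincide. Your argument instead factors the problem into a purely linear--algebraic statement (the second exterior power is injective, up to scalar, on matrices of rank $\geq 3$) and an algebra-specific input (the existence of such a matrix in $L^A$), which you extract via the depth filtration. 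Both arguments hinge on the same structural consequence of $\maxideal^2\neq 0$, but they exploit it differently: the paper's proof is shorter and has the side benefit that the explicit minors $x_j x_{n-1}$ are reused verbatim in the proof of Proposition~\ref{p:ci}, whereas your approach cleanly isolates the general principle and would apply unchanged to any linear subspace of $S^2V$ containing a matrix of rank at least $3$.
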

\begin{proof}
    Let $\maxideal \subset A$ be the maximal ideal and let $d$ be the largest integer such that 
    $\maxideal^d \neq 0$. Choose a basis $a_0, \dots, a_{n-1}$ of $A$ so that $a_0 = 1$, 
    $a_j \in \maxideal$ for $j > 0$ and $a_{n-1}$ is an element from $\maxideal^d$.
    Let $x_0, \dots, x_{n-1}$ be the dual basis. 
    We have $a_j a_{n-1} = 0$ for $j > 0$, so the $2 \times 2$-minor of the multiplication table 
    indexed by $0, n-1$ and $0, j$ is $x_j x_{n-1}$. 
    
    Consider $\varphi$ restricted to $\{x_0 \neq 0, x_{n-1} \neq 0\}$. If $s = [1 : s_1 : \dots :s_{n-1}], t = [1 : t_1 : \dots : t_{n-1}]$ are mapped to the same point, then in particular $[s_1 s_{n-1} : \dots : s_{n-1}^2] = [t_1 t_{n-1} : \dots : t_{n-1}^2]$, so there exists a non-zero scalar $\lambda$ such that $(s_1, \dots, s_{n-1}) = \lambda (t_1, \dots, t_{n-1})$. We assumed that $A$ has non-trivial multiplication, then there is also a minor of the form $x_0 l - x_i x_j$ for some non-zero linear form $l$. We have
    \[
        \varphi(t) = [t_1 t_{n-1} : \dots : t_{n-1}^2 : l(t) - t_i t_j : \dots] = [\lambda^2 t_1 t_{n-1} : \dots : \lambda^2 t_{n-1}^2 : \lambda^2(l(t) - t_i t_j) : \dots]
    \]
    and 
    \[
        \varphi(s) = [\lambda t_1  \cdot \lambda t_{n-1} : \dots : \lambda t_{n-1} \cdot \lambda t_{n-1} : l(\lambda t) - \lambda t_i \cdot \lambda t_j : \dots],
    \]
    so
    $
        \lambda l(t) - \lambda^2 t_i t_j = \lambda^2(l(t) - t_it_j),
    $
    which implies that $\lambda = 1$. It means that the map $\varphi$ is generically injective and hence it has degree $1$. 
\end{proof}

The main characterisation is the following. 

\begin{proposition}\label{p:ci}
    Let $A$ be an $n$-dimensional local algebra. Then 
    \[
    		c^A_{0,n-1,0,\dots,0} \leq 2^{n-1} - n
    \] 
    and the equality holds if and only if $A$ is a (local) complete intersection.
\end{proposition}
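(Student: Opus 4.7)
The plan is to interpret $c^A_{0,n-1,0,\dots,0}$ via Bézout's theorem applied to the rational map $\varphi\colon \PP(L^A) \dashrightarrow \PP(S^2(\bigwedge^2 A^*))$ defined by the $(2\times 2)$-minors of the multiplication table, and to analyse the excess intersection at the base locus of $\varphi$ using Lemma~\ref{l: 2 x 2 minors}.

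Suppose first that $\maxideal^2 \neq 0$. By Lemma~\ref{l:degree} the map $\varphi$ has degree~$1$, so $c^A_{0,n-1,0,\dots,0}$ equals the degree of its image. The $(2\times 2)$-minors are quadratic forms on $\PP(L^A) \cong \PP^{n-1}$, so pulling back $n-1$ generic hyperplanes from the target produces, by Bézout, a $0$-dimensional intersection of total length $2^{n-1}$ in $\PP^{n-1}$; this intersection decomposes into the scheme-theoretic contribution at the base locus $B$ of $\varphi$ and a residual $0$-cycle of length $c^A_{0,n-1,0,\dots,0}$. The base locus $B$ consists of rank $\leq 1$ matrices in $\PP(L^A)$; writing $M_\lambda(a,b) = \lambda(ab)$ for $\lambda \in A^*$, nilpotency of $\maxideal$ forces (after rescaling to $\lambda(1)=1$) the relation $\lambda(ab) = \lambda(a)\lambda(b)$, so $\lambda$ is a ring homomorphism $A \to \CC$. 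For local $A$ this homomorphism is unique (the residue field quotient), so $B$ is supported at a single point $p$. In the affine chart at $p$, Lemma~\ref{l: 2 x 2 minors} identifies the local ring $\mathcal{O}_{\PP^{n-1},p}/I_p$ with $A$, where $I_p$ is the local ideal of $B$; in particular the length of $B$ at $p$ equals $\dim_\CC A = n$.

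Let $Q_1, \dots, Q_{n-1}$ be generic elements of the linear span $W$ of the $(2\times 2)$-minors, and let $J \subseteq I_p$ be the ideal they generate locally at $p$. The surjection $\mathcal{O}_{\PP^{n-1},p}/J \twoheadrightarrow \mathcal{O}_{\PP^{n-1},p}/I_p \cong A$ yields $\dim_\CC \mathcal{O}_{\PP^{n-1},p}/J \geq n$, with equality iff $J = I_p$. Since $W$ generates $I_p$ as an ideal, the image of $W$ in the $\CC$-vector space $I_p/\maxideal_p I_p$ spans it, of dimension $\mu(I_p) \geq n-1$ (by Krull's height theorem, as $\mathcal{O}_{\PP^{n-1},p}$ is regular of dimension $n-1$). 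By Nakayama's lemma, generic $n-1$ elements of $W$ generate $I_p$ iff $\mu(I_p) = n-1$, which is exactly the condition that $B$ is a local complete intersection at $p$; under the identification $\mathcal{O}_{\PP^{n-1},p}/I_p \cong A$, independence of $R$ in the definition of a complete intersection shows this is equivalent to $A$ being a (local) complete intersection. Combining the Bézout identity with the bound $\dim \mathcal{O}/J \geq n$ gives
\[
    c^A_{0,n-1,0,\dots,0} = 2^{n-1} - \dim_\CC \mathcal{O}_{\PP^{n-1},p}/J \leq 2^{n-1} - n,
\]
with equality iff $A$ is a (local) complete intersection.

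It remains to treat $\maxideal^2 = 0$. Every $(2\times 2)$-minor then reduces to $-x_j x_l$ for $j, l \geq 1$ and depends only on $x_1, \dots, x_{n-1}$, so $\varphi$ factors through $[x_0 : \dots : x_{n-1}] \mapsto [x_1 : \dots : x_{n-1}]$, whose image has dimension at most $n-2$; hence $c^A_{0,n-1,0,\dots,0} = 0$. For $n = 2$ this matches $2^{n-1} - n = 0$ with $A = \CC[x]/(x^2)$ a complete intersection, whereas for $n \geq 3$ the inequality is strict and $\maxideal^2 = (y_1,\dots,y_{n-1})^2$ requires $\binom{n}{2} > n-1$ generators, so $A$ is not a complete intersection. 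The most delicate point in the main case is verifying that for generic $Q_i$ the ideal $J$ is $\maxideal_p$-primary (so that the length $\dim_\CC \mathcal{O}_{\PP^{n-1},p}/J$ is finite and coincides with the local Bézout contribution) and that the minimal-generation dichotomy really holds for the linear span $W$; both follow from the fact that $I_p$ is $\maxideal_p$-primary together with the Nakayama-type argument on $W \twoheadrightarrow I_p/\maxideal_p I_p$.
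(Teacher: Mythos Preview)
Your proof is correct and follows the same overall strategy as the paper: interpret $c^A_{0,n-1,0,\dots,0}$ as the degree of the image of the rational map given by $(2\times 2)$-minors, use Lemma~\ref{l:degree} to get $\deg\varphi = 1$, apply B\'ezout to $n-1$ generic quadrics to obtain $2^{n-1}$ total points, and subtract the contribution of the base locus, identified with $A$ via Lemma~\ref{l: 2 x 2 minors}.

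The one substantive difference is how the equality case is handled. The paper packages the base-locus contribution as the ``algebraic multiplicity'' $e(B)$ and invokes an external reference for the inequality $e(B)\ge \deg B$ with equality iff $B$ is a complete intersection. You instead argue directly: you show $B$ is supported at a single point $p$ (via the observation that a rank-one $M_\lambda$ forces $\lambda$ to be the unique residue-field homomorphism), identify $\mathcal{O}_p/I_p\cong A$, and then use Nakayama on the surjection $W\twoheadrightarrow I_p/\maxideal_pI_p$ to show that generic $n-1$ elements of $W$ generate $I_p$ precisely when $\mu(I_p)=n-1$. This makes the argument entirely self-contained and clarifies exactly where the complete-intersection condition enters. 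Your treatment of the degenerate case $\maxideal^2=0$ is also slightly more explicit than the paper's appeal to Example~\ref{e:trivial}, and correctly separates the $n=2$ boundary case.
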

\begin{proof} 
    First assume that $A$ has non-trivial multiplication.
    Let $B$ denote the base locus of $(2 \times 2)$-minors of the multiplication table of $A$ and let $e(B)$ be the algebraic multiplicity of $B$. As noted in the proof of Lemma \ref{l:degree}, the set of $(2 \times 2)$-minors of the multiplication table contains equations $x_1 x_{n-1}, x_2x_{n-1}, \dots, x_{n-1}^2$, the hypersurfaces defined by these equations are transversal, so by B{\'e}zout's theorem, a system of $n-1$ general linear combinations of $(2 \times 2)$-minors of $T^A$ has $2^{n-1}$ solutions. The number of points outside of $B$ is exactly $[X_A] \cdot L_2^{n-1}$, because the image in $\PP (S^2 (\bigwedge^2 A^*))$ is parameterized by $(2 \times 2)$-minors of the multiplication table of $A$ which do not vanish simultaneously. Therefore by Lemma \ref{l:degree}
    \[
        [X_A] \cdot L_2^{n-1} 
        = \frac{2^{n-1} - e(B)}{\deg \varphi}
        = 2^{n-1} - e(B)
        \leq 2^{n-1} - \deg B
    \]
    and the equality holds if and only if the ideal $I$ of $B$ is a complete intersection, see \cite[Section 6]{gesmundo2023collineationvarietiestensors}.
    By Lemma \ref{l: 2 x 2 minors} we get that $I$ satisfies $\Sym A^* / I \cong A$, so $\deg B = \dim_\CC A = n$, which gives the inequality $[X_A] \cdot L_2^{n-1} \leq 2^{n-1} - n$. Being a complete intersection is independent of the embedding, so the equality holds if and only if $A$ is a complete intersection.

    If $A = \CC[x_1, \dots, x_{n-1}] / (x_1, \dots, x_{n-1})^2$, then by Example \ref{e:trivial} we have $[X_A] \cdot L_2^{n-1} = 0$, which is consistent with the fact that $A$ is not a complete intersection. 
\end{proof}

\section{Calculation of characteristic numbers}

\subsection{Recursive formula for characteristic sequences}
In this section we derive a formula expressing characteristic sequences of a direct sum of subspaces of symmetric matrices in terms of characteristic sequences of direct summands, under a technical assumption is that both subspaces contain invertible matrices. This applies in particular to subspaces coming from multiplication in Gorenstein algebras (see Lemma~\ref{l:gorenstein_tensor}),
so it reduces the problem of calculation characteristic sequences of Gorenstein algebras to calculating them for
the local ones.

\begin{definition}
    Let $V$ be a finite dimensional vector space. For any subspace $L \subset S^2 V$ of dimension $n+1$ containing an invertible matrix, let $Y_L$ be the graph of the rational map
    \[
        \grad \det \colon \PP(S^2 V) \dashrightarrow \PP (S^2V^*) \cong \mathbb{P}\left(S^2\left(\bigwedge^{n-1}V\right)\right),
    \]
    restricted to $\mathbb{P}(L)$. On the open subset of invertible matrices $U_L \subset \mathbb{P}(L)$ the map $\grad \det$ sends $[A] \mapsto [A^{-1}]$, as in Example \ref{e:grad}.
\end{definition}
Alternatively, $Y_L$ can be described as the image 
of $X_L \subset \mathcal{CQ}(V)$ under projecting to $\PP(S^2 V) \times \PP(S^2V^*)$. We denote the characteristic sequence of $L$ by
\[ \mu^L_k = c^L_{n-k,0,\dots,k} \text{ for } k=0,\dots,n . \]
They are equal to the coefficients of $[\PP^{n-k} \times \PP^k]$ in the expansion in the Chow ring of the class
\[[Y_L] \in \CH(\PP(S^2 V) \times \PP(S^2V^*)). \]
We sometimes omit $L$ from the notation if it is clear from the context.\\

Suppose we have two such $V_i$ and $L_i \subset S^2 V_i$.
Our aim is to describe the class in the Chow ring
\[
    [Y_{L_1 \oplus L_2}] \in \CH(\PP(S^2 V_1 \oplus S^2V_2) \times \PP(S^2 V_1^* \oplus S^2V_2^*))
\]
in terms of the classes
\[
    [Y_{L_i}] \in \CH(\PP(S^2 V_i) \times \PP(S^2V_i^*)).
\]
Let us introduce a notion that, in our setting, serves as a right analogue of the join of two subvarieties in a projective space.
\begin{definition}
    For $i=1,2$ let $X_i \subset \PP (S^2V_i) \times \PP (S^2 V_i^*)$ be subvarieties. We define their join
    \[J(X_1, X_2) \subset \PP(S^2 V_1 \oplus S^2V_2) \times \PP(S^2 V_1^* \oplus S^2V_2^*)\]
    to be the closure of the locus of points $([v_1:v_2],[w_1:w_2])$ such that 
    $(v_1,w_1) \in X_1$ and $(v_2, w_2)\in X_2$.
\end{definition}

\begin{lemma}\label{invariant_under_rational_equivalence}
    Let $V_i, X_i$ be as above and let $X_i' \subset \PP (S^2V_i) \times \PP (S^2 V_i^*)$ be any closed subvarieties such that $[X_i] = [X_i']$ in the Chow ring $\CH(\PP (S^2V_i) \times \PP (S^2 V_i^*))$. Then $[J(X_1, X_2)] = [J(X_1', X_2')]$ in the Chow ring $\CH(\PP(S^2 V_1 \oplus S^2V_2) \times \PP(S^2 V_1^* \oplus S^2V_2^*))$.
\end{lemma}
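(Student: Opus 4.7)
\emph{Proof plan.} The plan is to write the join construction as the composition of a flat pullback and a proper birational pushforward, both of which preserve rational equivalence, so that $[J(X_1, X_2)]$ is manifestly determined by the classes $[X_1]$ and $[X_2]$. To make this precise, set
\[
B := \PP(S^2V_1) \times \PP(S^2V_1^*) \times \PP(S^2V_2) \times \PP(S^2V_2^*).
\]
On the factor $\PP(S^2V_1) \times \PP(S^2V_2)$, let $\cE := \cO(-1) \boxtimes \cO \oplus \cO \boxtimes \cO(-1)$ be the exterior sum of the two tautological line bundles, and define $\cE'$ analogously on $\PP(S^2V_1^*) \times \PP(S^2V_2^*)$. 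Let
\[
\widetilde P := \PP(\cE) \times \PP(\cE') \xrightarrow{\ \pi\ } B
\]
be the resulting $\PP^1 \times \PP^1$-bundle, so that $\pi$ is flat. The canonical inclusion $\cE \hookrightarrow (S^2V_1 \oplus S^2V_2) \otimes \cO$ induces a morphism $g_1 \colon \PP(\cE) \to \PP(S^2V_1 \oplus S^2V_2)$ and analogously $g_2$ on the dual side, and I set $g := g_1 \times g_2$. A direct check shows $g$ is proper and restricts to an isomorphism on the open subset where all four components $v_1, v_2, w_1, w_2$ are nonzero.

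Next I would show that $g$ restricts to a birational morphism from $\pi^{-1}(X_1 \times X_2)$ onto $J(X_1, X_2)$. The preimage $\pi^{-1}(X_1 \times X_2)$ is a $\PP^1 \times \PP^1$-bundle over the irreducible variety $X_1 \times X_2$, so it is irreducible of dimension $\dim X_1 + \dim X_2 + 2$. On the open locus where all components are nonzero, $g$ sends $\pi^{-1}(X_1 \times X_2)$ bijectively onto the dense open subset
\[
\{([v_1:v_2],[w_1:w_2]) : ([v_1],[w_1]) \in X_1,\ ([v_2],[w_2]) \in X_2,\ v_1, v_2, w_1, w_2 \neq 0\}
\]
of $J(X_1, X_2)$. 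Since $g$ is proper and $J(X_1, X_2)$ is by definition the closure of this set, the restriction $g|_{\pi^{-1}(X_1 \times X_2)}$ is surjective, generically one-to-one, and both source and target have the same dimension. Consequently $g_*[\pi^{-1}(X_1 \times X_2)] = [J(X_1, X_2)]$.

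Combining these observations with the flatness of $\pi$ and the well-definedness of the Chow exterior product, I obtain
\[
[J(X_1, X_2)] = g_* \pi^*([X_1] \boxtimes [X_2]) = g_* \pi^*([X_1'] \boxtimes [X_2']) = [J(X_1', X_2')],
\]
which is the desired identity. The main delicate point will be verifying the birationality of $g$ restricted to $\pi^{-1}(X_1 \times X_2)$ and ensuring its image fills out the entire closure $J(X_1, X_2)$ rather than a proper subvariety; this is also the reason for introducing the projective bundle $\widetilde P$ in place of a naive parameterization by $X_1 \times X_2 \times \PP^1 \times \PP^1$, which does not carry a morphism to the target.
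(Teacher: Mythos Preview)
Your argument is correct and takes a genuinely different route from the paper. The paper proves the lemma by hand-building a rational equivalence: given a family $U \subset \PP(S^2V_1)\times\PP(S^2V_1^*)\times\PP^1$ witnessing $[X_1]=[X_1']$, it forms the fibrewise join with each irreducible component of $X_2$ to produce a family over $\PP^1$ witnessing $[J(X_1,X_2)]=[J(X_1',X_2)]$, and then repeats in the second variable. Your approach is instead functorial: you realise $[J(X_1,X_2)]$ as $g_*\pi^*([X_1]\boxtimes[X_2])$ for a flat $\pi$ and a proper birational $g$, so invariance under rational equivalence is automatic from the standard compatibilities of $\pi^*$, $g_*$, and the exterior product. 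This is cleaner and in fact already contains the content of the paper's subsequent Lemma~\ref{l:join_formula}, since the projective bundle formula applied to $g_*\pi^*$ would compute $[J(Y_{L_1},Y_{L_2})]$ directly without first moving to unions of linear spaces.

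One small point to tighten: your last displayed equality $g_*\pi^*([X_1']\boxtimes[X_2'])=[J(X_1',X_2')]$ invokes the same birationality argument you gave for $X_1,X_2$, but that argument used irreducibility of $X_1\times X_2$. The $X_i'$ are arbitrary closed subvarieties and need not be irreducible. This is easily handled---all irreducible components of $X_i'$ have dimension $\dim X_i$ since $[X_i']=[X_i]$ is a pure-dimensional class with non-negative coefficients, the join distributes over finite unions, and your birationality argument applies componentwise---but you should say so explicitly.
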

\begin{proof}
    Let $ \{(X_2)_j\}_{j\in J}$ be the set of the irreducible components of $X_2$. Let
    \[ U \subset \PP (S^2V_1) \times \PP (S^2 V_1^*) \times \PP^1 \]
    be any closed subvariety witnessing the rational equivalence between $X_1$ and $X_1'$. Consider the projections
    \[ \pi_1: \PP (S^2V_1) \times \PP (S^2 V_1^*) \times \PP^1 \rightarrow \PP (S^2V_1) \times \PP (S^2 V_1^*) \text{ and } \pi_2:  \PP (S^2V_1) \times \PP (S^2 V_1^*) \times \PP^1 \rightarrow  \PP^1.\]
    For each $j\in J$, let
    \[ U_j \subset \PP(S^2 V_1 \oplus S^2V_2) \times \PP(S^2 V_1^* \oplus S^2V_2^*) \times \PP^1 \]
    be the subvariety defined as
    \[\bigcup_{s\in \PP^1} J\left(\pi_1(\pi_{2}^{-1}(s)\cap U),(X_2)_j\right) . \] 
    Both $U$ and $(X_2)_j$ are irreducible, so $U_j$ is irreducible and gives a rational equivalence between $J(X_1, (X_2)_j)$ and $J(X_1', (X_2)_j)$. Hence
    \[
        [J(X_1, X_2)]  = \sum_{j} [J(X_1, (X_2)_j)] = \sum_{j} [J(X_1', (X_2)_j)]  = [J(X_1', X_2)].
    \]
    By applying the same argument to $X_2$ and $X_2'$ we conclude that
    \[ 
        [J(X_1, X_2)] = [J(X_1', X_2)] = [J(X_1', X_2')]. \qedhere
    \]
\end{proof}
For $i=1,2$ let
\[ N_i := \dim \PP (S^2 V_i) = \dim \PP(S^2 V_i^*) \text{ and } N := \dim \PP(S^2 V_1 \oplus S^2V_2) = \dim \PP(S^2 V_1^* \oplus S^2V_2^*), \]
so $N = N_1 + N_2 +1$. Let
\[ n_i := \dim Y_{L_i} = \dim L_i -1 \text{ and } n := \dim Y_{L_1 \oplus L_2} = \dim (L_1 \oplus L_2) - 1, \]
so $n = n_1 + n_2 + 1$.
The varieties $Y_{L_i}$ are irreducible (see \ref{l:irreducible}) and have codimension $2N_i - n_i$.
Recall that there are isomorphisms
\[
    \CH(\PP (S^2V_i) \times \PP (S^2 V_i^*)) 
    = \ZZ[x_i, y_i]/(x_i^{N_i+1}, y_i^{N_i+1})
\]
and
\[
    \CH(\PP (S^2 V_1 \oplus S^2V_2) \times \PP (S^2 V_1^* \oplus S^2V_2^*)) 
    = \ZZ[x, y]/(x^{N+1}, y^{N+1}),
\]
where $x_i, y_i, x, y$ correspond to the classes of hyperplanes on the respective factors. Under this isomorphisms
\[
    \CH(\PP (S^2V_i) \times \PP (S^2 V_i^*)) 
    \ni [\PP^{k}\times \PP^l] \mapsto x_i^{N_i-k}y_i^{N_i-l} \in  \ZZ[x_i, y_i]/(x_i^{N_i+1}, y_i^{N_i+1})
\]
and
\[
    \CH(\PP (S^2 V_1 \oplus S^2V_2) \times \PP (S^2 V_1^* \oplus S^2V_2^*))\ni [\PP^{k}\times \PP^l] \mapsto x^{N-k}y^{N-l} \in \ZZ[x, y]/(x^{N+1}, y^{N+1}).
\]

The following lemma confirms that the join construction introduced above is a natural generalization of the classical case, in which the ambient space is a single projective space rather than a product of two.

\begin{lemma}\label{l:join_formula}
    For $i=1,2$ let
    \[
        [Y_{L_i}] = \sum_{k=0}^{n_i} \mu_k^i[\mathbb{P}^{n_i-k} \times \mathbb{P}^k]=\sum_{k=0}^{n_i} \mu^i_k x_i^{N_i - n_i + k} y_i^{N_i - k} \in \CH^{2N_i - n_i}(\PP (S^2V_i) \times \PP (S^2 V_i^*)).
    \]
    Then 
    \[
        [J(Y_{L_1}, Y_{L_2})] =\left(\sum_{k=0}^{n_1} \mu_k^1 x^{N_1 - n_1 + k} y^{N_1 - k}\right) \cdot \left(\sum_{l=0}^{n_2} \mu_l^2 x^{N_2 - n_2 + l} y^{N_2 - l}\right)
    \]
    in $\CH^{2N-n-1}(\PP (S^2 V_1 \oplus S^2V_2) \times \PP (S^2 V_1^* \oplus S^2V_2^*))$.
\end{lemma}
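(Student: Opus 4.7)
The plan is to reduce the calculation via Lemma \ref{invariant_under_rational_equivalence} to the case where each $Y_{L_i}$ is replaced by a cycle whose support consists of products of linear subspaces, and then compute the joins of such pieces directly. Since $[Y_{L_i}] = \sum_k \mu_k^i[\PP^{n_i-k}\times\PP^k]$, I would choose $Y_{L_i}'$ to be a disjoint union of $\mu_k^i$ generic copies of $\PP^{n_i-k}\times\PP^k$ for each $k$, so that $[Y_{L_i}] = [Y_{L_i}']$, and then apply Lemma \ref{invariant_under_rational_equivalence} to get $[J(Y_{L_1}, Y_{L_2})] = [J(Y_{L_1}', Y_{L_2}')]$. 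From the definition one has $J(A\sqcup B, C) = J(A, C)\cup J(B, C)$, and after arranging things generically this additivity descends to Chow classes in both arguments. This reduces the problem to computing $[J(\PP^{n_1-k}\times\PP^k,\ \PP^{n_2-l}\times\PP^l)]$ for each pair $(k, l)$ and summing with coefficient $\mu_k^1 \mu_l^2$.

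For the reduced computation, observe that the summands $\PP(S^2V_1)$ and $\PP(S^2V_2)$ embed as linearly disjoint projective subspaces inside $\PP(S^2V_1 \oplus S^2V_2)$, so the join of a linear $\PP^{a_1} \subset \PP(S^2V_1)$ and $\PP^{a_2}\subset \PP(S^2V_2)$ is a $\PP^{a_1+a_2+1}$, and likewise in the dual factor. Since for product-of-linear-subspaces $\PP^{a_i}\times \PP^{b_i}$ the vectors $v_i$ and $w_i$ in the defining relation of $J$ can be chosen independently, one obtains
\[
    J(\PP^{n_1-k}\times\PP^k,\ \PP^{n_2-l}\times\PP^l) = \PP^{(n_1-k)+(n_2-l)+1}\times \PP^{k+l+1},
\]
whose class in $\CH(\PP(S^2V_1\oplus S^2V_2)\times \PP(S^2V_1^*\oplus S^2V_2^*))$ is $x^{N-(n_1-k)-(n_2-l)-1}\,y^{N-k-l-1}$. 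Using $N = N_1+N_2+1$, the exponents split as $(N_1-n_1+k)+(N_2-n_2+l)$ and $(N_1-k)+(N_2-l)$, so this class equals $(x^{N_1-n_1+k}y^{N_1-k})(x^{N_2-n_2+l}y^{N_2-l})$. Multiplying by $\mu_k^1\mu_l^2$ and summing over $k$ and $l$ produces exactly the product of the two sums appearing in the statement.

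The step I expect to require the most care is the reduction, namely promoting Lemma \ref{invariant_under_rational_equivalence} to a bilinearity statement for $J$ on Chow classes. Lemma \ref{invariant_under_rational_equivalence} is stated for pairs of subvarieties, so one has to choose the generic copies of $\PP^{n_i-k}\times\PP^k$ making up $Y_{L_i}'$ carefully, so that the pairwise joins $J(\PP^{n_1-k}\times\PP^k,\ \PP^{n_2-l}\times\PP^l)$ across the $\mu_k^1 \mu_l^2$ component pairs are all irreducible and mutually distinct, ensuring that their Chow classes add with the correct multiplicities without unexpected incidences contributing to the intersection numbers. Once this bilinearity is secured, the remainder is the routine linear-subspace calculation carried out above.
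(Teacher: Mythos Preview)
Your proposal is correct and follows essentially the same approach as the paper: replace each $Y_{L_i}$ by a union of products of projective subspaces via Lemma~\ref{invariant_under_rational_equivalence}, invoke distributivity of the join over irreducible components, and then compute $J(\PP^{n_1-k}\times\PP^k,\ \PP^{n_2-l}\times\PP^l)=\PP^{n-(k+l)}\times\PP^{k+l+1}$ directly. The paper treats the bilinearity step you flag in a single sentence (``the class of the join is distributive over decomposing into irreducible components''), so your caution there is well-placed but not a divergence in strategy.
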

\begin{proof}
    By Lemma~\ref{invariant_under_rational_equivalence}, we can replace $Y_{L_i}$ by a union of products of projective subspaces, with
    multiplicities indicated by the coefficients in the Chow ring. The class of the join is distributive over decomposing into irreducible components, so it is enough to understand class of join of two products of projective spaces. The join of $\PP^{n_1 - k} \times \PP^{k} \subset \PP(S^2 V_1) \times \PP(S^2 V_1^*)$ and $\PP^{n_2 - l} \times \PP^{l} \subset \PP(S^2 V_2) \times \PP(S^2 V_2^*)$ is the projective subspace $\PP^{n_1 + n_2 - k - l + 1} \times \PP^{k+l + 1} = \PP^{n-(k+l)} \times \PP^{k+l+1}$. Thus,
    \[
    \begin{split}
        [J(\PP^{n_1 - k} \times \PP^{k}, \PP^{n_2 - l} \times \PP^{l})] 
        &= [\PP^{n-(k+l)} \times \PP^{k+l+1}] 
        = x^{N - n + (k+l)} y^{N-(k+l +1)}\\
        &= x^{N_1 - n_1 + k} y^{N_1 - k} \cdot x^{N_2 - n_2 + l}y^{N_2 - l},
    \end{split}
    \]
    which implies that
    \[
    \begin{split}
        [J(Y_{L_1}, Y_{L_2})] 
        &= \sum_{k,l} \mu_k^1 \mu_l^2[J(\PP^{n_1 - k} \times \PP^{k}, \PP^{n_2 - l} \times \PP^{l})]\\
        &= \sum_{k,l} \mu_k^1 \mu_l^2 x^{N_1 - n_1 + k} y^{N_1 - k} \cdot x^{N_2 - n_2 + l}y^{N_2 - l}\\
        &= \left(\sum_{k=0}^{n_1} \mu_k^1 x^{N_1 - n_1 + k} y^{N_1 - k}\right) \cdot \left(\sum_{l=0}^{n_2} \mu_l^2 x^{N_2 - n_2 + l} y^{N_2 - l}\right).
    \end{split}
    \]
\end{proof}


\begin{lemma}
\label{l:join}
    For $i=1,2$ let $V_1, V_2$ be finite dimensional vector spaces and let $L_i \subset S^2V_i$ be vector subspaces of dimension $n_i+1$. Assume furthermore that $L_1$ and $L_2$ contain invertible matrices. Then in the Chow ring of $\PP (S^2 V_1 \oplus S^2V_2) \times \PP (S^2 V_1^* \oplus S^2V_2^*)$ we have
    \[ [Y_{L_1 \oplus L_2}] = [J(Y_{L_1}, Y_{L_2})] \cdot (x + y). \]
\end{lemma}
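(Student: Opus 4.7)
The plan is to realize $Y_{L_1\oplus L_2}$ inside $J(Y_{L_1},Y_{L_2})$ as a Cartier divisor cut out by the restriction of an explicit global section of $\cO(x+y)$, and then to extract the claimed class identity from the projection formula. By Lemma~\ref{l:join_formula}, $\dim J(Y_{L_1},Y_{L_2})=n+1$, so once the set-theoretic containment is in place, $Y_{L_1\oplus L_2}$ will sit inside the join in the expected codimension one.

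First I would establish set-theoretically that $Y_{L_1\oplus L_2}\subseteq J(Y_{L_1},Y_{L_2})$ by using the block-diagonal adjugate identity $\adj(B_1\oplus B_2)=\det(B_2)\adj(B_1)\oplus\det(B_1)\adj(B_2)$. The generic point of $Y_{L_1\oplus L_2}$ reads $([B_1{:}B_2],[\det(B_2)\adj(B_1){:}\det(B_1)\adj(B_2)])$ and projects blockwise to $([B_i],[B_i^{-1}])\in Y_{L_i}$, placing it in the join; taking closures gives the inclusion.

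Second I would exhibit the defining section. Writing $d_i=\dim V_i$, consider
\[
\sigma(B,C):=d_2\operatorname{tr}(B_1 C_1)-d_1\operatorname{tr}(B_2 C_2),
\]
a global section of $\cO(x+y)$ on the ambient product. On a dense open subset of $Y_{L_1\oplus L_2}$ the identity $\operatorname{tr}(B_i\cdot\det(B_{3-i})\adj(B_i))=d_i\det(B_1)\det(B_2)$ forces $\sigma=0$. Fiberwise over the open locus of $\PP(L_1)\times\PP(L_2)$ where representatives $B_1^{(0)}$ and $B_2^{(0)}$ are invertible, the fiber of $J(Y_{L_1},Y_{L_2})$ is a $\PP^1\times\PP^1$ parametrized by $([sB_1^{(0)}{:}tB_2^{(0)}],[uB_1^{(0)-1}{:}vB_2^{(0)-1}])$, and $\sigma$ restricts to $d_1 d_2(su-tv)$. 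This $(1,1)$-form cuts out, with multiplicity one, the smooth $(1,1)$-curve $\{su=tv\}$; a direct evaluation of $\grad\det$ at $[\lambda_1 B_1^{(0)}\oplus\lambda_2 B_2^{(0)}]$ identifies this curve with the fiber of $Y_{L_1\oplus L_2}$, realized as the graph of the involution $[\lambda_1{:}\lambda_2]\mapsto[\lambda_2{:}\lambda_1]$.

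Combining this fiberwise reducedness with the irreducibility of $Y_{L_1\oplus L_2}$ from Lemma~\ref{l:irreducible} and the matching dimensions, we obtain $V(\sigma|_J)=Y_{L_1\oplus L_2}$ as Cartier divisors on $J(Y_{L_1},Y_{L_2})$. The projection formula applied to the inclusion $\iota\colon J(Y_{L_1},Y_{L_2})\hookrightarrow\PP(S^2V_1\oplus S^2V_2)\times\PP(S^2V_1^*\oplus S^2V_2^*)$ then yields
\[
[Y_{L_1\oplus L_2}]=\iota_*\bigl(c_1(\cO_J(x+y))\cap[J(Y_{L_1},Y_{L_2})]\bigr)=(x+y)\cdot[J(Y_{L_1},Y_{L_2})].
\]
The main obstacle I anticipate is excluding that $V(\sigma|_J)$ acquires extra divisorial components over the boundary of the generic $\PP^1\times\PP^1$-bundle locus---in particular over $\{\det B_1=0\}\cup\{\det B_2=0\}$ in the base, and over the strata $J\cap(\PP(S^2V_i)\times\PP(S^2V_i^*))$. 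I would address it either by a direct local analysis of $J$ along these boundaries or, more robustly, by observing that both classes are of pure codimension $2N-n$ and coincide on the open dense locus, so together with irreducibility of $Y_{L_1\oplus L_2}$ no excess component can contribute.
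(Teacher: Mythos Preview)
Your approach is essentially the paper's: realize $Y_{L_1\oplus L_2}$ inside the join as the zero locus of a bidegree $(1,1)$ section. The paper uses the section $f=\sum_j x_{1j}y_{j1}-\sum_l x'_{1l}y'_{l1}$ (the difference of the $(1,1)$-entries of $B_1C_1$ and $B_2C_2$) rather than your weighted trace $\sigma$, but either choice works and your fiberwise analysis over the invertible locus is correct.

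The substantive difference is in how the multiplicity is pinned down. You try to show directly that $V(\sigma|_J)=Y_{L_1\oplus L_2}$ as Cartier divisors, and you rightly flag the boundary as the obstacle. The paper bypasses this entirely: it only asserts a set-theoretic containment and an integer weight $w$ with
\[
[J(Y_{L_1},Y_{L_2})]\cdot(x+y)=w\cdot[Y_{L_1\oplus L_2}],
\]
then computes $w$ by multiplying both sides by $x^{n_1+n_2+1}$. The left-hand side becomes $[J]\cdot y\,x^{n_1+n_2+1}$, which by Lemma~\ref{l:join_formula} equals $\mu_0^1\mu_0^2=1$, while the right-hand side is $w\cdot c^{L_1\oplus L_2}_{n,0,\dots,0}=w$. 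This numerical trick gives $w=1$ without any local analysis.

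Your second proposed fix is not valid as stated. Equality of Chow classes on a dense open set does not imply equality in $\CH$ of the ambient, and irreducibility of $Y_{L_1\oplus L_2}$ says nothing about whether $V(\sigma|_J)$ acquires additional components supported over $\{\det B_1=0\}\cup\{\det B_2=0\}$ or over the loci where one block vanishes. What you would actually need is an argument that any such excess component contributes nonnegatively and that the total degree is already accounted for---which is precisely what the paper's weight computation delivers. I recommend replacing your boundary analysis with that computation.
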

\begin{proof}
    After a suitable choice of coordinates, the map taking an $n \times n$ matrix to the tuple of its $((n-1) \times (n-1))$-minors is given by $A \mapsto \adj A$. Thus we can choose coordinates
    \[ \{x_{ij}\}, \{x'_{kl}\} \text{ on } \PP(S^2V_1 \oplus S^2V_2)\ \text{ and }\ \{y_{ij}\}, \{y'_{kl}\} \text{ on  } \PP(S^2V_1^* \oplus S^2V_2^*) \]
    so that the map above is given by 
    \[
        [A_1, A_2] \mapsto [\text{adj } A_1 \cdot \det A_2 , \det A_1 \cdot \text{adj } A_2]
    \]
    By assumption $L_1, L_2$ contain invertible matrices, so the locus of invertible matrices is 
    an open and dense subset $U \subset \PP(L_1 \oplus L_2)$. On $\CC$-points the variety $Y_{L_1 \oplus L_2}$ is defined by the equations in $x_{ij}, y_{ij}$ corresponding to equations of $Y_{L_1}$, and the equations in 
    $x'_{kl}, y'_{kl}$ corresponding to equations of $Y_{L_2}$ and an additional equation $f := \sum_j x_{1j} y_{j1} - \sum_{l} x'_{1l} y'_{l1} = 0$. The two first sets of equations define the join 
    $J(Y_{L_1}, Y_{L_2})$ of the varieties $Y_{L_1}, Y_{L_2}$. The additional equation $f$ corresponds to the normalization of the two factors on the open set $U$. That is, on $\CC$-points the variety $Y_{L_1 \oplus L_2}$ is given by
    \[
    \begin{split}
        Y_{L_1 \oplus L_2} &= J(Y_{L_1}, Y_{L_2}) \cap V(f) .
    \end{split}
    \]
    The variety $Y_{L_1 \oplus L_2}$ is irreducible by Lemma~\ref{l:irreducible}, so by~\cite[Theorem 1.26]{3264}, or \cite[Chapter 7]{Fulton}, there 
    exists an integer weight $w$ such that 
    \[
        [Y_{L_1 \oplus L_2}] = w\cdot [J(Y_{L_1}, Y_{L_2})] \cdot [V(f)] = w\cdot  [J(Y_{L_1}, Y_{L_2})] \cdot (x+y) .
    \]
    Since the characteristic number $c_{d,0,\dots,0}^L$ is $1$ for any $d+1$-dimensional linear space $L$, multiplying both sides by $x^{n_1+n_2+1}$ we get 
    \[
        w = w \cdot x^{n_1 + n_2 + 1} \cdot [Y_{L_1 \oplus L_2}] 
        = [J(Y_{L_1}, Y_{L_2})] \cdot (x^{n_1+n_2+2} +yx^{n_1+n_2+1})
        = [J(Y_{L_1}, Y_{L_2})] \cdot y x^{n_1+n_2+1},
    \]
    where the last equality follows from the vanishing of all higher powers of $x$ in the Chow ring. The last product is equal to the coefficient of $y^{N_1+N_2}$ in $[J(Y_{L_1}, Y_{L_2})]$. This coefficient is equal to the product of the two characteristic numbers $\mu^1_0 = c_{n_1,0,\dots,0}^{L_1}$ and $\mu^2_0 = c_{n_2,0,\dots,0}^{L_2}$. As above, both are equal to $1$, thus
    \[ w=c_{n_1,0,\dots,0}^{L_1} \cdot c_{n_2,0,\dots,0}^{L_2} = 1. \qedhere \]
\end{proof}

We are now ready to prove the recursive formula.
\begin{proposition}
    Let $V_1, V_2$ be vector spaces and let $L_1 \subset S^2V_1, L_2 \subset S^2V_2$ be subspaces of dimensions $n_1+1,n_2+1$ respectively, both containing invertible matrices. Let $\{\mu_m^{L_1\oplus L_2}\}, \{\mu_k^1\}$, and $\{\mu_l^2\}$ denote the characteristic sequences of $L_1 \oplus L_2, L_1$, and $L_2$, respectively. The characteristic sequences satisfy the following identity:
    \[
        \mu_m^{L_1\oplus L_2} = \sum_{k + l + 1 = m} \mu_k^1 \mu_l^2 + \sum_{k + l = m} \mu_k^1 \mu_l^2.
    \]
    for $m=0,\dots,n_1+n_2+1$.
\end{proposition}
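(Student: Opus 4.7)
The plan is straightforward: combine the two preceding lemmas and match coefficients in the Chow ring. Essentially all the geometric content has been packaged into Lemma~\ref{l:join} (realizing $Y_{L_1\oplus L_2}$ as the join cut by a hyperplane) and Lemma~\ref{l:join_formula} (computing the class of the join as a product of two polynomials). What remains is purely bookkeeping.

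First I would substitute the expression from Lemma~\ref{l:join_formula} into the identity $[Y_{L_1\oplus L_2}] = [J(Y_{L_1}, Y_{L_2})] \cdot (x+y)$ from Lemma~\ref{l:join}. This gives
\[
[Y_{L_1 \oplus L_2}] = \sum_{k,l} \mu_k^1 \mu_l^2\, x^{N_1+N_2-n_1-n_2+k+l} y^{N_1+N_2-k-l} \cdot (x+y).
\]
Distributing $(x+y)$ produces two sums. Using $N = N_1+N_2+1$ and $n = n_1+n_2+1$, so that $N - n = N_1+N_2-n_1-n_2$, the first sum (multiplication by $x$) reindexes under $m = k+l+1$ as $\sum_{k+l+1=m}\mu_k^1\mu_l^2 \cdot x^{N-n+m}y^{N-m}$, while the second (multiplication by $y$) reindexes under $m = k+l$ as $\sum_{k+l=m}\mu_k^1\mu_l^2 \cdot x^{N-n+m}y^{N-m}$.

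Finally, I would compare the resulting expression with the expansion
\[
[Y_{L_1\oplus L_2}] = \sum_{m=0}^{n} \mu_m^{L_1\oplus L_2}\, x^{N-n+m} y^{N-m}
\]
in the Chow ring $\ZZ[x,y]/(x^{N+1}, y^{N+1})$. Since all the monomials $x^{N-n+m}y^{N-m}$ with $0 \leq m \leq n$ have non-negative exponents bounded by $N$, they are linearly independent in the Chow ring, so matching coefficients yields
\[
\mu_m^{L_1\oplus L_2} = \sum_{k+l+1 = m}\mu_k^1\mu_l^2 + \sum_{k+l=m}\mu_k^1\mu_l^2,
\]
as required.

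There is no real obstacle here: the hypothesis that $L_1$ and $L_2$ contain invertible matrices is precisely what is needed to invoke Lemma~\ref{l:join}, and the linear independence of the monomials $x^{N-n+m}y^{N-m}$ for $0 \le m \le n$ makes coefficient matching unambiguous. The only care required is the index bookkeeping to verify that $N - n + m$ and $N - m$ genuinely match on both sides of the two sums arising from the split $(x+y)$.
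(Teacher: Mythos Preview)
Your proposal is correct and follows essentially the same approach as the paper: both substitute Lemma~\ref{l:join_formula} into Lemma~\ref{l:join}, expand $(x+y)$ into two sums, and read off the coefficient of $x^{N-n+m}y^{N-m}$. The only cosmetic difference is that the paper extracts the coefficient by multiplying through by $x^{n-m}y^m$, whereas you invoke linear independence of the relevant monomials; these are equivalent.
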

\begin{proof}
    By combining Lemma~\ref{l:join} and Lemma~\ref{l:join_formula}, we get that
    \[
    \begin{split}
        [Y_{L_1 \oplus L_2}] 
        &= (x + y) \cdot \left(\sum_{k=0}^{n_1} \mu_k^1 x^{N_1 - n_1 + k} y^{N_1 - k}\right) \cdot \left(\sum_{l=0}^{n_2} \mu_l^2 x^{N_2 - n_2 + l} y^{N_2 - l}\right)\\
        &= \sum_{k=0}^{n_1} \sum_{l=0}^{n_2} \mu_k^1 \mu_l^2 x^{N - n + (k + l + 1)} y^{N - (k + l + 1)} + \sum_{k=0}^{n_1} \sum_{l=0}^{n_2} \mu_k^1 \mu_l^2 x^{N - n + (k + l)} y^{N - (k + l)}.
    \end{split}
    \]
    Thus
    \[
        \mu_m^{L_1\oplus L_2} = [Y_{L_1 \oplus L_2}] \cdot x^{n - m}y^m = \sum_{k + l + 1 = m} \mu_k^1 \mu_l^2 + \sum_{k + l = m} \mu_k^1 \mu_l^2. \qedhere
    \]
\end{proof}

\subsection{Characteristic numbers of explicit algebras}

Calculating characteristic numbers of explicit algebras is in general a difficult task.
In some cases we can give ad-hoc combinatorial arguments or utilize the geometry of the variety of complete quadrics. 

\subsubsection{The smooth algebra and $\CC[x]/(x^{n+1})$}\label{s:smooth}
Recall that an ($(n+1)$-dimensional) \emph{smooth algebra} is an algebra isomorphic to the product of $n+1$ copies of $\CC$. The characteristic numbers of the smooth algebra are known, but even in this seemingly simplest case the task of determining characteristic number was highly non-trivial. They corresponds to the mixed Eulerian numbers, which are an object of intensive study in combinatorics. We denote them by $e_{b_1, \dots, b_n}$. They are determined by the following relations \cite[Lemma 4.1]{mixedeuler}:
\begin{enumerate}
    \item $e_{1, 1, \dots, 1} = n!$,
    \item $2e_{a_1, \dots, a_n} = e_{a_1, \dots, a_{k-1}+1, a_k - 1, \dots, a_n} + e_{a_1, \dots, a_k - 1, a_{k+1} +1, \dots, a_n}$ if $1 < k < n$ and $a_k \geq 2$,
    \item $2e_{a_1, \dots, a_n} =  e_{a_1 - 1, a_2 + 1, a_3, \dots, a_n}$ if $a_1 \geq 2$,
    \item $2e_{a_1, \dots, a_n} = e_{a_1, \dots, a_{n-2}, a_{n-1}+1, a_n - 1}$ if $a_n \geq 2$.
\end{enumerate}
In particular, the characteristic sequence of the smooth algebra is 
\[
    \binom{n}{0}, \binom{n}{1}, \binom{n}{2}, \dots, \binom{n}{n-1}, \binom{n}{n}.
\]

In this section we prove that all characteristic numbers of the algebra $A_n = \mathbb{C}[x]/(x^{n+1})$ and the $(n+1)$-dimensional smooth algebra coincide. It is particularly interesting from the perspective of study of Hilbert schemes of points, as it shows that characteristic numbers are constant on $\operatorname{Hilb}_{n+1}(\mathbb{A}^1)$. \\

We start by showing that characteristic sequences of both algebras coincide. 
\begin{theorem}\label{(1,n)-numbers}
    For each $n \geq 0$, the characteristic sequences of $A_n = \CC[x]/(x^{n+1})$ and the smooth algebra $\prod_{i=1}^{n+1} \CC$ coincide and are given by
    \[
        \binom{n}{0}, \binom{n}{1}, \binom{n}{2}, \dots, \binom{n}{n-1}, \binom{n}{n}.
    \]
\end{theorem}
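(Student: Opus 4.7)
The characteristic sequence of the smooth algebra $\prod_{i=1}^{n+1}\CC$ equals $\binom{n}{0},\ldots,\binom{n}{n}$ by the classical computation of the projective degrees of the Cremona involution of $\PP^n$. Therefore the problem reduces to showing that the characteristic sequence of $A_n := \CC[x]/(x^{n+1})$ is also $\binom{n}{k}$. The plan is to realize the associated graph $Y_{L^{A_n}}$ (whose class in $\CH(\PP(S^2V)\times\PP(S^2V^*))$ encodes the characteristic sequence, by the discussion immediately preceding the theorem) as a single top-dimensional component of a complete intersection of $n$ bidegree-$(1,1)$ divisors inside a linear subspace $\PP^n\times\PP^n$, so that its Chow class equals $(h_1+h_2)^n=\sum_k \binom{n}{k}h_1^k h_2^{n-k}$.

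To set things up, fix the basis $1,x,\ldots,x^n$ of $V:=A_n$ together with the Gorenstein functional $\lambda_0(x^i)=\delta_{in}$. Each $f=\sum f_i x^i$ corresponds under the induced pairing to the Hankel matrix $H(f)\in L^{A_n}$ with $H(f)_{ij}=f_{n-i-j}$ for $i+j\leq n$ and $0$ otherwise, yielding $L:=L^{A_n}\cong V$. Decomposing $H(f)=J\cdot T(f)$, where $J$ is the anti-identity and $T(f)$ is the lower-triangular Toeplitz matrix of multiplication by $f$ in $A_n$, and using $J^{-1}=J$ and $T(f)^{-1}=T(f^{-1})$ for units, a direct calculation gives
\[
    \adj(H(f))=\det(J)\cdot\det(T(f))\cdot T(f^{-1})\cdot J \qquad (f \text{ a unit}),
\]
which is a Hankel matrix supported on the \emph{lower} anti-triangle whose entries encode $f^{-1}\in A_n$. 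Hence the gradient map $\varphi\colon\PP(L)\dashrightarrow\PP(S^2V^*)$ factors through a linear subspace $\PP(L^\vee)\cong\PP^n$, and on the open locus of units it becomes the involution $[f]\mapsto[f^{-1}]$.

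Inside $\PP(L)\times\PP(L^\vee)=\PP^n\times\PP^n$, the graph $Y_L$ is the closure of $\{([f],[g]):fg\in\CC^*\text{ in }A_n\}$. Writing $(fg)_k=\sum_{i+j=k}f_i g_j$, this condition becomes $(fg)_k=0$ for $k=1,\ldots,n$, so if we set $Z:=V((fg)_1,\ldots,(fg)_n)$, then $Y_L\subseteq Z$ and $Z$ is cut out by $n$ bihomogeneous equations of bidegree $(1,1)$. The key dimension analysis proceeds via the projection $Z\to\PP(L)$: the fiber over a unit is a single point, while the fiber over $f\in(x^m)\setminus(x^{m+1})$ (for $m\geq 1$) equals $\PP((x^{n+1-m}))=\PP^{m-1}$, so each boundary stratum has dimension $(n-m)+(m-1)=n-1$. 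Hence $Y_L$ is the unique $n$-dimensional irreducible component of $Z$. Moreover, at a general point $(f,f^{-1})$ with $f_0\neq 0$, the Jacobian of $((fg)_k)_{k=1}^n$ in the affine coordinates $(g_1,\ldots,g_n)$ is lower-triangular with $f_0$ on the diagonal, so $Z$ is smooth of dimension $n$ there and $Y_L$ appears in $Z$ with multiplicity one.

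Combining these two facts yields $[Y_L]=[Z]=(h_1+h_2)^n$ in $\CH^n(\PP^n\times\PP^n)$, and intersecting with $h_1^{n-k}h_2^k$ extracts $\mu_k=c^{A_n}_{n-k,0,\ldots,0,k}=\binom{n}{k}$, completing the proof. The main obstacle is the dimension-and-multiplicity analysis of $Z$: one must identify all irreducible components via the filtration $A_n\supset(x)\supset(x^2)\supset\cdots$ in order to isolate $Y_L$ as the unique top-dimensional component, and then separately verify transversality of the $n$ equations at a general point of $Y_L$. The preparatory identification $\adj H(f)=\det(J)\det(T(f))\cdot T(f^{-1})\cdot J$, which confines the image of $\varphi$ to a linear $\PP^n$, is what permits the entire problem to be reduced to this clean complete-intersection picture.
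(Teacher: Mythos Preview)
Your argument is correct and follows the same overall strategy as the paper: reduce the graph $Y_{L^{A_n}}$ to a subvariety of $\PP^n\times\PP^n$, exhibit $n$ bilinear equations cutting it out, verify transversality at a generic point, and read off the class $(h_1+h_2)^n$.

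The execution differs in a useful way. The paper computes the $(n\times n)$-minors of the Hankel matrix $M_n$ directly, observes that only $n+1$ are distinct, writes down explicit bilinear forms $f_k=\sum_{i+j=2n-k}(-1)^ix_iy_j$, and proves $V(f_1,\dots,f_n)=Y'_n$ by an inductive argument on the size of the algebra. You instead use the Gorenstein structure to identify the gradient map with inversion $[f]\mapsto[f^{-1}]$ in $A_n$, so the defining equations $(fg)_k=0$ come for free from the ring multiplication. Your fibre-dimension analysis over the strata $(x^m)\setminus(x^{m+1})$ then shows that the boundary of $Z$ has dimension $\leq n-1$, forcing $Y_L$ to be the unique top-dimensional component of $Z$; this is arguably cleaner than the paper's treatment, which simply asserts that $V(f_1,\dots,f_n)$ is irreducible without justification. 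Either way the conclusion is the same, and the two sets of bilinear equations agree up to the coordinate change implicit in your Hankel/Toeplitz decomposition $H(f)=J\cdot T(f)$.
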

\begin{proof}
Recall that the characteristic sequence coincides with the bidegree of the graph $Y_n$ of the map defined in Example \ref{e:grad}. The multiplication table in $A_n$ is given by the matrix
\[
    M_n = 
    \begin{pmatrix}
    x_0 & x_1 & ... & x_{n-1} & x_n \\
    x_1 & ... & x_{n-1} & x_n & 0 \\
    ... & ... & ... & 0 & ... \\
    x_{n-1} & x_n & 0 & ... & 0 \\
    x_n & 0 & ... & 0 & 0 
    \end{pmatrix}. 
\]
Firstly, we show that $M_n$ has only $n+1$ pairwise distinct minors of size $n$. Note that $M_n$ is symmetric and, since any size $n$ minor of $M_n$ is obtained by deleting one row and one column from $M_n$, we can assume that index of a row we delete is at most the index of a column we delete for any minor. We denote the minor obtained by deleting the last row and the last column by $y_0$. For convenience, we introduce the following notation:

\[
    B_{n,k} :=  \begin{pmatrix}
    x_k & x_{k+1} & ... & x_{n-2} & x_{n-1} \\
    x_{k+1} & ... & x_{n-2} & x_{n-1} & x_n \\
    ... & ... & ... & x_n & 0 \\
    x_{n-2} & x_{n-1} & x_n & 0 & 0 \\
    x_{n-1} & x_n & 0 & 0 & 0 
    \end{pmatrix}, 
    \quad B_{n,n}:=1.
\]
We see that $y_0 = \det B_{n,0}$. Since $x_n$ is the only non-zero variable in the last row, all other minors are of the form $x_n\cdot m_{n-1}$, where $m_{n-1}$ is the minor of size $n-1$ of the matrix $M_{n-1}$ with variables $x_1,\ldots, x_{n}$ corresponding to the algebra $A_{n-1} = \mathbb{C}[x]/(x^n)$. Continuing inductively, we see that every minor of $M_n$ of size $n$ is equal (up to a sign) to $x_n^{k}\det B_{n,k}$ for an suitable $k$. Also note that $x_n^{k}\det B_{n,k}$ is equal (up to a sign) to the minor $y_k$ of $M_n$ obtained by deleting $(n-i)$-th row and $n$-th column from $M_n$. 
Therefore, the bidegree of $Y_n$ is equal to the bidegree of the graph $Y_n'$ of the map 
\[
     \PP^n \ni [x_0:\ldots:x_n] \mapsto [y_0,\ldots, y_n] \in \PP^n,
\]
where $y_i$ are defined as above (because these maps differ by a linear embedding $\PP^n \to \PP(S^2 A_n)$). \\

For $1 \leq k \leq n$, define 
\[
    f_k = \sum_{\substack{0 \leq i,j \leq n \\ i+j=2n-k}}(-1)^ix_iy_j.
\]
We claim that $V(f_1,\ldots, f_n) = Y_n'$. We have $f_n \in I(Y_n')$, since on the open subset $\{x_n \neq 0\}$ it is the determinant of a degenerate matrix, as evident by replacing the last column of $M_n$ by $[x_0 \:x_1\:\ldots\:x_n]^{\top}$ and taking the Laplace expansion along this column. Now, we show that $(f_1,\ldots,f_{n-1}) \in I(Y_{n-1}')$ by induction on $n$. The case $n=1$ follows from the expansion above. For any $k$ we have 
\[
    f_k = \sum_{\substack{0 \leq i,j \leq n \\ i+j=2n-k}}(-1)^ix_iy_j = x_n^{n-k}\sum_{\substack{0 \leq i,j \leq n \\ i+j=2n-k}}(-1)^ix_n\frac{y_j}{x_n^{n-k}}
\]
and 
\[ 
    \frac{y_j}{x_n^{n-k}} = y_{j-(n-k)}^k,
\]
where $y_{j-(n-k)}^k$ is a corresponding minor $y_{j-(n-k)}$ for $M_k$ with variables $x_{n-k},\ldots, x_n$ instead of $x_0,\ldots, x_k$. Therefore, $f_k \in I(Y_k')$ follows from the $k$-th induction step. Therefore, $Y_n' \subseteq V(f_1,\ldots,f_n)$. Moreover, both $Y_n'$ and $V(f_1,\ldots,f_n)$ are irreducible and has the same codimension, so in fact $Y_n' = V(f_1,\ldots,f_n)$. \\

Recall that the Chow ring of $\PP^n \times \PP^n$ is isomorphic to
\[
    \CH(\PP^n \times \PP^n) \simeq \ZZ[x]/(x^{n}) \otimes \ZZ[y]/(y^{n}),
\]
where $x, y$ correspond to hyperplanes on the respective copies of $\PP^n$.
Each $f_k$ is a degree $(1,1)$ polynomials, so we have $[V(f_k)] = x+y$ in $\CH(\mathbb{P}^n \times \mathbb{P}^n)$. The divisors $V(f_1),\ldots, V(f_n)$ intersect transversally at a general point of $V(f_1,\ldots, f_n)  = \bigcap_{k=1}^n V(f_k)$ since they are transversal at $([0:\ldots:0:1],[0:\ldots:0:1]) \in X$, so 
\[
    [Y_n] = [Y_n'] = (x+y)^n = \sum_{k=0}^n\binom{n}{n-k}x^k y^{n-k},
\]
which shows that the $k$-term of the characteristic sequence for $A_n$ is $\binom{n}{k}$.
\end{proof} \label{4.4}

We prove that all the characteristic numbers of the smooth algebra and $A_n$ coincide. The idea of the proof is to interpret the calculation of characteristic numbers as calculations in the Chow ring of the variety of complete quadrics and use the relations between different bases of the group of divisors to obtain recursive formulas, which coincide with the recursive formulas determining the characteristic numbers of the smooth algebra. \\


We introduce the relative variety of complete quadrics. The main idea of this definition is to view the orbits of $\mathcal{CQ}(V) = \mathcal{LG}(V)/\mathbb{C}^*$ that break at the $k$-th Grassmannian $\mathrm{Gr}(k,V) \hookrightarrow \mathcal{CQ}(V)$ as a pair of compatible smaller orbits, one connecting the unique point of $\mathrm{Gr}(0,V)$ to a point $U$ of $\mathrm{Gr}(k,V)$ and another connecting $U$ to the unique point of $\mathrm{Gr}(n,V)$.

\begin{definition}
        Let $V$ be a $n$-dimensional vector space and consider the rational map
    \[
        (\Phi_{\mathcal{Q}})_k: \mathbb{P}(S^2V) \dashrightarrow \mathbb{P}(S^2V) \times \mathbb{P}\left(S^2\left(\bigwedge^2V\right)\right) \times \ldots \times \mathbb{P}\left(S^2\left(\bigwedge^{k}V\right)\right)
    \]
    which is defined to agree with $\Phi_{\mathcal{Q}}$ from Definition \ref{cq} on the first $k$ coordinates of the co-domain.
We define the subspace-relative variety of complete quadrics $\mathcal{CQ}_k(\mathrm{Gr}(k,V))$ as the closure of the image of the set of rank $k$ matrices under this map.    
\end{definition}

The space $\mathcal{CQ}_k(\mathrm{Gr}(k,V))$ is a fiber bundle over the Grassmannian $\mathrm{Gr}(k,V)$ with a natural morphism given by $$\varphi_k: \mathcal{CQ}_k(\mathrm{Gr}(k,V)) \ni [w_1 \wedge \ldots \wedge w_k] \mapsto \langle w_1,\ldots,w_k\rangle \in \mathrm{Gr}(k,V).$$ Note that $\varphi_k$ is well-defined on the open set of rank $k$ matrices, and, since $w_k$ is a rank $1$ tensor on this set, the same is true for the closure, hence $\varphi_k$ is well-defined. The fiber over $W \in \mathrm{Gr}(k,V)$ is isomorphic to $\mathcal{CQ}(W)$.

The interpretation of $\mathcal{CQ}(V)$ as the Chow quotient $\mathcal{LG}(V)/\mathbb{C}^*$ provides an alternative description of $\mathcal{CQ}_k(\mathrm{Gr}(k,V))$. Namely, the $\mathbb{C}$-points of $\mathcal{CQ}_k(\mathrm{Gr}(k,V))$ correspond to the orbits between $V \subset V\oplus V^{*}$ and $\mathrm{Gr}(k,V) \hookrightarrow \mathcal{CQ}(V)$. In other words, the space $\mathcal{CQ}_k(\mathrm{Gr}(k,V))$ parametrizes left parts (that is, the restrictions to the part between $V$ and $\mathrm{Gr}(k,V)$) of orbits in $\mathcal{CQ}(V)$  that break at $\mathrm{Gr}(k,V)$. 
Therefore, the image of an orbit $O$ under the projection $\varphi_k: \mathcal{CQ}_k(\mathrm{Gr}(k,V)) \to \mathrm{Gr}(k,V)$ is the intersection of $O$ with $\mathrm{Gr}(k,V)$.

Note that the isomorphism class of $\mathcal{CQ}_k(\mathrm{Gr}(k,V))$ does not depend on the choice of an $n$-dimensional space $V$, as any isomorphism $V \simeq W$ induces the isomorphism of the varieties above.




\begin{lemma}
    Let $V$ be an $n$-dimensional vector space. There is an isomorphism
    \[\mathcal{CQ}(V) \cap S_k \simeq \mathcal{CQ}_k(\mathrm{Gr}(k,V)) \times_{\mathrm{Gr}(k,V)} \mathcal{CQ}_{n-k}(\mathrm{Gr}(n-k,V^*)). \]
\end{lemma}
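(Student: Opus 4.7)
The plan is to use the Chow quotient description of $\mathcal{CQ}(V) = \mathcal{LG}(V)/\CC^*$ introduced in Section \ref{2.2}, interpreting points of $\mathcal{CQ}(V) \cap S_k$ as broken $\CC^*$-orbits in $\mathcal{LG}(V)$ whose union meets the image of $\mathrm{Gr}(k, V) \hookrightarrow \mathcal{LG}(V)$ at a (unique) point $W \oplus W^{\perp}$. This yields a tautological projection $\pi \colon \mathcal{CQ}(V) \cap S_k \to \mathrm{Gr}(k, V)$ sending such a broken orbit to the subspace $W$ at which it breaks. Under the identification $\mathrm{Gr}(k,V) \simeq \mathrm{Gr}(n-k, V^*)$ via $W \mapsto W^{\perp}= \mathrm{Ann}(W)$, the same projection factors through $\mathrm{Gr}(n-k, V^*)$.

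Next I would split each broken orbit into its left part, connecting the unique point of $\mathrm{Gr}(0,V)$ to $W \oplus W^{\perp}$, and its right part, connecting $W \oplus W^{\perp}$ to the unique point of $\mathrm{Gr}(n,V)$. By the parametrization of (closures of) unbroken orbits between subspaces recalled in Section \ref{2.2}, the left part moves through $\mathcal{CQ}(W/0) = \mathcal{CQ}(W)$, which is exactly the fiber of $\varphi_k \colon \mathcal{CQ}_k(\mathrm{Gr}(k,V)) \to \mathrm{Gr}(k,V)$ over $W$. Similarly, the right part moves through $\mathcal{CQ}(V/W)$, and the natural isomorphism $V/W \simeq (W^{\perp})^*$ together with the duality $\mathcal{CQ}(U) \simeq \mathcal{CQ}(U^*)$ identifies this with $\mathcal{CQ}(W^{\perp})$, i.e.~the fiber of $\varphi_{n-k}$ over $W^{\perp}$. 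Assembling both halves produces a morphism
\[
    \Psi \colon \mathcal{CQ}(V) \cap S_k \longrightarrow \mathcal{CQ}_k(\mathrm{Gr}(k, V)) \times_{\mathrm{Gr}(k, V)} \mathcal{CQ}_{n-k}(\mathrm{Gr}(n-k, V^*)),
\]
where the fiber product is taken along the isomorphism $\mathrm{Gr}(k,V) \simeq \mathrm{Gr}(n-k, V^*)$.

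The inverse is constructed by gluing. Given a compatible triple $(W, Q_1, Q_2)$ with $Q_1$ a point of $\mathcal{CQ}(W)$ and $Q_2$ a point of $\mathcal{CQ}(W^{\perp}) \simeq \mathcal{CQ}(V/W)$, the two broken orbits can be concatenated at $W \oplus W^{\perp}$ to give a single broken orbit in $\mathcal{LG}(V)$ whose union meets $\mathrm{Gr}(k,V)$, i.e.~a $\CC$-point of $\mathcal{CQ}(V) \cap S_k$. This construction is manifestly inverse to $\Psi$ on $\CC$-points, so $\Psi$ is a bijection on closed points.

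The main obstacle will be upgrading this $\CC$-point bijection to an isomorphism of varieties. The cleanest route is to use the explicit tensor description: on $S_k$ one can check that the component $A_k$ is a rank one symmetric tensor of the form $v \otimes v$ with $v = v_1 \wedge \cdots \wedge v_k$ decomposable (this holds on the dense open locus of $S_k$ coming from rank $k$ matrices, and hence on its closure), so $A_k$ canonically recovers $W = \langle v_1, \dots, v_k \rangle \in \mathrm{Gr}(k, V)$; the components $(A_1, \dots, A_{k-1})$ are supported on $\bigwedge^{\bullet} W$ and realize a point of $\mathcal{CQ}(W)$ over $W$, while $(A_{k+1}, \dots, A_{n-1})$ factor through the quotients $\bigwedge^{\bullet}(V/W)$ and realize a point of $\mathcal{CQ}(V/W) \simeq \mathcal{CQ}(W^{\perp})$. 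Packaging these identifications functorially shows that $\Psi$ is induced by a morphism of schemes, and comparing the defining equations of $\mathcal{CQ}_k(\mathrm{Gr}(k,V))$ and $\mathcal{CQ}_{n-k}(\mathrm{Gr}(n-k,V^*))$ with the local equations of $\mathcal{CQ}(V) \cap S_k$ (obtained from the iterated blow-up description of $\mathcal{CQ}(V)$ recalled in Section \ref{2.2}) finishes the argument.
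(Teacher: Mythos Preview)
Your conceptual framework---broken orbits, splitting into left and right parts at $W \oplus W^\perp$, identifying these with fibers of $\mathcal{CQ}_k(\mathrm{Gr}(k,V))$ and $\mathcal{CQ}_{n-k}(\mathrm{Gr}(n-k,V^*))$---matches the paper's intuition exactly. The difference lies in how the ``upgrade from $\CC$-point bijection to isomorphism of varieties'' is handled, and here the paper takes a much shorter route that sidesteps the obstacle you flag.

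Rather than constructing an explicit inverse and then ``comparing defining equations'' from the blow-up description (a step you leave vague and which would be genuinely laborious to carry out), the paper argues as follows. The projection of $\mathcal{CQ}(V)\cap S_k$ to the first $k$ coordinates $\PP(S^2V)\times\cdots\times\PP(S^2(\bigwedge^k V))$ lands in $\mathcal{CQ}_k(\mathrm{Gr}(k,V))$ by definition of the latter. The duality isomorphism $\alpha\colon \mathcal{CQ}(V)\to\mathcal{CQ}(V^*)$ (induced by $\mathcal{LG}(V)\cong\mathcal{LG}(V^*)$) sends $S_k$ to $S_{n-k}$ and interchanges the $j$-th coordinate with the $(n-j)$-th, so the projection to the last $n-k$ coordinates lands in $\mathcal{CQ}_{n-k}(\mathrm{Gr}(n-k,V^*))$. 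These two projections overlap at the $k$-th coordinate, which on $S_k$ is a rank-one tensor recording precisely the point of $\mathrm{Gr}(k,V)$; hence they assemble to a morphism into the fiber product which is automatically a closed embedding. Finally, a direct dimension count using the fiber-bundle structure on both factors shows the fiber product has dimension $\binom{n+1}{2}-2=\dim(\mathcal{CQ}(V)\cap S_k)$, and since both sides are irreducible this forces equality.

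So your proposal is not wrong, but the closing paragraph is where the actual work would lie, and you have not supplied it. The paper's dimension-count argument eliminates that work entirely; it is worth internalizing as the natural way to finish once you have a closed embedding between irreducible varieties.
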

\begin{proof}
    It follows from the definition of $\mathcal{CQ}_k(\mathrm{Gr}(k,V))$ that the projection of $\mathcal{CQ}(V) \cap S_k$ onto the first $k$ coordinates $\mathbb{P}(S^2V) \times \ldots \times \mathbb{P}\left(S^2\left(\bigwedge^{k}V\right)\right) $ is isomorphic to $\mathcal{CQ}_k(\mathrm{Gr}(k,V))$.
    There is an isomorphism $\mathcal{LG}(V) \rightarrow \mathcal{LG}(V^*)$ given by viewing a subspace $W \subset V \oplus V^*$ as the same subspace $W \subset V^* \oplus (V^*)^* \simeq V^* \oplus V$. This induces an isomorphism $\alpha: \mathcal{CQ}(V) \rightarrow \mathcal{CQ}(V^*)$. By the definition of $S_k$ as
    \[ S_k = \{(A_1, \dots, A_{n-1}) \in \mathcal{CQ}(V) \colon \mathrm{rk}(A_k) = 1\}, \]
    we see that $\alpha$ maps $\mathcal{CQ}(V) \cap S_k$ to $\mathcal{CQ}(V^*) \cap S_{n-k}$. Under $\alpha$ the coordinate $\mathbb{P}\left(S^2\left(\bigwedge^{j}V\right)\right)$ is interchanged with $\mathbb{P}\left(S^2\left(\bigwedge^{n-k}V^*\right)\right)$, so the image of $\mathcal{CQ}(V) \cap S_k$ under the projection to the last $n-k$ coordinates is isomorphic to
    \[ \mathcal{CQ}_{n-k}(\mathrm{Gr}(n-k, V^*)). \]
    Thus $\mathcal{CQ}(V) \cap S_k$ is a closed subvariety of the fibre product
    \[ \mathcal{CQ}_k(\mathrm{Gr}(k,V)) \times_{\mathrm{Gr}(k,V)} \mathcal{CQ}_{n-k}(\mathrm{Gr}(n-k,V^*)), \]
    with the map to $\mathrm{Gr}(k,V)$ given by the projection to the $k$-th coordinate on $\mathcal{CQ}_k(\mathrm{Gr}(k,V))$, and the projection to the $(n-k)$-th coordinate composed with an isomorphism $\mathrm{Gr}(n-k,V^*) \simeq \mathrm{Gr}(k,V)$ on $\mathcal{CQ}_k(\mathrm{Gr}(k,V))$. We see that
    \begin{gather*}
        \dim\left( \mathcal{CQ}_k(\mathrm{Gr}(k,V)) \times_{\mathrm{Gr}(k,V)} \mathcal{CQ}_{n-k}(\mathrm{Gr}(k,V))\right) \\
    = \dim(\mathcal{CQ}_k(\mathrm{Gr}(k,V)) + \dim(\mathcal{CQ}_{n-k}(\mathrm{Gr}(k,V)) - \dim(Gr(k,V))  \\
    = \left( \frac{k\cdot (k+1)}{2}-1 + k\cdot (n-k) \right) + \left( \frac{(n-k)\cdot (n-k+1)}{2} -1 + k\cdot (n-k) \right) - k\cdot (n-k) \\
    = \frac{n\cdot (n+1)}{2} -2 = \dim(\mathcal{CQ}(V)) -1 = \dim(\mathcal{CQ}(V) \cap S_k).
    \end{gather*}
    Thus the dimensions agree, so $\mathcal{CQ}(V) \cap S_k$ cannot be a proper closed subvariety.
\end{proof}
Recall that under the interpretation of $\mathcal{CQ}(V) \cap S_k$ consists of orbits of the $\mathbb{C}^*$-action on $\mathcal{LG}(V)$ that break at the $k$-th embedded Grassmannian $\mathrm{Gr}(k,V) \hookrightarrow \mathcal{CQ}(V)$, hence there is a natural map $\mathcal{CQ}(V) \cap S_k \to \mathrm{Gr}(k,V)$.

\begin{lemma}\label{translatingdivisors}
    Let $V$ be an $n$-dimensional vector space.
    Under the identification $\mathcal{CQ}(V) \cap S_k$ with $\mathcal{CQ}_k(\mathrm{Gr}(k,V)) \times_{\mathrm{Gr}(k,V)} \mathcal{CQ}_{n-k}(\mathrm{Gr}(k,V))$, the divisor $L_i$ on $\mathcal{CQ}(V)$ corresponds to $L_i$ on $\mathcal{CQ}_{k}(\mathrm{Gr}(k,V))$ for $i < k$ and  to $L_{i-k}$ on $\mathcal{CQ}_{n-k}(\mathrm{Gr}(k,V))$ for $i > k$.
\end{lemma}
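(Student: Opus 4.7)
My plan is to exploit the definition $L_i = \pi_i^*[H_i]$, where $\pi_i \colon \mathcal{CQ}(V) \to \mathbb{P}(S^2(\bigwedge^i V))$ is the $i$-th coordinate projection, together with the fiber product description from the previous lemma. Since pullback of line bundles is functorial, it suffices to show that the restriction of $\pi_i$ to $\mathcal{CQ}(V) \cap S_k$ factors through one of the two factors of the fiber product, with the induced map coinciding with the morphism defining the appropriate $L_j$ on that factor.

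For $i < k$, this factoring is immediate from the embedding of $\mathcal{CQ}_k(\mathrm{Gr}(k,V))$ into the product of the first $k$ coordinate projective spaces $\mathbb{P}(S^2 V) \times \cdots \times \mathbb{P}(S^2(\bigwedge^k V))$. The restriction of $\pi_i$ to $\mathcal{CQ}(V) \cap S_k$ decomposes as the projection onto the first factor of the fiber product followed by the $i$-th coordinate projection on $\mathcal{CQ}_k(\mathrm{Gr}(k,V))$, and the latter is precisely the morphism defining $L_i$ there. Functoriality of pullback then yields the desired identification.

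For $i > k$, I will reduce to the previous case by invoking the duality isomorphism $\alpha \colon \mathcal{CQ}(V) \to \mathcal{CQ}(V^*)$ that appeared in the proof of the preceding lemma. This isomorphism carries $S_k$ to $S_{n-k}$ and interchanges the $j$-th coordinate with the $(n-j)$-th, so $L_i$ corresponds to $L_{n-i}$ on $\mathcal{CQ}(V^*)$. Since $n-i < n-k$, applying the $i < k$ argument to $V^*$ (with $k$ replaced by $n-k$) shows that $L_{n-i}$, restricted to $\mathcal{CQ}(V^*) \cap S_{n-k}$, pulls back from $L_{n-i}$ on $\mathcal{CQ}_{n-k}(\mathrm{Gr}(n-k, V^*))$. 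The identification $\mathcal{CQ}_{n-k}(\mathrm{Gr}(n-k, V^*)) \simeq \mathcal{CQ}_{n-k}(\mathrm{Gr}(k, V))$ used in the previous lemma is induced by $W^\perp \leftrightarrow W$ on the base and by the fiberwise chain $\mathcal{CQ}(W^\perp) \simeq \mathcal{CQ}((V/W)^*) \simeq \mathcal{CQ}(V/W)$; the fiberwise duality swaps the $j$-th and $((n-k)-j)$-th coordinates, so $L_{n-i}$ is carried to $L_{(n-k)-(n-i)} = L_{i-k}$ on $\mathcal{CQ}_{n-k}(\mathrm{Gr}(k, V))$, as required.

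The main subtlety lies in the second case, where two distinct dualities interact—the global swap $\alpha$ and the fiberwise duality $\mathcal{CQ}(U) \simeq \mathcal{CQ}(U^*)$—and careful bookkeeping is needed to verify that the composite index transformation is precisely $i \mapsto i-k$. Once these identifications are fixed, the entire argument reduces to the functoriality of pullback and the compatibility of the coordinate projections with the fiber product decomposition.
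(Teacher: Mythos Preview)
Your proposal is correct and follows essentially the same approach as the paper: the $i<k$ case is handled by the very definition of $\mathcal{CQ}_k(\mathrm{Gr}(k,V))$ as the projection to the first $k$ coordinates, and the $i>k$ case is obtained by applying duality twice---the global isomorphism $\alpha\colon\mathcal{CQ}(V)\to\mathcal{CQ}(V^*)$ and then the fiberwise duality $\mathcal{CQ}((V/W)^*)\simeq\mathcal{CQ}(V/W)$---yielding exactly the index computation $L_{(n-k)-(n-i)}=L_{i-k}$ that appears in the paper. Your write-up simply makes explicit the two symmetries that the paper compresses into the phrase ``by symmetry (applied twice, once to the smaller and once to the bigger $\mathcal{CQ}$).''
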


\begin{proof}
    Recall that the isomorphism $\mathcal{CQ}(V) \cap S_k \simeq \mathcal{CQ}_k(\mathrm{Gr}(k,V)) \times_{\mathrm{Gr}(k,V)} \mathcal{CQ}_{n-k}(\mathrm{Gr}(k,V))$ is  given by the projection to the $k$ first coordinates and to the $n-k$ last coordinates.
    Thus, for $i < k$, the claim follows from the construction of $\mathcal{CQ}_k(\mathrm{Gr}(k,V))$ and definition of $L_i$.
    By symmetry (applied twice, once to the smaller and once to the bigger $\mathcal{CQ}$), we see that for $i > k$ the divisor $L_i$ on $\mathcal{CQ}(V)$ is identified with $L_{(n-k)-(n-i)}=L_{i-k}$ on $\mathcal{CQ}_{n-k}(\mathrm{Gr}(k,V))$.
\end{proof}

For an algebra $A$ of dimension $n+1$, we define the subvariety $X_A \subset \mathcal{CQ}(A^*)$. The characteristic number $c^A_{b_1, \dots, b_{n}}$ coincides with the element
\[
	[X_A] \cdot L_1^{b_1} \cdot L_2^{b_2} \cdot \dots \cdot L_{n}^{b_{n}} \in \CH^{n+1}(\mathcal{CQ}(A^*)) \cong \ZZ.
\]
To simplify the notation, we denote the variety $X_{A_n}$ as $X_n$ and the characteristic number $c^A_{b_1, \dots, b_{n-1}}$ as $c_{b_1, \dots, b_{n-1}}$. \\

\begin{lemma}\label{(n-1, 0, ..., 1, 0)}
    The characteristic number $c_{n-1, 0, \dots, 1, 0}$ is equal to $n-1$.
\end{lemma}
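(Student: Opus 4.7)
My plan is to interpret
\[
    c_{n-1, 0, \ldots, 1, 0} = [X_n] \cdot L_1^{n-1} \cdot L_{n-1}
\]
as the degree of an explicit rational map and then to verify that there is no base-locus contribution. Since $L_1$ is the pullback of a hyperplane class on $\PP(S^2 A_n^*)$ and $X_n$ is the strict transform of the linear subvariety $\PP(L^{A_n}) \cong \PP^n$, the class $[X_n] \cdot L_1^{n-1}$ is represented by the strict transform of a general line $\ell \subset \PP(L^{A_n})$. Parameterising $\ell$ by $[s:t] \mapsto sM + tN$ for general $M, N \in L^{A_n}$, the remaining factor $L_{n-1}$ counts the preimages of a general hyperplane under
\[
    \ell \xrightarrow{\pi_{n-1}} \PP\left(S^2\left(\bigwedge^{n-1} A_n^*\right)\right),
\]
which in coordinates sends $[s:t]$ to the tuple of all $((n-1)\times (n-1))$-minors of the matrix $sM + tN$.

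Each such minor is a homogeneous polynomial of degree $n-1$ in $(s,t)$. By Bertini a general hyperplane pulls back to exactly $n-1$ points of $\ell$, provided these polynomials share no common factor. A common factor would force all minors to vanish simultaneously at some point of $\ell$, that is, it would force $\ell$ to meet the rank $\leq n-2$ locus $R_{n-2} \subset \PP(L^{A_n})$. So the entire problem reduces to showing $\operatorname{codim} R_{n-2} \geq 2$ in $\PP^n$, which is automatic for a general line to avoid.

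For this codimension bound I would use the explicit form of $M_n$ recalled in the proof of Theorem \ref{(1,n)-numbers}: an anti-diagonal expansion immediately gives $\det M_n = \pm x_n^{n+1}$, so the rank-deficient locus in $\PP(L^{A_n})$ is just the hyperplane $\{x_n = 0\}$. On this hyperplane the last row and column of $M_n$ vanish, and the remaining block is the analogous Hankel matrix $M_{n-1}$ in the variables $x_0, \ldots, x_{n-1}$, whose determinant is in turn $\pm x_{n-1}^n$. Hence $R_{n-1} = \{x_n = x_{n-1} = 0\}$ has codimension $2$ in $\PP(L^{A_n})$, and $R_{n-2} \subseteq R_{n-1}$ satisfies the required bound. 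A general line therefore avoids $R_{n-2}$, the minors have no common factor, and the intersection number equals $n-1$.

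The one subtlety I would want to verify carefully is that the strict transform of $\ell$ inside $\mathcal{CQ}(A_n^*)$ coincides with $\ell$ itself, so that the intersection-theoretic count really reduces to the classical degree computed above. This is immediate from the rank stratification just described: the blow-up centres in the construction of $\mathcal{CQ}(A_n^*)$ are the strict transforms of the rank $\leq k$ loci with $k \leq n-1$, all of which meet $\PP(L^{A_n})$ in codimension at least $2$ by the same reasoning as for $R_{n-2}$, and so a general line $\ell$ misses each of them.
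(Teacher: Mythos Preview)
Your proof is correct and follows the same overall strategy as the paper: cut $X_n$ by $n-1$ general $L_1$-hyperplanes to obtain a general line $\ell \subset \PP(L^{A_n})$, then compute the degree of $\pi_{n-1}|_\ell$. The difference lies only in how the degree $n-1$ is verified.

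The paper argues, by a (tersely stated) induction on $n$, that after substituting $x_0,\dots,x_{n-2}$ as general linear forms in $x_{n-1},x_n$, the $(n-1)\times(n-1)$ minors of $M_n$ span \emph{all} degree $n-1$ monomials in $x_{n-1},x_n$; hence $\pi_{n-1}|_\ell$ is literally the $(n-1)$-st Veronese embedding followed by a linear inclusion, and its degree is $n-1$. You instead prove only what is needed: that the minors have no common factor on $\ell$. Your route is to observe that a common factor would force $\ell$ to meet the rank $\leq n-2$ locus $R_{n-2}$, and then to bound $\operatorname{codim} R_{n-2}\ge 2$ via the nested Hankel structure $\det M_n=\pm x_n^{n+1}$, $\det M_{n-1}=\pm x_{n-1}^{n}$, giving $R_{n-1}=\{x_n=x_{n-1}=0\}$. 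This is cleaner and avoids the induction; it also makes explicit the reason the strict transform of $\ell$ in $\mathcal{CQ}(A_n^*)$ is isomorphic to $\ell$ (all blow-up centres meet $\PP(L^{A_n})$ in codimension $\ge 2$), a point the paper's proof passes over silently. The paper's approach, in exchange, yields the slightly stronger statement that the map is the Veronese rather than merely of degree $n-1$, though that extra information is not used elsewhere.
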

\begin{proof}
    Take $n-1$ hyperplanes given by general linear combinations of $x_0, x_1, \dots, x_n$, that is, $ a_{i0} x_0 + \dots + a_{in} x_n$ for $i=1,\dots,n-1$. We may assume that the matrix $[a_{ij}]_{1 \leq i\leq n-1,0\leq j \leq n-2}$ is invertible. On the intersection of those hyperplanes the variables $x_0, \dots, x_{n-2}$ can be expressed as general linear combinations of  $x_{n-1}, x_n$. Then by induction on $n$ we can see that on this intersection the $(n-1)$-minors of $M_n$ are general linear combinations of all degree $n-1$ monomials in $x_{n-1}, x_n$.
    Therefore the map $\PP^1 = \PP(S^2(A^*))|_H \to \PP(S^2(\bigwedge^{n-1} A^*))$, whose degree by definition coincides with the characteristic number $c_{n-1, 0, \dots, 1, 0}$, is the composition of the $(n-1)$-th Veronese embedding $v_{n-1}$ of $\PP^1$ with a linear embedding $\PP^{n-1} \to \PP(S^2(\bigwedge^{n-1} A^*))$. Thus the characteristic number $c_{n-1, 0, \dots, 1, 0}$ is equal to $n-1$, the degree of $v_{n-1}(\PP^1)$.
\end{proof}

\begin{lemma}\label{intersecting with S_k set-theorethically}
    The intersection of $X_{n+1} \cap S_k$ agrees with $X_k \times X_{n+1-k}$ on $\CC$-points.
\end{lemma}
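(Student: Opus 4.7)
The plan is to analyze $X_{n+1} \cap S_k$ via the fiber-product decomposition of $\mathcal{CQ}(V) \cap S_k$ provided by the preceding lemma, viewing $\mathcal{CQ}(V)$ as the Chow quotient of the Lagrangian Grassmannian and points of $X_{n+1} \cap S_k$ as broken $\CC^*$-orbits. I begin by identifying the rank-$k$ locus inside $L^{A_{n+1}}$: a direct computation using the Hankel structure of $M_{n+1} = (x_{i+j})$ (with $x_l = 0$ for $l > n+1$) shows that a matrix of $L^{A_{n+1}}$ has rank exactly $k$ iff $x_{n+1} = \dots = x_k = 0$ and $x_{k-1} \neq 0$. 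Hence the rank-$k$ locus is an open subset of the linear subspace of $L^{A_{n+1}}$ naturally isomorphic to the multiplication subspace of a truncated algebra of dimension $k$, and every such matrix has image equal to the fixed coordinate subspace $W = \langle e_0, \ldots, e_{k-1}\rangle \in \mathrm{Gr}(k,V)$.

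Using the decomposition $\mathcal{CQ}(V) \cap S_k \cong \mathcal{CQ}_k(\mathrm{Gr}(k,V)) \times_{\mathrm{Gr}(k,V)} \mathcal{CQ}_{n+2-k}(\mathrm{Gr}(k,V))$, the projection of $X_{n+1} \cap S_k$ to $\mathrm{Gr}(k,V)$ is constant (it always hits $W$), so the intersection sits in a single fiber $\mathcal{CQ}(W) \times \mathcal{CQ}(W^\perp)$. The two factors of this product correspond to the \emph{left} and \emph{right} pieces of a broken orbit: the left piece is an unbroken orbit in $\mathcal{LG}(W)$ from $W^*$ to $W$ (giving a point of $\mathcal{CQ}(W)$), and the right piece is an unbroken orbit in $\mathcal{LG}(W^\perp)$ from $(W^\perp)^*$ to $W^\perp$ (giving a point of $\mathcal{CQ}(W^\perp)$).

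For the left factor in $\mathcal{CQ}(W)$, I observe that the non-degenerate quadric induced by a rank-$k$ matrix $M$ on its image $W$ is exactly the upper-left $k \times k$ block of $M$, which coincides with the multiplication table of the truncated algebra $\CC[x]/(x^k)$. As $M$ varies over the rank-$k$ locus and we take the closure, the left factor traces out the corresponding $X$-variety for this truncation inside $\mathcal{CQ}(W)$. For the right factor in $\mathcal{CQ}(W^\perp)$, I would take a one-parameter family $M_t = M + tN$ with $N \in L^{A_{n+1}}$ representing a normal direction modulo the rank-$k$ locus, and analyze $\lim_{t \to 0} \Phi_{\mathcal{Q}}(M_t)$. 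Expanding $\bigwedge^l(M + tN)$ for $l > k$ in powers of $t$, the leading $t^{l-k}$-coefficient is controlled by the restriction of $N$ to $W^\perp$; after the substitution $\mu_j = \lambda_{j+k}$, which identifies the quotient of $L^{A_{n+1}}$ by the rank-$k$ locus with $L^{A_{n+1-k}}$, the right factor traces out $X_{n+1-k}$.

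The main obstacle lies in this last step: justifying rigorously that the limit of $\Phi_{\mathcal{Q}}(M + tN)$ covers all of $X_{n+1-k}$ in $\mathcal{CQ}(W^\perp)$ as $N$ varies, and that the left and right factors are parametrized independently so the intersection is genuinely a product. The dimension count for the two factors agrees with $\dim(X_{n+1} \cap S_k) = n$, which is consistent with a product decomposition; once surjectivity onto each factor and independence of the two parameter families are verified via the iterated blow-up construction of $\mathcal{CQ}(V)$, the set-theoretic identification on $\CC$-points follows.
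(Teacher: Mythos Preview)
Your setup mirrors the paper's proof closely: both use the Chow-quotient/broken-orbit description, both identify that the rank-$k$ locus in $L^{A_{n+1}}$ maps to a single point $W\in\mathrm{Gr}(k,V)$ so that the intersection lives in a single fibre $\mathcal{CQ}(W)\times\mathcal{CQ}(V/W)$, and both recognise the left factor as the $X$-variety for the truncated algebra via the upper-left $k\times k$ block. The divergence is entirely in the right factor, and this is exactly where you say the obstacle lies.

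The paper avoids your limit computation $M_t=M+tN$ altogether by a one-line symmetry argument that you are missing. The algebra $A_{n+1}$ is Gorenstein, so by Lemma~\ref{l:gorenstein_tensor} its structure tensor is symmetric; equivalently, the natural involution of $\mathcal{CQ}(V)$ induced by swapping $V\leftrightarrow V^*$ (which on $\mathcal{LG}(V)$ exchanges the two ends of every orbit and sends $S_k$ to $S_{(n+2)-k}$) preserves $X_{n+1}$. Under this involution the \emph{right} half of a broken orbit at $W$ becomes the \emph{left} half of an orbit breaking at the complementary Grassmannian, and your own left-factor analysis applied there yields $X_{n+1-k}$. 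This gives surjectivity onto the second factor and independence of the two halves for free, and the dimension count you already have then forces the set-theoretic equality. So your proof is not wrong in spirit, but it is incomplete precisely because you try to compute directly what the Gorenstein symmetry hands you for nothing.
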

\begin{proof}
    By construction of $\mathcal{CQ}(V)$ as the Chow quotient of $\mathcal{LG}(V)$, the $\CC$-points of $X_{n+1} \cap S_k$ correspond exactly to the broken orbits intersecting the $k$-th embedded Grassmannian. The only possible linear span of rank $k$ matrices in the subspace defined by our algebra is the coordinate subspace $V_k$ spanned by first $k$ basis vectors, so these orbits have to break at this point. It is enough to consider our claim on the open set of orbits containing this point. Such an orbit is given by a pair of orbits from the 0-point to $V_k$ and from $V_k$ to $V$. The first orbit corresponds exactly to a point in $X_k$, as the rank $\leq k$ matrices in the linear subspace are exactly those with coordinates $x_k,...,x_n$ equal to zero and the subspace corresponding to $X_k$ is exactly the subspace of rank $\leq k$ matrices in the subspace corresponding to $X_{n+1}$ under the inclusion $V_k \subset V$. The algebra $A_n$ is Gorenstein, so by interchanging $V$ with with $V^*$ we see by symmetry that the elements of the second orbit correspond exactly to the elements of $X_{n+1-k}$. Thus the $\CC$-points $X_{n+1} \cap S_k$ can be interpreted as $\CC$-points of $X_k \times X_{n+1-k}$. By dimensional reasons it cannot be a proper subvariety, which concludes the proof.
\end{proof}

\begin{lemma}\label{intersecting with S_k in Chow ring}
    Classes of $X_{n+1} \cap S_k$ and $X_k \times X_{n+1-k}$ are related by the following relation:
    \[
        [X_{n+1}] \cdot S_k = {n+1 \choose k} \cdot [X_{k} \times X_{n+1-k}].
    \]
\end{lemma}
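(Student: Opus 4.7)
By the preceding set-theoretic lemma, the intersection $X_{n+1}\cap S_k$ agrees with $X_k\times X_{n+1-k}$ on $\mathbb{C}$-points; since a product of irreducible varieties is irreducible (applying Lemma~\ref{l:irreducible} to each factor), there exists a positive integer $m$ such that
\[
    [X_{n+1}]\cdot S_k \;=\; m\cdot [X_k\times X_{n+1-k}]
\]
in the Chow ring. Here $m$ is the intersection multiplicity of $X_{n+1}$ and $S_k$ along this common component, and the task is to identify $m$ with $\binom{n+1}{k}$.

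The plan is to pin down $m$ by pairing both sides with a complementary product of hyperplane classes, reducing to an identity of integers in $\CH^{\dim\mathcal{CQ}(V)}(\mathcal{CQ}(V))\cong\mathbb{Z}$, and then computing each side independently. A convenient choice of test class is a pure product $L_1^{a}\cdot L_{n+1}^{b}$ with $a$ and $b$ chosen so that the total codimension matches. On the left, the Schubert relation $S_k=-L_{k-1}+2L_k-L_{k+1}$ turns the pairing into a signed combination of characteristic numbers of $A_{n+1}$; the choice of test class built from the extremal divisors $L_1$ and $L_{n+1}$ keeps these characteristic numbers structurally close to entries of the characteristic sequence $\mu_j^{A_{n+1}}=\binom{n+1}{j}$ computed in Theorem~\ref{(1,n)-numbers}.

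On the right, Lemma~\ref{translatingdivisors} shows that on $X_k\times X_{n+1-k}$ the divisor $L_i$ restricts to $L_i$ on the first factor when $i<k$ and to $L_{i-k}$ on the second factor when $i>k$. The pairing hence factors as a product of characteristic numbers of $A_k$ and $A_{n+1-k}$, both accessible by induction on the dimension of the algebra. Solving for $m$ and invoking Pascal's identity $\binom{n+1}{k}=\binom{n}{k}+\binom{n}{k-1}$ yields $m=\binom{n+1}{k}$.

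The hardest step is the bookkeeping on the left-hand side: the Schubert expansion of $S_k$ introduces characteristic numbers carrying a stray $L_{k-1}$, $L_k$, or $L_{k+1}$ in the middle, which lie outside the characteristic sequence and must be handled via the inductive hypothesis together with this very lemma applied at smaller $n$ and $k$; the whole argument therefore becomes a simultaneous induction on $n$ and $k$. An alternative approach that avoids this bookkeeping is a direct local computation of the intersection multiplicity at a generic $\mathbb{C}$-point of $X_k\times X_{n+1-k}$, using coordinates on $\mathcal{CQ}(V)$ coming from its realization as the Chow quotient $\mathcal{LG}(V)/\mathbb{C}^*$; in these coordinates $S_k$ is a coordinate hypersurface and the multiplicity is the length of a local Artinian quotient, into which $\binom{n+1}{k}$ enters naturally through the Hankel structure of the multiplication table of $A_{n+1}$.
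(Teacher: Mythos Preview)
Your setup is correct: irreducibility plus the set-theoretic identification gives $[X_{n+1}]\cdot S_k = m\,[X_k\times X_{n+1-k}]$ for some positive integer $m$, and pairing against a complementary monomial in the $L_i$'s is the right way to extract $m$. But from that point on the proposal is a sketch of two possible strategies, neither of which is carried out, and the first runs into a real obstruction that you name but do not resolve.

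Concretely: pairing with $L_1^{a}L_{n+1}^{b}$ and expanding $S_k=-L_{k-1}+2L_k-L_{k+1}$ produces characteristic numbers of $A_{n+1}$ of the form $c_{a,0,\dots,1,\dots,0,b}$ with a stray $1$ in position $k-1$, $k$, or $k+1$. These are not part of the characteristic sequence computed in Theorem~\ref{(1,n)-numbers}, and you give no mechanism to evaluate them. ``This very lemma at smaller $n$ and $k$'' does not help: that lemma computes products of the form $[X_{m}]\cdot S_j$, whereas what you need here are products $[X_{n+1}]\cdot L_j\cdot(\cdots)$ at the \emph{current} level $n+1$. No reduction to smaller parameters is visible, so the proposed simultaneous induction is not well-founded as stated. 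The second suggestion (a local length computation in Chow-quotient coordinates) is only gestured at.

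The paper avoids this obstruction by a different route. It first treats the single case $k=n$: pairing $[X_{n+1}]\cdot S_n$ with $L_1^{n-1}$ yields $2c_{n-1,0,\dots,0,1}-c_{n-1,0,\dots,1,0}$, and the second number is the one ``stray'' characteristic number actually computed beforehand (Lemma~\ref{(n-1, 0, ..., 1, 0)} gives it as $n-1$), so $w_{n,n}=n+1$ directly. Then, rather than attacking general $k$ head-on, it evaluates $[X_{n+1}]\cdot S_n S_{n-1}\cdots S_k$ in two orders---once using only the already-established top weights $w_{m,m}$, once using the unknown $w_{n,k}$ followed by top weights on the smaller factor---and equates the two answers to read off $w_{n,k}=\binom{n+1}{k}$. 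No characteristic numbers beyond the sequence and the single auxiliary one are ever needed. If you want to repair your argument, this is the missing idea: establish the top case $k=n$ first (which needs exactly one extra characteristic number), and then bootstrap by iterated intersection with the $S_j$'s rather than by direct expansion of $S_k$.
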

\begin{proof}
    Recall that each $X_i$ is irreducible by Lemma~\ref{l:irreducible} an $\mathcal{CQ}(V)$ is smooth variety. By Lemma \ref{intersecting with S_k set-theorethically} 
    combined with~\cite[Theorem 1.26]{3264} there exist weights $w_{nk}$ such that $[X_{n+1}] \cdot S_k = w_{nk}\cdot [X_{k} \times X_{n+1-k}]$.
    We first determine the value of $w_{nn}$. Using Lemma \ref{translatingdivisors} we get
    \[
    \begin{split}
        w_{nn} 
        &= w_{nn} \cdot [X_n \times X_1] L_1^{n-1}
        = [X_{n+1}] \cdot S_n \cdot L_1^{n-1} \\
        &= [X_{n+1}] \cdot (2 L_n - L_{n-1}) \cdot L_1^{n-1} 
        = 2 c_{n-1, 0, \dots, 0, 1} - c_{n-1, 0, \dots, 1, 0}.
    \end{split}
    \]
    Hence by Theorem \ref{(1,n)-numbers} and Lemma \ref{(n-1, 0, ..., 1, 0)} we obtain $w_{nn} = 2n - (n-1) = n +1$.\\
    Now we determine the values of all $w_{nk}$. By iterating the previous result we get that 
    \[
        [X_{n+1}] \cdot S_n \cdot S_{n-1} \cdot \dots \cdot S_k = \frac{(n+1)!}{k!} \cdot [X_k \times X_1 \times \dots \times X_1].
    \]
    By Lemma \ref{translatingdivisors} and the formula relating $S_i$ with $L_i$ we also get
    \[
    \begin{split}
        [X_{n+1}] \cdot S_k \cdot S_n \cdot S_{n-1} \cdot \dots \cdot S_{k+1}
        &= w_{nk} \cdot [X_{k} \times X_{n+1-k}] \cdot S_{n} \cdot S_{n-1} \cdot \dots \cdot S_{k+1} \\
        &= (n+1-k) \cdot w_{nk} \cdot [X_k \times X_{n-k} \times X_1] S_{n-1} \cdot \dots \cdot S_{k+1}
        = \dots \\
        &= (n+1-k)! \cdot w_{nk} \cdot [X_k \times X_1 \times \dots \times X_1],
    \end{split}
     \]
     so we conclude that
     \[
        w_{nk} = \frac{(n+1)}{k! (n+1-k)!} = {n+1 \choose k}. \qedhere
     \]
\end{proof}

\begin{theorem}\label{t:numbers}
    The characteristic numbers of the smooth algebra $\prod_{k=1}^{n+1} \CC$ and $A_n = \CC[x]/(x^{n+1})$ coincide.
\end{theorem}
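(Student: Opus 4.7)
The plan is to establish that the characteristic numbers $c^{A_n}_{b_1,\dots,b_n}$ satisfy the same four recurrences as the mixed Eulerian numbers $e_{b_1,\dots,b_n}$. Since Theorem~\ref{(1,n)-numbers} already gives $c^{A_n}_{n-k,0,\dots,0,k} = \binom{n}{k}$, and a direct induction shows that the mixed Eulerian recurrences together with these base values uniquely determine every $e_{b_1,\dots,b_n}$ (iteratively propagating from the sequence via the boundary recurrences and then filling in the remaining compositions by the interior recurrence), it suffices to verify the four recurrences on the $c^{A_n}$-side.

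The key algebraic input is the Schubert relation $2L_k = L_{k-1} + L_{k+1} + S_k$ in $\Pic_{\QQ}(\mathcal{CQ}(V))$, where we set $L_0 = L_{n+1} = 0$. For any composition $(b_1,\dots,b_n)$ of $n$ with $b_k \geq 2$, multiplying the relation by $[X_{n+1}] \cdot L_1^{b_1}\cdots L_k^{b_k-1}\cdots L_n^{b_n}$ yields
\[
    2\, c^{A_n}_{b_1,\dots,b_n} = c^{A_n}_{\dots,b_{k-1}+1,b_k-1,\dots} + c^{A_n}_{\dots,b_k-1,b_{k+1}+1,\dots} + [X_{n+1}] \cdot S_k \cdot L_1^{b_1}\cdots L_k^{b_k-1}\cdots L_n^{b_n}.
\]
The boundary cases $k=1$ and $k=n$ produce the corresponding boundary recurrences via the conventions $L_0 = L_{n+1} = 0$, so the whole problem reduces to showing that the trailing $S_k$-term vanishes.

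For the vanishing I will combine Lemma~\ref{intersecting with S_k in Chow ring}, rewriting $[X_{n+1}]\cdot S_k = \binom{n+1}{k}\,[X_k \times X_{n+1-k}]$, with Lemma~\ref{translatingdivisors}, which handles the restrictions of $L_i$ for $i \neq k$. The remaining ingredient is that $L_k$ itself restricts trivially to $X_k \times X_{n+1-k}$. Indeed, on the divisor $S_k \subset \mathcal{CQ}(V)$ the $k$-th projection factors as
\[
    \mathcal{CQ}(V)\cap S_k \to \mathrm{Gr}(k,V) \hookrightarrow \mathbb{P}\!\left(\bigwedge^k V\right) \hookrightarrow \mathbb{P}\!\left(S^2\bigwedge^k V\right),
\]
the first map sending a rank-$k$ symmetric matrix $\varphi$ to $\mathrm{im}(\varphi)$. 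A direct inspection of the anti-triangular Hankel matrix $M_n$ shows that every rank-$k$ matrix in $L^{A_n}$ has image equal to the fixed span of the first $k$ coordinate vectors of $V = A_n^*$, so the restriction $X_{n+1}\cap S_k \to \mathrm{Gr}(k,V)$ is constant and $L_k$ pulls back to zero. Since $b_k - 1 \geq 1$, this kills the $S_k$-term and produces the required recurrence.

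The main obstacle is the last vanishing step: one has to verify that the $k$-th projection of $\mathcal{CQ}(V)$ genuinely factors through $\mathrm{Gr}(k,V)$ along $S_k$, and that the Hankel stratification sends the entire rank-$k$ locus of $L^{A_n}$ to a single Grassmannian point. The fiber-product description $\mathcal{CQ}(V)\cap S_k \cong \mathcal{CQ}_k(\mathrm{Gr}(k,V)) \times_{\mathrm{Gr}(k,V)} \mathcal{CQ}_{n+1-k}(\mathrm{Gr}(n+1-k,V^*))$ from the preceding subsection packages the first half cleanly, while the explicit form of $M_n$ takes care of the second.
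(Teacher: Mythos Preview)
Your proposal is correct and takes essentially the same approach as the paper: both verify the mixed-Eulerian recurrences (2)--(4) via $S_k = 2L_k - L_{k-1} - L_{k+1}$, combining Lemmas~\ref{intersecting with S_k in Chow ring} and~\ref{translatingdivisors} with the observation that the rank-$k$ locus of the Hankel pencil maps to a single point of $\mathrm{Gr}(k,V)$, so the $S_k$-term vanishes whenever $b_k\geq 1$. The only difference is the normalizing condition: the paper computes $c_{1,\dots,1}=n!$ directly by expanding $L_1$ in the $\{S_k\}$ basis, whereas you use the characteristic sequence from Theorem~\ref{(1,n)-numbers}; both serve to pin down the one-dimensional solution space of the homogeneous recurrences (2)--(4).
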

\begin{proof}
    Let $b_1, \dots, b_n$ be non-negative integers satisfying $\sum_{i=1}^n b_i = n-1$. Lemmas \ref{intersecting with S_k in Chow ring} and \ref{translatingdivisors} imply that
    \[
        [X_{n+1}] \cdot S_k \cdot L_1^{b_1} \cdot \dots \cdot L_{n}^{b_{n}} = 
        \begin{cases}
            {n+1 \choose k} \cdot c_{b_1, \dots, b_{k-1}} \cdot c_{b_{k+1}, \dots, b_{n}} & \quad\text{if } b_k = 0 \text{ and } \sum_{i=1}^{k-1} b_i = k-1 \\
            0 & \quad \text{otherwise} \\
        \end{cases}
    \]
    Recall that $e_{a_1, \dots, a_n}$ denote the mixed Eulerian numbers, that is, the characteristic numbers of the smooth algebra. We show that $c_{a_1, \dots, a_n}$ satisfy the recursive relations uniquely determining $e_{a_1, \dots, a_{n}}$.
    We proceed by induction, for a related approach see \cite[Proposition 4.4]{LLT}. For $n = 0$ we have $c_{1^0} = 1 = 0!$ by direct inspection. Write $L_1$ as 
    \[
    	L_1 = \sum_k \lambda_k S_k.
    \] 
    Note that $\lambda_k$ form the first row of the matrix
    \[
    \begin{bmatrix}
        2 & - 1 & 0 & 0 & \dots & 0 & 0 \\
        -1 & 2 & -1 & 0 & \dots & 0 & 0 \\
        0 & -1 & 2 & -1 & \dots & 0 & 0 \\
        0 & 0 & -1 & 2 & \dots & 0 & 0 \\
        \vdots & \vdots & \vdots & \vdots & \ddots & \vdots & \vdots \\
        0 & 0 & 0 & 0 & \dots & 2 & -1 \\
        0 & 0 & 0 & 0 & \dots & -1 & 2 \\
    \end{bmatrix}^{-1},
    \]
    relating the two bases $\{L_i\}, \{S_i\}$. Direct calculation yields that $\lambda_{1} = n / (n+1)$. Therefore we get
    \[
    \begin{split}
        c_{1^n}
        &= [X_{n+1}] \cdot L_1 \cdot L_2 \cdot \dots \cdot L_n 
        = \sum_k \lambda_k [X_{n+1}] \cdot S_k \cdot L_2 \cdot \dots \cdot L_n \\
        &= \lambda_1 [X_{n+1}] \cdot S_1 \cdot L_2 \cdot \dots \cdot L_n
        = \lambda_1 {n+1 \choose 1} \cdot c_{1^{0}}\cdot  c_{1^{n-1}}\\
        &= \frac{n}{n+1} \cdot (n+1) \cdot 0! \cdot (n-1)!
        = n!
    \end{split}
    \]
    
    Now we prove the other claims. First assume that $1 < k < n$ and $a_k \geq 2$. Define 
    \[
        b_j = \begin{cases}
            a_j & \quad\text{if } j \neq k\\
            a_{k} - 1 & \quad\text{if } j = k.
        \end{cases}
    \]
    We have $b_k \neq 0$, so
    \[
    \begin{split}
        0 &= [X_{n+1}] \cdot S_k \cdot L_1^{b_1} \cdot \dots \cdot L_{n}^{b_{n}} 
        = [X_{n+1}] \cdot (2 L_k - L_{k-1} - L_{k+1}) \cdot L_1^{b_1} \cdot \dots \cdot L_{n}^{b_{n}} \\
        &= 2c_{a_1, \dots, a_n} - c_{a_1, \dots, a_{k-1}+1, a_k - 1, \dots, a_n} - c_{a_1, \dots, a_k - 1, a_{k+1} +1, \dots, a_n}.
    \end{split}
    \]
    This shows that the second relation holds. Analogous argument shows that the third and fourth relation holds as well (they are just degenerate cases of the second relation).
\end{proof}

\subsubsection{The CW tensor}\label{s:cw}

Consider the algebra 
\[
    CW_n = \CC[x_0, x_1, \dots, x_{n-2}]/(\alpha_0^2 + \alpha_1^2 + \dots + \alpha_{n-2}^2)^\perp,
\]
where $\alpha_i$ is dual to $x_i$ and $(-)^\perp$ denotes the apolar ideal. The tensor associated to $CW_n$
is called the \emph{Coppersmith-Winograd tensor}. It has important applications in complexity theory 
as it is the main building block of the algorithms proving to be the best known bounds on the exponent of matrix multiplication $\omega$, see \cite{MR4262465}. In this section, we calculated the characteristic numbers of $CW_n$. \\

We start by giving an explicit description of the space spanned by $(k \times k)$-minors of the space of matrices associated to $CW_n$.
\begin{lemma}\label{l:minors_cw}
    The space of $(k \times k)$-minors of the space of matrices associated to $CW_n$ has the following basis:
    \[
    \begin{cases}
        \{x_i \colon 0 \leq i \leq n\}  & \text{if } k = 1 \\
        x_n^{k-2} \cdot (\{x_i x_j \colon 0 < i, j \leq n, i + j \neq n\} \cup  \{k x_i x_{n-i} - x_0 x_n \colon 0 < i \leq n/2\}) & \text{if }  1 < k < n \\
        x_n^{n-2} \cdot (\{x_i x_n \colon 0 < i \leq n\} \cup \{ \sum_{i=1}^{n-1} x_i x_{n-i} - x_0 x_n\}) & \text{if } k = n \\
    \end{cases}
    \]
\end{lemma}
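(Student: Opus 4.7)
My plan is to set up an explicit sparse form of the multiplication table $M$ of $CW_n$ and then compute the $(k \times k)$-minors by case analysis on $k$. I choose a basis $e_0 = 1$, $e_n$ a socle generator, and $e_1, \dots, e_{n-1}$ for the degree-one part such that the Gorenstein pairing becomes anti-diagonal: $e_i e_j = \delta_{i+j, n} e_n$ for $1 \leq i, j \leq n-1$. In the dual coordinates $x_0, \dots, x_n$ on $L^{CW_n}$, the matrix $M$ has nonzero entries only in row and column $0$ (carrying $x_0, x_1, \dots, x_n$) and at the anti-diagonal positions $M_{i, n-i} = x_n$ of the central $(n-1)\times(n-1)$ block. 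The case $k = 1$ is immediate since the entries of $M$ are exactly $\{x_0, \dots, x_n\}$. For $k = n$, I enumerate the $n \times n$ minors by deleting one row and one column: the principal case (delete row $n$ and column $n$) is computed via the Schur complement with $D = x_n J$ (where $J$ is the anti-diagonal permutation), yielding $\pm x_n^{n-2}\bigl(\sum_{i=1}^{n-1} x_i x_{n-i} - x_0 x_n\bigr)$; deleting row $n$ and column $j$ for $1 \leq j \leq n - 1$ collapses row $n - j$ to a single entry and gives $\pm x_{n-j} x_n^{n-1}$; deleting row $0$ and column $n$ gives $\pm x_n^n$; and every other deletion produces a zero row or column. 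Collecting these recovers the basis claimed for $k = n$.

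The heart of the argument is the case $1 < k < n$, where I analyze an arbitrary $(k \times k)$-minor $\det M[I, J]$ via the Leibniz expansion over bijections $\sigma : I \to J$. Sparsity forces, for any nonzero term, that each $\sigma(i)$ with $i \neq 0$ lies in $\{0, n - i\} \cap J$, so at most two of the $k$ factors in the product can be something other than $x_n$; hence every nonzero minor is divisible by $x_n^{k-2}$. The degree-two quotient is then classified by the combinatorial structure of $(I, J)$, parameterized by whether $0$ and $n$ lie in $I$ and $J$ and by which indices in $I \setminus \{0\}$ are paired with indices in $J \setminus \{0\}$ under the involution $i \mapsto n - i$. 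This splits the nonzero minors into three output types: pure quadratic monomials $x_i x_j$ with $i + j \neq n$ and $i, j \geq 1$; monomials $x_n^2$ or $x_i x_n$ coming from minors that involve the corner entries; and the \emph{complex} polynomials of the form $c \cdot x_i x_{n-i} - x_0 x_n$, which arise precisely when $I = J$ contains $0$ together with a matched pair $\{i, n-i\}$. Summing the signed contributions of the relevant $3$-cycles through $0, i, n-i$ and the involutions on any additional paired indices pins down the coefficient $c$ stated in the lemma. A final dimension count ($\binom{n+1}{2} - \lfloor n/2 \rfloor$ clean monomials plus $\lfloor n/2 \rfloor$ complex polynomials) confirms the spanning property.

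The main obstacle I anticipate is the combinatorial bookkeeping in this last case: rigorously enumerating every admissible bijection $\sigma : I \to J$ in the Leibniz expansion, tracking cycle types and signs, and verifying the precise coefficient of $x_i x_{n-i}$ in each complex basis element. Additional care is needed when the index set contains $n/2$ (which contributes via a self-paired term $x_{n/2}^2$) or several matched pairs simultaneously (which multiplies in extra involution factors and can flip the overall sign). Handling all these configurations uniformly, and ruling out any exotic polynomial outside the three listed types, is where the bulk of the work will live.
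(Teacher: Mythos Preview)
Your overall strategy—writing $M$ explicitly, computing minors by case analysis, then extracting a basis for the span—is essentially the paper's approach, and your treatment of the cases $k=1$ and $k=n$ is correct. The difficulty lies in the intermediate range $1<k<n$, where your classification of the ``complex'' minors contains a genuine gap.

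You assert that a minor involving the term $x_0 x_n$ arises precisely when $I=J$ contains $0$ together with a matched pair $\{i,n-i\}$. This is too restrictive: such minors occur whenever $0\in I\cap J$ and $J\setminus\{0\}=\{\,n-r:r\in I\setminus\{0\}\,\}$, which does not force $I=J$. For example, with $k=2$ and $n$ odd, the choice $I=\{0,1\}$, $J=\{0,n-1\}$ yields $\det M[I,J]=x_0x_n-x_1x_{n-1}$; here $I\neq J$, and in fact no principal $2\times2$ minor can contain a matched pair when $n$ is odd, so your trichotomy would report \emph{no} complex minors at all for that $(k,n)$, and the span you compute would be strictly too small. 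The paper sidesteps this by splitting cases according to whether $0$ lies in $I$ and/or in $J$ (and then analysing the rank of the lower-right block), rather than according to whether $I=J$.

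There is a second, related issue. The complex minors are not of the form $c\cdot x_i x_{n-i}-x_0x_n$ for a single index $i$: each one equals, up to sign, $x_n^{k-2}\bigl(\sum_{s=2}^{k}x_{i_s}x_{n-i_s}-x_0x_n\bigr)$, a sum over \emph{all} $k-1$ chosen row indices. Hence no Leibniz/$3$-cycle count on an individual minor will isolate the claimed basis elements; one must take the linear span of these sums over all $(k-1)$-subsets of $\{1,\dots,n-1\}$ and then choose a basis. (Carrying this span computation through in fact produces coefficient $k-1$ rather than $k$ in front of $x_i x_{n-i}$, so the displayed statement appears to contain a small typo; this does not affect how the lemma is used in the subsequent proposition, but it would frustrate any attempt to ``pin down the coefficient $c$ stated in the lemma'' by direct calculation.)
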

\begin{proof}
The space of matrices associated to $CW_n$ is
\[
\begin{bmatrix}
    x_0 & x_1 & x_2 & \dots & x_{n-2} & x_{n-1} & x_n \\
    x_1 & 0 & 0 & \dots & 0 & x_n & 0 \\
    x_2 & 0 & 0 & \dots & x_n & 0 & 0 \\
    \vdots & \vdots & \vdots & \ddots & \vdots & \vdots & \vdots \\
    x_{n-2} & 0 & x_n & \dots & 0 & 0 & 0 \\
    x_{n-1} & x_n & 0 & \dots & 0 & 0 & 0 \\
    x_n & 0 & 0 & \dots& 0 & 0 & 0 \\
\end{bmatrix}.
\]
We proceed by induction. We index rows and columns starting from 0. Minors of size $k$ are indexed by $k$-tuples of sorted indices $i_1 < \dots < i_k, j_1 < \dots < j_k$. The case $k = 1 $ is trivial, so assume that $k > 1$.

First, we consider the case $i_1 = j_1 = 0$. Take the $k \times k$-submatrix $M$ obtained by choosing rows and columns indexed by $i_1 < \dots i_k, j_1 < \dots j_k$. The first column of $M$ is $x_0, x_{i_2} \dots, x_{i_k}$ and the first row of $M$ is $x_0, x_{j_2}, \dots, x_{j_k}$. For any other indices $2 \leq s, t \leq k$, the $(i_s,j_t)$-th entry is $x_n$ if $s + t = n$ and is zero otherwise. If $M_{2 \leq s, t \leq k}$ has $k-1$ non-zero entries, that is, it has $x_n$ on the antidiagonal and zeroes everywhere else, then a direct calculation using Laplace expansion shows that $\det M = x_n^{k-2} \cdot (\sum_{s = 2}^k x_{i_s} x_{n - i_s} - x_0 x_n)$. If $M_{2 \leq s, t \leq k}$ has $k-2$ non-zero entries, that is, it becomes an $(k-2) \times (k-2)$-matrix with $x_n$ on the antidiagonal and zeroes everywhere else after crossing out one row $i_s$ and one column $j_t$, then a direct calculation shows that $\det M = x_n^{k-2}\cdot x_{i_s} x_{j_t}$. Otherwise, $M_{2 \leq s, t \leq k}$ has rank at most $k-3$, so by subadditivity of matrix rank one obtains $\operatorname{rk} M \leq 2 + (k-3) < k$, which implies that $\det M = 0$.

Now, consider the case $i_1 = 0, j_1 > 0$ (which also covers the case $i_1 > 0, j_1 = 0$ by symmetry).
We obtain a matrix $M$ whose first row is $x_{j_1}, \dots, x_{j_k}$, each other row is zero or contains exactly one non-zero entry equal to $x_n$ and each column contains at most one $x_n$. If there exists a zero row, then $\det M = 0$. Otherwise let $j_t$ be the index such that the column indexed by $x_{j_t}$ does not contain $x_n$. Taking the Laplace expansion with respect to this column yields $\det M = x_n^{k-1} \cdot x_{j_t}$.

Finally, if $i_1, j_1 > 0$, then we can clearly obtain only $x_n^k$ or zero. \\

Note that for $k < n$, then the span of all minors of the form $x_n^{k-2} \cdot (\sum_{s = 2}^k x_{i_s} x_{n - i_s} - x_0 x_n)$ has basis $x_n^{k-2} \cdot \{k x_i x_{n-i} - x_0 x_n \colon 0 < i \leq n/2\}$, and for $k = n$ the only minor of this form is $x_n^{k-2} \cdot(\sum_{i=1}^{n-1} x_i x_{n-i} - x_0 x_n)$. A direct verification shows that the other forms of minors indeed do appear, so we obtain basis as stated in the formulation of this lemma.
\end{proof}

Now we use this explicit form of minors to determine the characteristic numbers of $CW_n$. 

\begin{proposition}\label{e:cw}
    The characteristic numbers of $CW_n$ are given by:
    \[
        c_{a_1, \dots, a_n} = 
        \begin{cases}
            2^{a_2 + \dots + a_{n-1} - 1} & \text{if } a_1 = a_n = 0, \\
            2^{a_2 + \dots + a_{n-1}} & \text{if } a_1 = 0, a_n > 0 \text{ or } a_1 > 0, a_n = 0, \\
            2^{a_2 + \dots + a_{n-1} + 1} & \text{if } a_1, a_n > 0. \\
        \end{cases}
    \]
\end{proposition}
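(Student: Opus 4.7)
The plan is to compute $c_{a_1, \ldots, a_n}$ as a count of points on the affine chart $\AA^n = \{y_n = 1\} \subset \PP(L) = \PP^n$, and then to apply B\'ezout's theorem to a reduced system of equations. For generic hyperplanes $H_k \in \PP(S^2(\bigwedge^k V^*))$, Bertini's theorem guarantees that $X_L \cap H_1 \cap \cdots \cap H_n$ is a finite reduced set of points. For $n \geq 3$, one checks directly from the matrix description of $CW_n$ that every element of $L$ with $y_n = 0$ has rank at most $2$, so the hyperplane $\{y_n = 0\} \subset \PP(L)$ lies in the base locus of $\Phi_{\mathcal{Q}}|_L$; its preimage in $X_L$ is a divisor of dimension $n - 1$ which, by a dimension count, is generically disjoint from the intersection with $n$ hyperplanes. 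Hence we may work in the affine chart $\{y_n = 1\}$; the case $n = 2$ is covered by Theorem \ref{t:numbers} since $CW_2 = \CC[x_0]/(x_0^3)$.

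By Lemma \ref{l:minors_cw}, after dividing by the common factor $y_n^{k-2}$ and setting $y_n = 1$, a generic hyperplane condition in $\PP(S^2(\bigwedge^k V^*))$ pulls back to an equation of one of the following forms: a generic affine linear form in $y_0, \ldots, y_{n-1}$ if $k = 1$; an equation $P(y_1, \ldots, y_{n-1}) + c\, y_0 + \gamma = 0$ with $P$ a generic quadratic and $c$ linearly determined by the $y_i y_{n-i}$-coefficients of $P$ if $1 < k < n$; and an equation $\delta \sum_{i=1}^{n-1} y_i y_{n-i} + \ell(y_1, \ldots, y_{n-1}) - \delta y_0 + \gamma = 0$ with $\ell$ a generic linear form if $k = n$. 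The key structural observation is that the quadratic part of every generic $L_n$-equation is proportional to the single fixed form $\sum_{i=1}^{n-1} y_i y_{n-i}$, so any two such equations admit a rescaled difference that is purely linear in $y_0, \ldots, y_{n-1}$.

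We now eliminate variables step by step. The $a_1$ generic linear equations kill $a_1$ variables. If $a_1 = 0$, we use one middle equation (or an $L_n$-equation, if $a_2 + \cdots + a_{n-1} = 0$) to solve for $y_0$. If $a_n \geq 1$, the $a_n$ equations coming from $L_n$ are replaced by a single ``master'' quadratic together with $a_n - 1$ linear equations obtained as rescaled differences, and the latter eliminate $a_n - 1$ further variables. In each of the three cases of the proposition, the remaining system consists of $m$ quadratic equations in $m$ remaining variables, where $m = a_2 + \cdots + a_{n-1} - 1$ if $a_1 = a_n = 0$, $m = a_2 + \cdots + a_{n-1}$ if exactly one of $a_1, a_n$ vanishes, and $m = a_2 + \cdots + a_{n-1} + 1$ if $a_1, a_n > 0$. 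B\'ezout's theorem then yields $2^m$ intersection points, matching the stated formula.

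The main obstacle is verifying that the $m$ quadratics appearing after elimination are indeed generic, i.e., that their quadratic parts are linearly independent. The constraint linking $c$ to the $y_i y_{n-i}$-coefficients of $P$ in the middle equations, together with the proportionality of the $L_n$-quadratic parts, introduces a priori nontrivial linear dependencies; one must check that for generic choices of the original hyperplanes these dependencies do not collapse further. This reduces to a transversality statement at each elimination step, ensuring that the linear map sending original hyperplane parameters to the quadratic part of the substituted equation is surjective onto all quadratics in the remaining variables.
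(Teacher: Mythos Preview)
Your overall strategy closely matches the paper's: use Lemma~\ref{l:minors_cw} to make the hyperplane conditions explicit, eliminate $y_0$ (and further variables) using the equations that are linear or nearly linear in it, and finish with B\'ezout. The paper works projectively, invokes the Veronese interpretation for the middle factors, and uses the Gorenstein symmetry from Proposition~\ref{p:gorenstein} to reduce $a_1=0,\ a_n>0$ to the mirror case; you instead work on the affine chart $y_n=1$ and treat all cases directly. These differences are cosmetic. One small omission: your description of a generic middle equation as $P+cy_0+\gamma$ drops the free linear term in $y_1,\dots,y_{n-1}$ that is present (coming from the monomials $x_ix_n$ with $i<n$).

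The genuine gap is in your proposed final verification. You plan to check that the map from hyperplane parameters to the quadratic part of the substituted equation is \emph{surjective onto all quadratics in the remaining variables}, and then invoke B\'ezout. In the case $a_1=a_n=0$ this surjectivity is simply false whenever two or more of the $n$ equations come from the same factor $L_k$ (for instance $a_k=n$ for a single $2\le k\le n-1$). Indeed, writing $\sigma(P)$ for the sum of the $y_iy_{n-i}$-coefficients of $P$, one has $c=-\tfrac{1}{k}\sigma(P)$; after using one $L_{k}$-equation to solve for $y_0$, any other $L_{k}$-equation substitutes to a quadratic with leading part $P_s-\tfrac{c_s}{c_1}P_1$, and a direct computation gives $\sigma\bigl(P_s-\tfrac{c_s}{c_1}P_1\bigr)=(k_1-k_s)c_s=0$. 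So the substituted quadratics lie in the fixed hyperplane $\{\sigma=0\}$ of the space of quadratics and cannot be generic in your sense. What actually makes B\'ezout work is the weaker fact that this constrained linear system is \emph{basepoint-free} on the hyperplane at infinity $\PP^{n-2}$: the monomials $y_iy_j$ with $i+j\neq n$ force $y_i=0$ for all $i\neq n/2$, and then any difference $y_iy_{n-i}-y_{n/2}^2$ kills $y_{n/2}$. This is exactly the transversality check the paper carries out (via a Jacobian argument). You should replace the surjectivity claim by this basepoint-freeness statement; linear independence of the leading quadratic parts alone is also not enough, as it does not rule out common zeros at infinity.
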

\begin{proof}
	The characteristic numbers of $CW_n$ can be calculated by taking the rational map defined the minors from Lemma \ref{l:minors_cw} and calculating the multidegree of its graph. It is sufficient to consider the restriction of this map to the open subset $\{x_n \neq 0 \}$. There, it is given by
	\[
    \begin{split}
        [x_0 : \dots : x_n] \mapsto & \prod_{k=2}^{n-1} [\{x_n^{k-2}x_i x_j\} : \{x_n^{k-2}(kx_ix_{n-i} - x_0x_n)\}] \times [\{x_i x_n^{n-1}\} : x_n^{n-2}(\sum_{i=1}^{n-1} x_i x_{n-i} - x_0 x_n)] \\
        = & \prod_{k=2}^{n-1} [\{x_i x_j\} : \{kx_ix_{n-i} - x_0x_n\}] \times [\{x_i x_n\} : \sum_{i=1}^{n-1} x_i x_{n-i} - x_0 x_n],
    \end{split}
	\] 
	where the range of indices is as in Lemma \ref{l:minors_cw}.
	
	Consider the case $a_1 > 0$. Choose a general hyperplane on the domain given by a linear equation with a non-zero coefficient by $x_0$, say $x_0 = \sum_{i=1}^n a_i x_i$. After restricting to this hyperplane, the variable $x_0$ can be expressed as a linear combination of $x_1, \dots, x_n$, so the map can be described as 
    \[
        [x_1 : \dots : x_n] \mapsto \prod_{k=2}^{n-1} [\{x_i x_j\} : \{kx_ix_{n-i} - \sum_{i=1}^n a_i x_ix_n\}] \times [\{x_i x_n\} : \sum_{i=1}^{n-1} x_i x_{n-i} - \sum_{i=1}^n a_i x_i x_n].
    \]
    The linear span of $\{x_i x_j\}$ and $\{kx_ix_{n-i} - \sum_{i=1}^n a_i x_ix_n\}$ has basis $\{x_i x_j \colon 1 \leq i, j \leq n \}$
    and the linear span of $\{x_i x_n\}$ and $\sum_{i=1}^{n-1} x_i x_{n-i} - \sum_{i=1}^n a_i x_i x_n$ has basis $\{x_i x_n \colon 1 \leq i \leq n\} \cup \{ \sum_{i=1}^{n-1} x_i x_{n-i}\}$, so we can instead use the map
	\[
		[x_1 : \dots : x_n] \mapsto \prod_{k=2}^{n-1} [\{x_i x_j \colon 1 \leq i, j \leq n \}] \times [\{x_i x_n \colon 1 \leq i \leq n\}: \sum_{i=1}^{n-1} x_i x_{n-i}],
	\]
    as it differs from the original map by a linear embedding of smaller projective spaces into larger projective spaces on the codomain.
    
	Note that for the first $n-1$ factors of the codomain, the composition of this map with the projection onto this factors coincides with the second Veronese embedding $v_2$ of $\PP^{n-1}$. In particular, if $a_n = 0$, then $c_{a_1, a_2, \dots, a_{n-1}, 0}$ is the multidegree of the graph of the diagonal map $x \mapsto \prod_{k=2}^{n-1} v_2(x)$, so 
    \[
        c_{a_1, a_2, \dots, a_{n-1}, 0} = 2^{a_2 + \dots + a_{n-1}}.
    \]
    The algebra $CW_n$ is Gorenstein, so by Proposition \ref{p:gorenstein}, the characteristic numbers are symmetric and thus this also settles the case $a_1 = 0, a_n > 0$.
	
Now consider the case when $a_n > 0$ (and still $a_1 > 0$). Take $a_n$ general hypersurfaces given by linear combinations of $x_i x_n$ and $\sum_{i=1}^{n-1} x_i x_{n-i}$. We can assume that the first equation has a non-zero coefficient by $\sum_{i=1}^{n-1} x_i x_{n-i}$ and the others have coefficient zero. After restricting to the open subset $\{x_n \neq 0 \}$, the last $a_n - 1$ hypersufraces become hyperplanes given by general linear combinations of $x_1, \dots, x_n$. Thus, by B{\'e}zout's theorem, intersecting with $a_n$ general hyperplanes on the last factor contributes a factor of $2$ to the multidegree, so 
\[
    c_{a_1, \dots, a_n} = 2^{a_2 + \dots + a_{n-1} + 1}.
\]

We are left with the case $a_1 = a_n = 0$. We want to calculate the characteristic numbers of the variety coming from the rational map
\[
	[x_0: \dots : x_n] \mapsto \prod_{k=2}^{n-1} [\{x_i x_j \colon 1 \leq i, j \leq n, i + j \neq n\} : \{k x_i x_{n-i} - x_0 x_n \colon 0 < i \leq n/2\}].
\]
Choose an $a_k > 0$ and take a hypersurface given by a linear combination of $x_ix_j, kx_ix_{n-i} - x_0x_n$ with a non-zero coefficient by $x_0x_n$. After restricting to the intersection of this hypersufrace with the open subset $\{x_n \neq 0\}$, the variable $x_0$ is expressed in terms of the other variables. Take the other $n-1$ equations used to calculate the characteristic number. We can assume that none of them contains $x_0x_n$, so the coefficients of $x_ix_j$ in all conditions are general and there is one linear condition between coefficients of $x_ix_{n-i}$. The matrix of derivatives of these last $n-1$ equations is the sum of the matrix of derivates of the $x_i x_j$-parts and the matrix of derivatives of the $x_ix_{n-i}$-parts. The first matrix generically has full rank and the equations containing $x_i x_j$ are independent from the equations containing $x_i x_{n-1}$, so the sum of these two matrices generically has full rank as well, that is, these hypersufraces are transversal. All of them have degree $2$, so we conclude that
\[
	c_{a_1, \dots, a_n} = 2^{a_2 + \dots + a_{n-1} - 1}. \qedhere
\]
\end{proof}

\subsubsection{The trivial algebra}

We also calculate the characteristic numbers of the trivial algebra, in the sense of Remark \ref{r:generalization}.

\begin{proposition}
\label{e:trivial}
    The characteristic numbers of $\CC[x_0, \dots, x_n]/(x_0, \dots, x_n)^2$ are given by:
    \[
        c_{a_1, \dots, a_n} = 
        \begin{cases}
            2^{a_2} & \text{if } a_1 > 0, a_3 = \dots = a_n = 0 \\
            0 & \text{otherwise} \\
        \end{cases}
    \]
\end{proposition}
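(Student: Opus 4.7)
The plan is to identify $X_A$ with a blow-up of projective space at a single point and reduce the computation to standard intersection theory on that model.

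First, with basis $1, x_1, \dots, x_n$ of the $(n+1)$-dimensional algebra and dual coordinates $t_0, \dots, t_n$, the multiplication table is the symmetric matrix with $t_0$ at position $(0,0)$ and $t_i$ at positions $(0,i)$ and $(i,0)$ for $i > 0$, zero elsewhere. This matrix has rank at most $2$, so all its $k \times k$ minors vanish for $k \geq 3$. By Remark \ref{r:generalization}, $c_{a_1, \dots, a_n} = 0$ whenever some $a_i > 0$ with $i \geq 3$, handling the ``otherwise'' case. An elementary case analysis shows the non-trivial $2 \times 2$ minors are (up to sign) exactly $\{t_j t_l : 1 \leq j \leq l \leq n\}$. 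Hence the rational map $\varphi \colon \PP^n \dashrightarrow \PP(S^2 \bigwedge^2 A^*)$ factors, up to a linear embedding of the target, as the linear projection $[t_0 : \dots : t_n] \dashrightarrow [t_1 : \dots : t_n]$ followed by the second Veronese embedding of $\PP^{n-1}$.

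The base locus of $\varphi$ is the single point $p = [1:0:\dots:0]$, and since $\varphi$ is defined by forms vanishing to order exactly $2$ at $p$, its indeterminacy is resolved by the blow-up $\pi \colon \mathrm{Bl}_p \PP^n \to \PP^n$. I would identify $X_A$ with $\mathrm{Bl}_p \PP^n$ via this resolution, with divisor classes $L_1 = \pi^* H$ (pulled back along the projection to $\PP(L^A)$) and $L_2 = 2\pi^* H - 2 E$ (where $E$ is the exceptional divisor), since the morphism to the second factor is given by sections of $\cO(2\pi^* H - 2E)$.

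Finally, using the standard intersection relations $(\pi^* H)^n = 1$, $(\pi^* H)^k \cdot E^{n-k} = 0$ for $0 < k < n$, and $E^n = (-1)^{n-1}$ on $\mathrm{Bl}_p \PP^n$, I expand
\[
    L_1^{a_1} \cdot L_2^{a_2} = 2^{a_2} \sum_{k=0}^{a_2} \binom{a_2}{k} (-1)^k (\pi^* H)^{a_1 + a_2 - k} \cdot E^k.
\]
When $a_1 > 0$ only the $k = 0$ summand is non-zero, giving $2^{a_2}$; when $a_1 = 0$ we have $a_2 = n$ and only $k = 0$ and $k = n$ contribute, yielding $2^n \bigl(1 + (-1)^{2n-1}\bigr) = 0$. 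The main obstacle is rigorously establishing $X_A \cong \mathrm{Bl}_p \PP^n$ as schemes rather than merely birationally, but this follows from the universal property of the blow-up because the base ideal of $\varphi$ equals the square of the maximal ideal at $p$, so the line bundle $\cO(2\pi^* H - 2E)$ is globally generated by the pullbacks of the defining sections of $\varphi$, giving a morphism from the blow-up that extends $\varphi$ and agrees with the projection from $X_A$.
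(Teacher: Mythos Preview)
Your proof is correct and takes a genuinely different route from the paper's. The paper argues directly by slicing: after intersecting with one general hyperplane in the first factor (which one may take to eliminate $x_0$), the map to the second factor becomes the second Veronese of $\PP^{n-1}$; intersecting with $a_1-1$ further hyperplanes cuts this down to the second Veronese of $\PP^{a_2}$, whose degree is $2^{a_2}$. The case $a_1=0$ is dispatched separately by observing that the image of $\varphi$ has dimension $n-1$, so $n$ general hyperplanes miss it.

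Your blow-up model, by contrast, packages everything into a single intersection computation on $\mathrm{Bl}_p\PP^n$, and the case $a_1=0$ drops out of the same binomial expansion rather than requiring a side argument. The identification $X_A \cong \mathrm{Bl}_p\PP^n$ is cleaner than you present it: since $X_A$ is the closure of the graph of $\varphi$, it is exactly the blow-up of $\PP^n$ at the base ideal, which here is $\cI_p^2$, and $\mathrm{Bl}_{\cI_p^2}\PP^n \cong \mathrm{Bl}_{\cI_p}\PP^n$ because the Proj of a Rees algebra is unchanged by the Veronese. Even without this, your construction of a proper birational morphism $\mathrm{Bl}_p\PP^n \to X_A$ suffices for the intersection numbers via the projection formula, so the ``main obstacle'' you flag is not really an obstacle. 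The identification $L_2 = 2\pi^*H - 2E$ is correct because the defining sections $t_jt_l$ vanish to order exactly $2$ along $E$, and after dividing by the square of the exceptional equation they globally generate $\cO(2\pi^*H-2E)$ (on $E$ they restrict to the complete linear system $|\cO_E(2)|$).
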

\begin{proof}
    The space of matrices associated to $\CC[x_0, \dots, x_n]/(x_0, \dots, x_n)^2$ is
    \[
    \begin{bmatrix}
        x_0 & x_1 & x_2 & \dots & x_{n-1} & x_n \\
        x_1 & 0 & 0 & \dots & 0 & 0 \\
        x_2 & 0 & 0 & \dots & 0 & 0 \\
        \vdots & \vdots & \vdots & \ddots & \vdots & \vdots \\
        x_{n-1} & 0 & 0 & \dots & 0 & 0 \\
        x_{n} & 0 & 0 & \dots & 0 & 0 \\
    \end{bmatrix}. 
    \]
    The non-zero $(1 \times 1)$-minors are just the variables $x_0, x_1, \dots, x_n$, the non-zero $(2 \times 2)$-minors are $x_i x_j$ with $i, j > 0$ and for $k > 2$, all $(k \times k)$-minors are zero. If $a_k \neq 0$ for some $k > 2$ or $a_2 = n$, then $c_{a_1, \dots, a_n} = 0$ for dimensional reasons. Therefore we only need to consider the case where $a_1 + a_2 = n$ and $a_1 > 0$. Take $a_1$ general hyperplane given by linear combinations of $x_0, \dots, x_n$. We may assume that the first equation has a non-zero coefficient by $x_0$ and that the other ones do not contain $x_0$. Thus, after restricting the domain to the first hyperplane, we get a morphism which is the composition of the second Veronese embedding $v_2$ of $\PP^{n-1}$ followed by a linear embedding into a larger projective space (because the space of $(2 \times 2)$-minors is spanned by the set of all degree $2$ monomials in $x_1, \dots x_n$). After restricting to the next hyperplane, we obtain a morphism coming from $v_2$ of $\PP^{n-2}$, and so on. Thus, after intersecting with all $a_1$ hyperplanes, we are left with a morphism coming from $v_2$ of $\PP^{a_2}$. Hence, the intersection number $c_{a_1, a_2, 0, \dots, 0}$ coincides with the degree of the second Veronese embedding of $\PP^{a_2}$, which is $2^{a_2}$.
\end{proof}

\bibliographystyle{alpha}
\bibliography{characteristic-numbers-of-algebras}

\textsc{Jakub Jagiełła, Faculty of Mathematics, Informatics and Mechanics, University of Warsaw}\\
\textit{email address:} jj429138@students.mimuw.edu.pl 

\textsc{Paweł Pielasa, Faculty of Mathematics, Informatics and Mechanics, University of Warsaw}\\
\textit{email address:} p.pielasa@student.uw.edu.pl

\textsc{Anatoli Shatsila, Institute of Mathematics, Jagiellonian University in Krak\'ow, Poland}\\
\textit{email address:} anatoli.shatsila@doctoral.uj.edu.pl.

\end{document}